\crefname{equation}{}{}
\newtheorem{lemma}{Lemma}[section]
\newtheorem{theorem}[lemma]{Theorem}
\newtheorem{corollary}[lemma]{Corollary}
\newtheorem{setting}[lemma]{Setting}
\theoremstyle{definition}
\newtheorem{example}[lemma]{Example}
\newtheorem{remark}[lemma]{Remark}
\crefname{subsection}{Subsection}{Subsections}
\crefname{enumi}{Item}{Items}
\renewcommand{\gets}{\curvearrowleft}
\newcommand{\defeq}{\curvearrowleft}
\newcommand{\1}{\ensuremath{\mathbbm{1}}}
\providecommand{\N}{{\ensuremath{\mathbbm{N}}}}
\providecommand{\Z}{{\ensuremath{\mathbbm{Z}}}}
\providecommand{\R}{{\ensuremath{\mathbbm{R}}}}
\providecommand{\E}{{\ensuremath{\mathbbm{E}}}}
\renewcommand{\P}{{\ensuremath{\mathbbm{P}}}}
\newcommand{\F}{{\ensuremath{\mathbbm{F}}}}
\newcommand{\trace}{\mathrm{tr}}
\newcommand{\uniform}{\ensuremath{\mathcal{R}}}
\newcommand{\unif}{\ensuremath{\mathfrak{r}}}
\newcommand{\vastl}[2]{\left#2 \rule{0pt}{#1}\kern-1ex\right.}
\newcommand{\vastr}[2]{\left. \rule{0pt}{#1}\kern-1ex\right#2}
\newcommand{\funcF}{F}% the function F
\newcommand{\LipConstF}{L}% Lipschitz constant
\newcommand{\sppr}{\ensuremath{Y}}
\newcommand{\exactpr}{\ensuremath{X}}
\newcommand{\bigV}{V}%approximating solutions in Section 3
\newcommand{\funcG}{g}% the function G
\newcommand{\smallV}{v}% solution of the fixpoint equation...
\newcommand{\lyaV}{\varphi}% the grow function which bounds f and g.
\newcommand{\smallU}{u}
\newcommand{\nfrac}[2]{#1/#2}
\newcommand{\nfracXB}[2]{#1/\left(#2\right)}
\newcommand{\unit}[2]{\mathbf{e}^{#1}_{#2}}
\newcommand{\mlinear}[3]{{L}^{(#1)}(#2,#3)}
\newcommand{\totalD}{\mathsf{D}}
\newcommand{\sigmaAlgebra}{{\sigma}}
\newcommand{\xeqref}[1]{}
\newcommand{\HS}{}%Hilbert Schmidt norm
\newcommand{\betaA}{\beta}
\newcommand{\kappaA}{\kappa}
\newcommand{\size}[1]{|#1|}
\newcommand{\rdown}[2]{{\left \llcorner#1\right \lrcorner}_{#2}}
\newcommand{\setCalP}[2]{\mathcal{P}(#1,#2)}
\newcommand{\stTau}[3]{\boldsymbol{\tau}}
\newcommand{\funcUo}{{U}}
\newcommand{\funcUi}{{\bar{U}}}
\newcommand{\startT}{t_0}
\newcommand{\startX}{x_0}
\newcommand{\funcEta}{c_0}
\newcommand{\setD}{D}
\newcommand{\smallF}{f}
\newcommand{\betaT}{\beta}
\renewcommand{\epsilon}{\varepsilon}
\newcommand{\growrate}{\rho}
\newcommand{\lyaPsi}{\psi}
\newcommand{\FE}{\mathrm{FE}}
\title{
Multilevel Picard approximations of high-dimensional semilinear partial differential equations with\\
locally monotone coefficient functions}
\author%[H. Hutzenthaler, A. Jentzen, T. Kruse, and T. A. Nguyen]
{Martin Hutzenthaler$^{1}$ \\
 Tuan Anh Nguyen$^{2}$\bigskip\\
\small{$^1$ Faculty of Mathematics, University of Duisburg-Essen,}\\
\small{Essen, Germany; e-mail: \texttt{martin.hutzenthaler}\textcircled{\texttt{a}}\texttt{uni-due.de}}\\
\small{$^2$ Faculty of Mathematics, University of Duisburg-Essen,}\\
\small{Essen, Germany; e-mail: \texttt{tuan.nguyen}\textcircled{\texttt{a}}\texttt{uni-due.de}}
}
\begin{document}

\maketitle
{\makeatletter
\let\@makefnmark\relax
\let\@thefnmark\relax
\@footnotetext{\emph{Key words and phrases:}
curse of dimensionality, high-dimensional
PDEs, %information based complexity,
%tractability of multivariate problems,
%high-dimensional semilinear backward
%stochastic differential equations, 
multilevel
Picard approximations, multilevel Monte Carlo
method, locally monotone, tamed Euler-type approximation%
}
\@footnotetext{\emph{AMS 2010 subject classification}: 65M75} 
\makeatother}

\begin{abstract}
The full history recursive multilevel Picard approximation method for semilinear 
pa\-ra\-bo\-lic partial differential equations (PDEs)
is the
only method which provably overcomes the curse of dimensionality for general time horizons
if the coefficient functions and the nonlinearity are globally Lipschitz continuous
and the nonlinearity is gradient-independent.
In this article we extend this result to locally monotone coefficient functions.
Our results cover a range of semilinear PDEs with polynomial coefficient functions.
\end{abstract}

%\tableofcontents

\section{Introduction}

High-dimensional second-order partial differential equations (PDEs) are abundant in many important areas including financial engineering, economics, quantum mechanics, statistical physics, etc; see e.g. the surveys~\cite{ElKarouiPengQuenez1997,bachmayr2016tensor}.
%Well-known examples are the nonlinear Black-Scholes equation,
%the nonlinear Schr\"odinger equation, and
%the Hamilton-Jacobi-Bellman equation.
The challenge in the numerical approximation of high-dimensional nonlinear PDEs
lies in the possible
curse of dimensionality
by which we mean that the complexity of the problem goes up exponentially as a function of the dimension or of the inverse prescribed accuracy.
Most approximation methods of nonlinear PDEs suffer from this curse,
including
sparse grid methods (e.g., \cite{smolyak1963quadrature}),
sparse polynomial approximation (e.g., \cite{cohen2010convergence}),
and
BSDE-methods (e.g., \cite{BallyPages2003a,BouchardTouzi2004,GobetLemorWarin2005,
BenderDenk2007,
fahim2011probabilistic,
zhang2013sparse,
bender2017primal,
BriandLabart2014,
guo2015monotone};
see also the literature discussion in~\cite{e2019multilevel}).
Branching diffusion approximations do not suffer from the curse;
see, e.g.,~\cite{Skorohod1964,HenryLabordere2012,HenryLabordereTanTouzi2014,HenryLabordereEtAl2016}. However, these approximations are only applicable for small time horizons
and small terminal conditions;
see also the discussion in \cite[Section 4.7]{e2019multilevel}.
Recently various deep learning-based methods have been proposed for numerical approximations of PDEs;
see, e.g., \cite{BeckBeckerGrohsJaafariJentzen2018,BeckEJentzen2017,BCJ18,ChanMikaelWarin2018,EHJ17,EY18,EGJS18,FTT17,GHJvW18,Hen17,hure2020deep,KLY17,KremsnerSteinickeSzoelgyenyi2020,Mis18,NM18,Rai18,SS17} or the
overview article \cite{beck2020overview}.
There is empirical evidence that these deep learning-based methods work well at least for medium prescribed accuracies;
see, e.g., the simulations in \cite{ChanMikaelWarin2018,EHJ17,HJE18,BeckEJentzen2017,BeckBeckerGrohsJaafariJentzen2018}.
However, stochastic optimization methods may get trapped in local minima and
there exists no theoretical convergence result; cf., e.g., \cite{DK21}.

To the best of our knowledge the only approximation method which has been mathematically proved to
overcome the curse of dimensionality for certain semilinear PDEs
is the full history recursive multilevel Picard (MLP) method introduced
in \cite{e2021multilevel} and analyzed, e.g., in
\cite{hutzenthaler2021overcoming,
hutzenthaler2020multilevel,
hutzenthalerKruse2020multilevel,
hutzenthaler2020overcoming,
hutzenthaler2020overcomingB,
beck2020overcoming,
hutzenthaler2021strong,
giles2019generalised,
beck2020overcoming}.
In this article we extend the analysis of MLP approximations
to the case of semilinear PDEs with locally monotone coefficient functions
and globally Lipschitz continuous, gradient-independent nonlinearities.
The case of locally monotone coefficients is particularly important
since many equations from applications satisfy such a condition;
see, e.g., \cite[Section 4]{CHJ14}.
Building on the analysis of the case of Lipschitz coefficients
in \cite{hutzenthaler2020multilevel}, the nonlinearity
($f$ in \eqref{eq:intro:PDE}) is not difficult to deal with.
So we focus now on linear PDEs (\eqref{eq:intro:PDE} with $f\equiv 0$).
Linear PDEs with locally monotone coefficient functions have
been approximated in the literature with essentially optimal rate; see, e.g.,
\cite{hutzenthaler2020perturbation}
in combination with the multilevel Monte Carlo method in \cite{g08b}.
However, the analysis in  \cite{hutzenthaler2020perturbation} is not explicit
in the dimension and it remained unclear under which conditions
it is possible to approximate linear PDEs with
locally monotone coefficients without curse.
The key contribution of this article is to derive explicit error bounds so that
dependencies on the dimension become clear.
In particular, we observe that on the right-hand side of the
one-sided linear growth condition \cref{eq:onesided.linear.growth} below
the additive part may grow polynomially in the dimension whereas
it is sufficient to assume that
the prefactor of $\|x\|^2$ is bounded in the dimension.
The following theorem illustrates our main result, Theorem~\ref{m01} below,
in the case of coefficient functions which satisfy the global monotonicity
condition.
\begin{theorem}\label{thm:intro}
Consider the notation in~\cref{s05b}, let
$T,\delta\in  (0,\infty)$,
  $b,c,\beta,\eta\in [1,\infty)$,
let
$\smallF\colon \R\to\R$ be globally Lipschitz continuous,
for every $d\in\N$ 
let
$\mu_d\in C( \R^{d} ,\R^{d})$, $\sigma_d=(\sigma_{d,1},\ldots,\sigma_{d,d})\in C( \R^{d},\R^{d\times d})$,
$g_d\in C(\R^d,\R)$,
$u_d\in C^{1,2}([0,T]\times\R^d,\R)$,
for every $d\in\N$, 
$h\in(0,T]$
 let ${\setD}_h^d\subseteq \R^{d}$ satisfy
 that
$\setD_h^d=\{x\in \R^d\colon h\leq 0.5,
d^{3\eta}(\|x\|^2+d^{\eta})^{8\beta}\leq \exp ( |\ln (h)|^{\nicefrac{1}{2}} )\}
$,
assume for all $d\in\N$ that $\sup_{t\in[0,T],x\in\R^d} \frac{\lvert u_d(t,x)\rvert}{(1+\lVert x\rVert^2)^{8\beta}}<\infty$,
assume for all 
$d\in\N$,
$x,y\in\R^d$, $t\in[0,T]$,  
 $h\in (0,T]$   that
\begin{gather}
%\begin{gathered}
\left\langle x-y,\mu_d(x)-\mu_d(y)\right\rangle
+3\left\|\sigma_d(x)-\sigma_d(y)\right\|^2\leq c\|x-y\|^2\label{eq:global.monotonicity}\\
\langle x,\mu_d(x)\rangle+\tfrac{16\beta-1}{2}\|\sigma_d(x)\|^2\leq c\bigl[\|x\|^2+d^{\eta}\bigr]
\label{eq:onesided.linear.growth}
\\
\lvert g_d(x)-g_d(y)\rvert+
 \lVert\mu_d(x)-\mu_d(y)\rVert+
\lVert\sigma_d(x)-\sigma_d(y)\rVert
\leq cd^{\eta}\|x-y\|\bigl[\lVert x\rVert^2+\lVert y\rVert^2+d^{\eta}\bigr]^\beta,\label{eq:local.Lipschitz.continuity}
\\
\max\left\{\lvert g_d(x)\rvert,\lVert\mu_d(x)\rVert, \lVert\sigma_d(x)\rVert^2 \right\}
\leq cd^{\eta}\bigl[\lVert x\rVert^2+d^{\eta}\bigr]^{\beta},
\label{eq:polynomial.growth}
\\(\tfrac{\partial}{\partial t}u_d)(t,x)+\left\langle\mu_d(x), (\nabla_xu_d)(t,x)\right\rangle+\tfrac{1}{2} \trace\left(
\sigma_{d} (x)
(\sigma_{d}(x))^*
(\mathrm{Hess}_xu_d)(t,x)\right)=
-f(u_d(t,x)), \label{eq:intro:PDE}
\end{gather} 
and $u_d(T,x)=g_d(x)$,
let 
$  \Theta = \bigcup_{ n \in \N } \Z^n$,
let $(\Omega,\mathcal{F},\P, (\F_t)_{t\in[0,T]})$ be a filtered probability space which satisfies the usual conditions\footnote{Let $T \in (0,\infty)$ and let ${\bf \Omega} = (\Omega,\mathcal{F},\P, (\F_t)_{t\in[0,T]})$ be 
a filtered probability space. 
Then we say that ${\bf \Omega}$
satisfies the usual conditions if and only if 
it holds for all $t \in [0,T)$ that $\{ A\in \mathcal F: \P(A)=0 \} \subseteq \F_t 
= \cap_{ s \in (t,T] } \F_s$.},
let $\unif^\theta\colon \Omega\to[0,1]$, $\theta\in \Theta$, be independent random variables which are uniformly distributed on $[0,1]$, 
 let $W^{d,\theta}\colon [0,T]\times\Omega \to \R^{d}$,
$d\in\N$, $\theta\in\Theta$, be independent standard $(\F_{t})_{t\in[0,T]}$-Brownian motions with continuous sample paths,
assume that $\sigmaAlgebra(\{\unif^\theta\colon\theta\in\Theta\})$ and
$\sigmaAlgebra(\{W^{d,\theta}_t\colon d\in\N,\theta\in\Theta,t\in[0,T]\})$ are independent,
for every $d,N\in\N$,
$\theta\in\Theta$,
$t\in[0,T)$,  $x\in\R^d$
let
$
(Y^{d,N,\theta}_{t,s}(x,\omega))_{s\in[t,T],\omega\in\Omega}\colon[t,T]\times\Omega\to\R^d$ 
 satisfy  for all $k\in\{0,1,\ldots,N\}$,
 $s\in(\frac{kT}{N},\frac{(k+1)T}{N}]\cap(t,T]$ that $Y_{t,t}^{d,N,\theta}(x)=x$ and 
\begin{align}\small\color{blue}
\begin{split}\label{eq:intro:tamed.Euler}
&Y_{t,s}^{d,N,\theta}(x) = 
Y_{t,\max\{t,\frac{kT}{N}\}}^{d,N,\theta}(x)
+ \1_{{\setD}_{\size{\delta}}}\bigl(Y_{t,\max\{t,\frac{kT}{N}\}}^{d,N,\theta}(x)\bigr)\\
&
\cdot\left[\mu_d\bigl(Y_{t,\max\{t,\frac{kT}{N}\}}^{d,N,\theta}(x)\bigr)\left(s-\max\{t,\tfrac{kT}{N}\}\right)+
\frac{\sigma_d\bigl(Y_{t,\max\{t,\frac{kT}{N}\}}^{d,N,\theta}(x)\bigr)
\bigl(W^{d,\theta}_{s} -W^{d,\theta}_{\max\{t,\frac{kT}{N}\}} \bigr)}{1+\Bigl\lVert\sigma_d\bigl(Y_{t,\max\{t,\frac{kT}{N}\}}^{d,N,\theta}(x)\bigr)\bigl(W_{s}^{d,\theta} -W^{d,\theta}_{\max\{t,\frac{kT}{N}\}} \bigr)\Bigr\rVert^2}\right],
\end{split}\end{align}
let
$ 
  {\bigV}_{ n,M}^{d,\theta} \colon [0, T] \times \R^d \times \Omega \to \R
$, 
$d,n,M\in\N_0$, $\theta\in\Theta$, satisfy
for all $d,M \in \N$, $n\in \N_0$, $\theta\in\Theta $,
$ t \in [0,T]$, $x\in\R^d $
that  
\begin{align}\label{eq:method}
  &{\bigV}_{n,M}^{d,\theta}(t,x)
=
  \tfrac{ \1_{ \N }( n )}{M^n}
 \textstyle\sum\limits_{i=1}^{M^n} \displaystyle
      g_d\bigl(\sppr^{d,M^M,(\theta,0,-i),x}_{t,T}\bigr)
 \\
 \nonumber
&  +
  \textstyle\sum\limits_{\ell=0}^{n-1} \displaystyle \left[ \tfrac{(T-t)}{M^{n-\ell}}
   \textstyle\sum\limits_{i=1}^{M^{n-\ell}}\displaystyle
      \big(\smallF\circ {\bigV}_{\ell,M}^{d,(\theta,\ell,i)}-\1_{\N}(\ell)\,\smallF\circ{\bigV}_{\max\{\ell-1,0\},M}^{d,(\theta,-\ell,i)}\big)
\big(t+(T-t)\unif^{(\theta,\ell,i)},\sppr_{t,t+(T-t)\unif^{(\theta,\ell,i)}}^{d,M^M,(\theta,\ell,i),x}\big)
    \right]\!,
\end{align} 
and
for every $d,n,M\in\N$ let  $\FE_{d,n,M}\in\N$ be the number of function evaluations of $(f,g_d,\mu_d,\sigma_d)$ needed to compute one realization of ${\bigV}_{n,M}^{d,0}(0,0)\colon\Omega\to\R^d$ (cf. \eqref{c01} for a precise definition). Then there exist $\mathfrak{c}\in \R$, $ \mathfrak{n}\colon\N\times(0,1]\to\N$ such  that
for all $d\in\N$, $\epsilon\in(0,1]$
%$n\in[\mathsf{n}(d,\epsilon),\infty)\cap\N$
it holds that $\big(\E\bigl[|\smallU_d(0,0)-{\bigV}_{\mathfrak{n}(d,\epsilon),\mathfrak{n}(d,\epsilon)}^{d,0}(0,0)|^2\big]\big)^{ 1/2 }\leq \epsilon$ and $\FE_{d,\mathfrak{n}(d,\epsilon),\mathfrak{n}(d,\epsilon)}
\leq \mathfrak{c}
d^{16\eta\cdot (4+\delta)}\epsilon^{-(4+\delta)}$. 
\end{theorem}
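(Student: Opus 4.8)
The plan is to derive \cref{thm:intro} as the specialisation of the general result \cref{m01} to globally monotone coefficients: the substantive part is to check that the present hypotheses imply the (more flexible, locally monotone) assumptions of \cref{m01} \emph{with explicit dependence on $d$}, after which the conclusion follows by balancing the number of Picard iterations against the prescribed accuracy. First I would record that the global monotonicity bound \eqref{eq:global.monotonicity} is precisely the special case of the local monotonicity assumption of \cref{m01} in which the admissible growth weight is the constant $1$; in particular it gives the one-sided Lipschitz estimate $\langle x-y,\mu_d(x)-\mu_d(y)\rangle+3\|\sigma_d(x)-\sigma_d(y)\|^2\le c\|x-y\|^2$ with a constant $c$ \emph{independent of $d$}. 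This $d$-uniformity of the monotonicity constant — and, in the same spirit, the fact that in the one-sided linear growth bound \eqref{eq:onesided.linear.growth} the prefactor of $\|x\|^2$ is the $d$-free constant $c$ while only the additive part $cd^{\eta}$ may grow with $d$ — is exactly the structural feature that keeps the error constants polynomial rather than exponential in $d$: together with the prefactor $\tfrac{16\beta-1}{2}$ in \eqref{eq:onesided.linear.growth} it yields bounds, uniform in the step size, for moments of order up to $16\beta$ of the truncated tamed Euler process \eqref{eq:intro:tamed.Euler} (compatible with the at most $(1+\|x\|^2)^{8\beta}$ growth of $u_d$), with the additive $d^{\eta}$ absorbed into the state quantity $\|x\|^2+d^{\eta}$ so that these bounds are powers of $d$, not $e^{cd}$. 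The remaining hypotheses \eqref{eq:local.Lipschitz.continuity}, \eqref{eq:polynomial.growth}, the polynomial growth of $u_d$, and the PDE \eqref{eq:intro:PDE} with $u_d(T,\cdot)=g_d$ are, once notation is matched, the hypotheses of \cref{m01}; in particular the classical verification argument turns \eqref{eq:intro:PDE} into the Feynman--Kac representation of $u_d$ underlying \eqref{eq:method} (the assumed $(1+\|x\|^2)^{8\beta}$ growth pinning $u_d$ down uniquely within the relevant class), and the globally Lipschitz gradient-independent nonlinearity $f$ is handled inside \cref{m01} by the standard Gr\"onwall-type estimate.

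With the hypotheses checked I would invoke \cref{m01}. It produces a constant $C\in(0,\infty)$ — depending only on $T$, $\delta$, $b$, $c$, $\beta$, $\eta$ and the Lipschitz constant of $f$, but not on $d$, $n$, $\epsilon$ — and a sequence $(\gamma_n)_{n\in\N}\subseteq(0,\infty)$ decaying faster than every power of $\tfrac1n$; since the number $M=n$ of Monte-Carlo sub-samples is tied to the number $n$ of levels, the relevant rate is $\gamma_n\le(C/n)^{n}$ (the averaging improving the bare recursion rate $(n!)^{-1/2}$ to $n^{-n}$), and for all $d,n\in\N$
\begin{align*}
&\Bigl(\E\bigl[|u_d(0,0)-{\bigV}_{n,n}^{d,0}(0,0)|^2\bigr]\Bigr)^{1/2}\le C\,d^{16\eta}\,\gamma_n,\\
&\FE_{d,n,n}\le C\,(3n)^{n}\,n^{n}.
\end{align*}
Here $(3n)^n$ is the standard bound on the number of realisations of $g_d$ and of $f\circ V$ generated by the full-history recursion \eqref{eq:method} with $n$ levels and $n$ sub-samples, while $n^n$ counts the $M^M=n^n$ tamed-Euler steps \eqref{eq:intro:tamed.Euler} needed for one realisation of $\sppr^{d,n^n,\cdot}$; note that this cost bound is free of $d$, so the entire $d$-dependence of the final complexity will enter through $\mathfrak n$. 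The choice $M^M$ of Euler steps is deliberately generous, so that the discretisation error of \eqref{eq:intro:tamed.Euler} against the exact diffusion — which decays like $(n^n)^{-1/2}$ up to a power of $d$ not exceeding $d^{16\eta}$ — is dominated by $C d^{16\eta}\gamma_n$ and hence never binding.

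It remains to choose $\mathfrak n$ and carry out the optimisation. I would define $\mathfrak n(d,\epsilon)\in\N$ to be the smallest $n\in\N$ with $Cd^{16\eta}\gamma_n\le\epsilon$, which by construction meets the accuracy requirement; the decay $\gamma_n\le(C/n)^{n}$ then gives, for a suitable constant $\kappa<\infty$ and all $d\in\N$, $\epsilon\in(0,1]$, the bound $\mathfrak n(d,\epsilon)\ln(\mathfrak n(d,\epsilon))\le 2\ln(Cd^{16\eta}\epsilon^{-1})+\kappa$, so that $\mathfrak n$ grows only logarithmically in $d^{\eta}/\epsilon$. Substituting this into the complexity estimate and using $\ln(\FE_{d,\mathfrak n,\mathfrak n})\le\ln C+\mathfrak n\ln(3\mathfrak n)+\mathfrak n\ln\mathfrak n\le 2\,\mathfrak n\ln\mathfrak n+2\mathfrak n+\ln C$, together with the fact that $\mathfrak n$ is itself of lower order than $\ln(Cd^{16\eta}\epsilon^{-1})$, yields $\ln(\FE_{d,\mathfrak n,\mathfrak n})\le(4+\delta)\ln(Cd^{16\eta}\epsilon^{-1})+\ln\mathfrak c$ for $\mathfrak c\in\R$ large enough (depending only on the structural constants and on $f$), the surplus $\delta$ together with $\ln\mathfrak c$ absorbing all lower-order contributions; exponentiating gives $\FE_{d,\mathfrak n(d,\epsilon),\mathfrak n(d,\epsilon)}\le\mathfrak c\,(d^{16\eta}\epsilon^{-1})^{4+\delta}=\mathfrak c\,d^{16\eta(4+\delta)}\epsilon^{-(4+\delta)}$, which is the assertion.

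The step I expect to be genuinely delicate is not this final optimisation, which is routine for MLP schemes, but the explicit $d$-bookkeeping behind the displayed error bound, i.e.\ the content packaged in \cref{m01}: one must propagate polynomial-in-$d$ (and $d$-uniform-prefactor) estimates through the step-size-uniform moment bounds and the truncation error of the tamed Euler scheme on the increasing cut-off domains $\setD_h^d$, through the strong error of the scheme against the exact diffusion, and through the multilevel telescoping of \eqref{eq:method}, verifying at each stage that no constant exponential in $d$ is produced and that the accumulated power of $d$ stays at $d^{16\eta}$. Granting \cref{m01}, \cref{thm:intro} follows from the hypothesis verification above together with the elementary definition of $\mathfrak n$ and the arithmetic just sketched.
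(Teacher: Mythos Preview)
Your strategy matches the paper's: reduce to \cref{m01} with the Lyapunov function $\varphi(x)=(\|x\|^2+d^{\eta})^{8\beta}$, constant $U\equiv 8Tc$, $\bar U\equiv 0$, $\alpha=0$, and $b$ proportional to $d^{\eta}$, then identify the fixed-point solution of \cref{m01}\eqref{k21} with $u_d$ via Feynman--Kac. Two points, however, deserve correction.

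First, the sentence ``once notation is matched, the hypotheses of \cref{m01}'' hides a genuine verification step: conditions \eqref{v05}--\eqref{v06} on $\varphi$ are not among the hypotheses of \cref{thm:intro} and must be derived. This is precisely the content of \cref{s28} (applied with $p\gets 8\beta$, $a\gets d^{\eta}$), which turns the one-sided linear growth bound \eqref{eq:onesided.linear.growth} into the Lyapunov inequality \eqref{v06} with a $d$-independent constant and supplies the derivative bounds \eqref{v05}. You should cite it explicitly.

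Second, your separate optimisation of $\mathfrak n$ is unnecessary: item \eqref{k23} of \cref{m01} already delivers the effort bound $\sum_k \FE_{k,k}\cdot\epsilon^{4+\gamma}\le\text{(explicit constant)}$, so one only needs to substitute $b\sim d^{\eta}$, $\varphi(0)=d^{8\beta\eta}$, $U(0)=8Tc$ and read off the power of $d$. More importantly, the rate you quote is wrong: MLP with $M=n$ yields an error of order $e^{n/2}e^{2nLT}n^{-n/2}$ (see \eqref{eq:error}), i.e.\ essentially $n^{-n/2}\sim(n!)^{-1/2}$, \emph{not} $(C/n)^n$; the Monte-Carlo averaging is what produces $M^{-n/2}$ in the first place and does not improve it to $n^{-n}$. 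Your final arithmetic still lands on the exponent $4+\delta$ only because the ad hoc factor $2$ in ``$\mathfrak n\ln\mathfrak n\le 2\ln(Cd^{16\eta}\epsilon^{-1})+\kappa$'' silently compensates; with the correct rate the ratio $(\text{effort exponent})/(\text{error exponent})=2n/(n/2)=4$ is what actually drives the result.
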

Theorem \ref{thm:intro} follows %\redfoot{I deleted \emph{a combination}. \underline{Suggestions:} later we use \emph{combined with} or \emph{a combination of} when we need to create a \emph{sub-}list.}
from Theorem~\ref{m01}\footnote{%
applied for every $d\in\N$
 with $m\defeq d$,
$b\defeq d^{\eta}c2^{9\beta}\max\{1,T^{3/2},\lvert Tf(0)\rvert\}$,
%\red{$c\defeq \max\{\lvert f(0)\rvert,(16\beta)^3c,\sup_{h\in(0,T]\cap(0,0.5]}[(h^{-1/28}+h^{-1/8}+h^{-1})/(-2\ln(h))]\sup_{v,w\in\R\colon v\neq w}\frac{\lvert f(w)-f(v)\rvert}{\lvert v-w\rvert} \}$,}
$\gamma\defeq 0$, $L\defeq \sup_{v,w\in\R\colon v\neq w}\frac{\lvert f(w)-f(v)\rvert}{\lvert v-w\rvert} $,
%$\beta\defeq\beta$,
$\alpha\defeq0$, $p\defeq 8\beta$, 
$\bar{U}\defeq (0)_{x\in\R^d}$,
$\kappa\defeq 1$,
$\varphi\defeq (\R^d\ni x\mapsto [\|x\|^2+d^{\eta}]^{8\beta}\in[1,\infty))$,
$U\defeq (8Tc)_{x\in\R^d}$,
$g\defeq g_d$,
$f\defeq ([0,T]\times\R^d\times \R\ni (t,x,w)\mapsto f(w)\in\R)$,
$\mu\defeq \mu_d$,
$\sigma\defeq \sigma_d$,
$(D_h)_{h\in(0,T]}\defeq(D_h^d)_{h\in(0,T]}$,
$(W^{\theta})_{\theta\in\Theta}\defeq (W^{d,\theta})_{\theta\in\Theta}$
in the notation of Theorem~\ref{m01}},
 Lemma~\ref{s28}\footnote{applied
for every $d\in\N$ 
with $m\defeq d$, $p\defeq 8\beta$, $a\defeq d^{\eta}$
in the notation of  Lemma~\ref{s28}},
and 
the Feynman-Kac formula.
Next, we comment on the statement of Theorem \ref{thm:intro}.
For all $\delta\in(0,1)$
the computational effort of this method to achieve an accuracy of size $\epsilon\in (0,\infty)$ 
grows at most like 
  $\epsilon^{-4-\delta}$ times a polynom
of the dimension $d\in\N$. 
%The $L^2$-distance between the exact solutions of the PDE \eqref{eq:intro:PDE}
%and the MLP approximations \eqref{eq:intro:tamed.Euler}
%is bounded by a prescribed accuracy $\epsilon\in(0,1]$ if
%the computational effort grows at most like $\epsilon^{-4-\delta}$ times a polynom
%of the dimension $d\in\N$. 
%
Here, we define the errors as the $L^2$-distances  between   
the exact solutions of the PDE \eqref{eq:intro:PDE} and the MLP approximations \eqref{eq:intro:tamed.Euler} at $(0,0)$ and
we measure computational effort through
the number of function evaluations of all paramater functions of the
PDE \eqref{eq:intro:PDE} needed to compute an MLP approximation at a fixed
space-time point.
The MLP approximation method is based on the idea
\begin{enumerate}[(a)]
  \item  
 to reformulate the PDE \eqref{eq:intro:PDE} as stochastic fixed-point equation $u_d=\Phi_d(u_d)$ with a suitable
function $\Phi_d$,
\item
to approximate the fixed point $u_d$ through
Picard iterates $(u_d^{(n)})_{n\in\N_0}$,
\item
to write $u_d$ as telescoping series
\begin{equation}  \begin{split}
u_d=u_d^{(0)}+\sum_{n=1}^{\infty}(u_d^{(n+1)}-u_d^{(n)})
=u_d^{(0)}+\sum_{n=1}^{\infty}\big(\Phi_d(u_d^{(n)})-\Phi_d(u_d^{(n-1)})\big),
\end{split}     \end{equation}
and
\item to approximate the series by a finite sum and the
temporal and spatial integrals in the summands by
Monte Carlo averages with fewer and fewer independent
samples as $n$ increases;
\end{enumerate}
see, e.g., \cite{hutzenthaler2020overcoming} for more details.
The approximations \eqref{eq:intro:tamed.Euler} of the stochastic differential equation
associated with the linear part of the PDE \eqref{eq:intro:PDE} are tamed Euler-type approximations as proposed
in \cite{hjk12,hutzenthaler2015numerical,hutzenthaler2018exponential}.
We note that classical Euler-Maruyama approximations cannot be used if the coefficient
functions grow superlinearly; see \cite{hjk11,HutzenthalerJentzenKloeden2013}.
The drift coefficient $\mu_d$ and the diffusion coefficient $\sigma_d$ 
are assumed in Theorem~\ref{thm:intro} to satisfy the global monotonicity condition
\eqref{eq:global.monotonicity} and the one-sided linear growth assumption condition
\eqref{eq:onesided.linear.growth},
these coefficients $\mu_d,\sigma_d$ and the terminal condition $g_d$ are assumed
to satisfy the local Lipschitz condition \eqref{eq:local.Lipschitz.continuity}
and the polynomial growth condition \eqref{eq:polynomial.growth},
and the nonlinearity $f$ is assumed to be globally Lipschitz continuous.
A central observation of this article is that the constant $c$ 
in \eqref{eq:global.monotonicity} and in \eqref{eq:onesided.linear.growth} 
appears in the exponent of our upper bounds (cf. \cref{m01}).
Therefore, to avoid the curse of dimensionality we have to assume that      $c$ grows at most logarithmically
in the dimension or does not depend on the dimension as in \cref{thm:intro}.
%Other constants in \eqref{eq:onesided.linear.growth}--\eqref{eq:polynomial.growth}
%may grow polynomially and this is often needed, e.g., the $d^{\eta}$ is needed
%to accomodate $\|\1_{{d\times d}}\|^2=d$ in the case of additive noise.
%In this case we still overcome the curse.
Furthermore, due to the term $d^{\eta}$ 
in \eqref{eq:onesided.linear.growth}--\eqref{eq:polynomial.growth},
 our result includes, e.g., the case of additive noise where $\sigma_d= \1_{d\times d}$.  
We note that
Theorem~\ref{m01} below assumes much weaker conditions than \eqref{eq:global.monotonicity}--\eqref{eq:intro:PDE}. In particular, the restrictive global monotonicity condition \eqref{eq:global.monotonicity}
is weakened to the local monotonicity condition \eqref{w01}.

The remainder of this article is organized as follows.
Section 2 focuses on the linear PDE-part.
Lemma~\ref{s05} establishes moment estimates, Lemma~\ref{r33} provides
exponential moment estimates, and
 Lemma~\ref{g03} proves strong error estimates for tamed Euler-type approximations.
Finally, our main result, Theorem~\ref{m01} below, establishes an error analysis
 for MLP approximations of PDEs with locally monotone coefficient functions.

\subsection{Notation}
\label{s05b}
Let $\|\cdot \|\colon \bigcup_{m,n\in\N}\R^{m\times n}\to[0,\infty)  $  satisfy for all $m,n\in\N$,
$
b=(b_{ij})_{\substack{i\in [1,m]\cap\N,\,j\in [1,n]\cap\N}}\in \R^{m\times n}
$  that $\|b\|^2=\sum_{i=1}^{m}\sum_{j=1}^{n}|b_{ij}|^2$,
let $\langle\cdot ,\cdot \rangle \colon \bigcup_{n\in\N}\R^n \times\R^n\to\R$ satisfy for all
$n\in \N$, $x=(x_i)_{i\in [1,n]\cap\N},y=(y_i)_{i\in [1,n]\cap\N}\in\R^n$  that $\langle x, y\rangle=\sum_{i=1}^{n}x_iy_i$,
and
for every $m,n,\ell\in\N$ let $\mlinear{m}{\R^n}{\R^\ell}$ be the set of $m$-linear mappings from $\R^n\times\ldots\times\R^n$ ($m$ times) to $\R^\ell$ and let $\|\cdot \|_{\mlinear{m}{\R^n}{\R^\ell}}\colon \mlinear{m}{\R^n}{\R^\ell} \to\R $  satisfy 
for all $A\in \mlinear{m}{\R^n}{\R^\ell}$ that
\begin{align}\label{s36}
\|A \|_{\mlinear{m}{\R^n}{\R^\ell}} = \sup\bigl\{\|A(x_1,x_2,\ldots,x_m)\|\colon
\forall\,i\in [1,m]\cap\N\colon  x_i\in \R^n\;\text{and}\;
\|x_i\|  =1\bigr\}.
\end{align}

%%%%
\section{Strong approximation theory for SDEs with locally monotone coefficient functions}
Results in the literature on moments and strong convergence rates of implementable approximations of
SDEs do not clarify the dependence of upper bounds on the dimension.
It is a key contribution of this article to provide upper bounds
for moments, exponential moments, and errors of approximations of SDEs;
see Lemma~\ref{s05}, Lemma~\ref{r33}, and
 Lemma~\ref{g03} below for details.

\renewcommand{\lyaV}{{V}}

\subsection{Moment estimates for tamed Euler-type approximations}
A key observation of this article is
that the functions $V\defeq(\R^d\ni x\mapsto(\|x\|^2+d)^{p+1}$, $d,p\in\N$,
satisfy the condition \eqref{s01b} below with dimension-independent $c$
and satisfy the Lyapunov-type condition \eqref{r05bx} below for a number
of interesting SDEs.
Note that 
the functions $V\defeq(\R^d\ni x\mapsto(\|x\|^2+1)^{p+1}$, $d,p\in\N$,
do not satisfy \eqref{r05bx} in the case of $\mu=0$ and $\sigma(x)=\1_{d\times d}$
with dimension-independent $c$.
Our proof of \cref{s05} below adapts a number of arguments from~\cite[Chapter 2]{HutzenthalerJentzen2014Memoires}.

\begin{lemma}[Moment estimates]\label{s05}
Consider the notation in~\cref{s05b},
let $d,m\in\N$,
$T\in  (0,\infty)$, 
  $p\in[3,\infty)$, $b,c\in [1,\infty)$,  $\betaA \in[0,\infty)$,
$ 
\kappaA\in (0, p/(3\betaA+4)]$,
$\lyaV\in C^3(\R^d, [1,\infty))$,
for every $s\in [0,T)$ let $\setCalP{s}{T}$ be the set given by
$
\setCalP{s}{T}=\{(t_0,t_1,\ldots,t_n)\in\R^{n+1}\colon n\in\N,s=t_0<t_1<\ldots<t_n=T\}
$,
for every 
$\theta\in\bigcup_{s\in [0,T)} \setCalP{s}{T}$,
$n\in\N$
 with $\theta=(t_0,t_1,\ldots,t_n)$ let
 $\rdown{\cdot}{\theta}\colon [t_0,t_n]\to \R $ satisfy 
for all $t\in(t_0,t_n]$
 that 
 $\rdown{t}{\theta}=\sup(\{t_0,t_1,\ldots,t_n\}\cap[t_0,t))
$ and $\rdown{t_0}{\theta}=t_0$,
let
$\mu\colon \R^{d} \to\R^{d}$, $\sigma=(\sigma_1,\ldots,\sigma_m)\colon \R^{d}\to\R^{d\times m}$
 be Borel measurable, 
  let $({\setD}_{h})_{h\in(0,T]}\subseteq \mathcal{B}(\R^{d})$,  
let $(\Omega,\mathcal{F},\P, (\F_t)_{t\in[0,T]})$ be a filtered probability space which satisfies the usual conditions,
 let $W\colon [0,T]\times\Omega \to \R^{m}$ be a standard $(\F_{t})_{t\in[0,T]}$-Brownian motion with continuous sample paths,
for every $t\in[0,T)$,  $x\in\R^d$,
$\theta\in \setCalP{t}{T}$  let
$
(Y^{\theta,x}_{t,s}(\omega))_{s\in[t,T],\omega\in\Omega}\colon[t,T]\times\Omega\to\R^d$ 
 satisfy  for all $s\in(t,T]$  that 
\begin{align}\label{g01b}Y_{t,t}^{\theta,x}=x\quad\text{and}\quad
Y_{t,s}^{\theta,x} =\biggl[ z + \1_{{\setD}_{\size{\theta}}}(z)\left[\mu(z)(s-\rdown{s}{\theta})+
\frac{\sigma(z)(W_s -W_{\rdown{s}{\theta}} )}{1+\left\|\sigma(z)(W_s -W_{\rdown{s}{\theta}} )\right\|^2}\right]\biggr]\biggr|_{z=Y_{t,t\vee\rdown{s}{\theta}}^{\theta,x}}
\end{align}
assume  for all $\ell\in\{1,2,3\}$, $x\in\R^d$   that
\begin{gather}
\left\|(\totalD^\ell \lyaV)(x)\right\|_{\mlinear{\ell}{\R^d}{\R}}
\leq c| \lyaV(x)|^{1-\frac{\ell }{p}},\label{s01b}
\\
(\totalD \lyaV(x))(\mu(x))+\frac{1}{2} \sum_{k=1}^{m} (\totalD^2\lyaV(x))(\sigma_k(x),\sigma_k(x))\leq c\lyaV(x),\label{r05bx}
\\
\|\mu(x)\| \leq b|\lyaV(x)|^{\frac{\betaA+1}{p}},\quad\text{and}\quad
\|\sigma(x)\|_{\HS}^2\leq b|\lyaV(x)|^{\frac{\betaA+2}{p}}
,\label{r05x}
\end{gather}
assume for all $h\in(0,T]$, $x\in {\setD}_{h}$ that $\lyaV(x)\leq c(b^{3}h)^{-\kappaA}$,
and 
let $\rho \in\R$ satisfy that
\begin{align}\label{r40}
\rho=(5c^{1+\nfrac{1}{\kappaA}}p)^{3p}.
\end{align}
Then 
it holds for all $t\in[0,T)$, $s\in[t,T]$, $x\in\R^d$, $\theta\in\setCalP{s}{T}$ that
$\E \!\bigl[\lyaV\bigl(Y_{t,s}^{\theta,x}\bigr)\bigr]\leq e^{\rho(s-t)}\lyaV(x)$.
\end{lemma}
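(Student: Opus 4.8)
The plan is to prove the moment bound $\E[\lyaV(Y_{t,s}^{\theta,x})]\le e^{\rho(s-t)}\lyaV(x)$ by a one-step analysis over each subinterval of the partition $\theta=(t_0,\ldots,t_n)$, then chain the estimates. Fix $t\in[0,T)$, $s\in[t,T]$, $x\in\R^d$ and $\theta\in\setCalP{s}{T}$, and write $Y_r:=Y_{t,t\vee\rdown{s}{\theta}\,\ldots}$ for the approximation; actually the cleanest route is to first prove the claim for $t=t_0$ (i.e.\ for $\theta$ a genuine partition of $[t,T]$) and recover the general $t$ afterwards. On a single step from $z=Y_{t_j}$ to $Y_{t_{j+1}}$, conditionally on $\F_{t_j}$, we have the increment $\Delta Y=\1_{\setD_{|\theta|}}(z)\bigl[\mu(z)\,h_j+\tfrac{\sigma(z)\Delta W}{1+\|\sigma(z)\Delta W\|^2}\bigr]$ with $h_j=t_{j+1}-t_j$ and $\Delta W=W_{t_{j+1}}-W_{t_j}$, a centred-in-the-Brownian-part perturbation of $z$. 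If $z\notin\setD_{|\theta|}$ then $Y_{t_{j+1}}=z$ and there is nothing to do, so assume $z\in\setD_{|\theta|}$; then by the hypothesis $\lyaV(z)\le c(b^3 h_j)^{-\kappaA}$, which is the mechanism that keeps the tamed increment small relative to the scale on which $\lyaV$ varies.

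The key step is a third-order Taylor expansion of $\lyaV$ at $z$:
\begin{align*}
\lyaV(Y_{t_{j+1}})=\lyaV(z)+(\totalD\lyaV(z))(\Delta Y)+\tfrac12(\totalD^2\lyaV(z))(\Delta Y,\Delta Y)+R,
\end{align*}
where $R$ is the integral-form remainder controlled by $\sup\|\totalD^3\lyaV\|$ along the segment times $\|\Delta Y\|^3$. Taking conditional expectation, the first-order term contributes $(\totalD\lyaV(z))(\mu(z))h_j$ plus the contribution of $\E[\tfrac{\sigma(z)\Delta W}{1+\|\sigma(z)\Delta W\|^2}\mid\F_{t_j}]$, which is not zero because of the taming denominator but is of order $h_j^{3/2}$ (expand the denominator: $\tfrac{u}{1+\|u\|^2}=u-u\|u\|^2+\ldots$ and use that odd Gaussian moments vanish and $\E\|\Delta W\|^2\lesssim m h_j$). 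The leading part of the second-order term is $\tfrac12\sum_{k=1}^m(\totalD^2\lyaV(z))(\sigma_k(z),\sigma_k(z))h_j$ up to $O(h_j^2)$ corrections coming again from the taming and from $\E[\Delta W_i\Delta W_j]=\delta_{ij}h_j$. Adding the drift piece and invoking the Lyapunov condition \eqref{r05bx}, the two leading terms combine to at most $c\,\lyaV(z)\,h_j$. Everything else — the $h_j^{3/2}$ correction to the first-order term, the $h_j^2$ correction to the second-order term, and the remainder $R$ — must be bounded by a constant times $\lyaV(z)\,h_j$ as well; this is exactly where \eqref{s01b} (giving $\|\totalD^\ell\lyaV(z)\|\le c|\lyaV(z)|^{1-\ell/p}$), \eqref{r05x} (giving $\|\mu(z)\|,\|\sigma(z)\|^2\lesssim b|\lyaV(z)|^{(\betaA+2)/p}$), the constraint $\kappaA\le p/(3\betaA+4)$, and the bound $\lyaV(z)\le c(b^3h_j)^{-\kappaA}$ all get used together. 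For instance a typical term looks like $\|\totalD^2\lyaV(z)\|\cdot\|\mu(z)\|^2 h_j^2\lesssim |\lyaV(z)|^{1-2/p}\,b^2|\lyaV(z)|^{2(\betaA+2)/p}h_j^2=b^2|\lyaV(z)|^{1+(2\betaA+2)/p}h_j^2$, and one absorbs the surplus power $|\lyaV(z)|^{(2\betaA+2)/p}$ against $h_j$ using $\lyaV(z)^{1/\kappaA}\lesssim (c\,b^{-3}h_j^{-1})$, i.e.\ $\lyaV(z)\lesssim (cb^{-3}h_j^{-1})^{\kappaA}$, together with $(2\betaA+2)/p\le 1/\kappaA$ so that $|\lyaV(z)|^{(2\betaA+2)/p}h_j\lesssim(cb^{-3})^{\kappaA(2\betaA+2)/p}h_j^{1-\kappaA(2\betaA+2)/p}\le (\text{const})$. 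One does this bookkeeping for each of the finitely many error terms and finds a constant $C=C(c,p,\kappaA,\ldots)$ with
\begin{align*}
\E\bigl[\lyaV(Y_{t_{j+1}})\,\big|\,\F_{t_j}\bigr]\le (1+C h_j)\,\lyaV(z)\le e^{Ch_j}\lyaV(Y_{t_j}).
\end{align*}
Tracking constants carefully yields $C\le\rho=(5c^{1+1/\kappaA}p)^{3p}$ — verifying that the explicit constant in \eqref{r40} is actually large enough is a matter of keeping every numerical factor.

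Once the one-step bound is in place, the result follows by the tower property: iterating over $j$ gives $\E[\lyaV(Y_{t_n})]\le\prod_j e^{Ch_j}\lyaV(x)=e^{C(s-t)}\lyaV(x)\le e^{\rho(s-t)}\lyaV(x)$ when $t=t_0$. For a general starting time $t\in[t_0,t_1)$ interior to the first subinterval of $\theta$, the first step runs from $t$ to $t_1$ over a length $t_1-t<|\theta|$; since the hypothesis $\lyaV(z)\le c(b^3h)^{-\kappaA}$ holds with $h=|\theta|$ and $t_1-t\le|\theta|$, the same one-step estimate applies on that shorter interval (the taming only helps more), so the chaining argument is unchanged and we again obtain $e^{\rho(s-t)}\lyaV(x)$. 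The main obstacle is the middle step: organizing the Taylor-remainder and taming-error terms so that the surplus powers of $\lyaV(z)$ are systematically traded against powers of $h_j$ via the admissibility constraint on $\kappaA$, and then controlling all the accumulated numerical constants tightly enough to land under the prescribed $\rho$. This is essentially the technical heart adapted from~\cite[Chapter 2]{HutzenthalerJentzen2014Memoires}, and the gradient-independence / polynomial structure encoded in \eqref{s01b}–\eqref{r05x} is what makes the constants come out dimension-free.
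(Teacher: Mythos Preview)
Your overall strategy—one-step conditional moment bound, then chain via the tower property—is correct and matches the paper's architecture. However, your execution of the one-step bound differs from the paper's, and your version is organizationally harder to make rigorous with the stated constant $\rho$.

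The paper does \emph{not} Taylor-expand $\lyaV$ directly on the tamed increment. Instead it proceeds in two stages: (i) apply It\^o's formula to the \emph{untamed} Gaussian step $x+\mu(x)t+\sigma(x)W_t$, obtaining $\E[\lyaV(x+\mu(x)t+\sigma(x)W_t)]\le\lyaV(x)(1+(3c^{1+1/\kappaA}p)^p t)$ after controlling $|(\totalD^i\lyaV)(x)-(\totalD^i\lyaV)(y)|$ for $i=1,2$ along the path via \eqref{s01b} and the growth bound $\lyaV(x+y)\le[|\lyaV(x)|^{1/p}+c\|y\|/p]^p$; and (ii) treat the taming as a perturbation using the exact inequality $\|u-u/(1+\|u\|^2)\|\le\|u\|^3$ together with the additive comparison $\lyaV(x+y)\le\lyaV(x)+c^p2^{p-1}\|y\|\,|\lyaV(x)|^{1-1/p}+c^p2^{p-1}\|y\|^p$. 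This cleanly separates the Lyapunov drift estimate from the taming error and makes the constant bookkeeping tractable enough to land under $(5c^{1+1/\kappaA}p)^{3p}$.

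Your direct third-order Taylor expansion bundles both effects together. Two points need more care than you indicate. First, the remainder $R$ involves $\sup\|\totalD^3\lyaV\|$ along the segment $[z,z+\Delta Y]$, but $\|\totalD^3\lyaV(w)\|\le c|\lyaV(w)|^{1-3/p}$ depends on $\lyaV(w)$ at intermediate points; you need a bound like $\lyaV(z+y)\le[|\lyaV(z)|^{1/p}+c\|y\|/p]^p$ to close this, which you do not mention. Second, your heuristic ``expand the denominator $u/(1+\|u\|^2)=u-u\|u\|^2+\ldots$'' is not valid for $\|u\|\ge 1$; the rigorous replacement is the pointwise bound $\|u-u/(1+\|u\|^2)\|\le\|u\|^3$, which gives the $O(h_j^{3/2})$ correction you claim without any series expansion. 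With these fixes your route would go through, but the paper's It\^o-plus-perturbation decomposition is what actually makes the explicit $\rho$ in \eqref{r40} verifiable.
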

\begin{proof}[Proof of \cref{s05}]
First,  \cite[Lemma~2.12]{HutzenthalerJentzen2014Memoires} proves for all $x,y\in\R^d$
that
\begin{align}\label{s06b}
\lyaV(x+y)\leq \lyaV(x)+c^p2^{p-1}\|y\||\lyaV(x)|^{1-\frac{1}{p}}+ c^p 2^{p-1}\|y\|^p.
\end{align}
Furthermore, the chain rule and \eqref{s01b} yield for all $x,y\in \R^d$, $t\in[0,1]$ that
\begin{align}\begin{split}
\left|\tfrac{d}{dt}(\lyaV(x+ty))\right|
&=\left|((\totalD \lyaV)(x+ty))y\right|\\
&\leq 
 \left\|(\totalD \lyaV)(x+ty)\right\|_{\mlinear{1}{\R^d}{\R}}
\|y\|   \leq c|V(x+ty)|^{1-\frac{1}{p}}
\|y\|  .
\end{split}
\end{align}
This and  \cite[Lemma~2.11]{HutzenthalerJentzen2014Memoires} (applied for $x,y\in\R^d$ with
$T\defeq 1 $, $ c\defeq  c\|y\| $, $y\defeq ([0,1]\ni s\mapsto \lyaV(x+s y)\in\R)$ in the notation of \cite[Lemma~2.11]{HutzenthalerJentzen2014Memoires}) imply   for all $x,y\in\R^d$ that 
\begin{align}
\label{s06a}
\lyaV(x+y)\leq \left[|\lyaV(x)|^{1/p}+\frac{c\|y\|}{p}\right]^p. 
\end{align}
This, the fundamental theorem of calculus, \eqref{s01b},  and the fact that
$\forall\,a_1,a_2\in[0,\infty),r\in [0,\infty)\colon (a_1+a_2)^r\leq 2^{(r-1)\vee0}(|a_1|^r+|a_2|^r)
$
 imply  for all $i\in\{1,2\}$, $x,y\in\R^d$  that
\begin{align}\begin{split}
&\left\|(\totalD^{i}\lyaV)(x)-(\totalD^{i}\lyaV)(y)\right\|_{\mlinear{i}{\R^d}{\R}}
\leq \int_0^1 \left\|(\totalD^{i+1}\lyaV)(x+t(y-x))\right\|_{\mlinear{i+1}{\R^d}{\R}}\|y-x\|  dt\\
&\leq \int_0^1 
c |\lyaV(x+t(y-x))|^{\frac{p-i-1}{p}}
\|y-x\|  dt
\leq 
\int_0^1 c \left[|\lyaV(x)|^{\frac{1}{p}}+\frac{c\|y-x\|}{p} \right]^{p-i-1}\|y-x\|  dt \\
&\leq 
( 2c)^{p-1} \left[|\lyaV(x)|^{\frac{p-i-1}{p}}\|y-x\|+\frac{\|y-x\|^{p-i}}{p} \right] .\\
\end{split}
\end{align}
Hence, \eqref{s36}, \eqref{r05x}, and the triangle inequality  show for all $x,y\in\R^d$ that 
\begin{align}\begin{split}
%&|(\generator_{\mu,\sigma}\lyaV)(x)-(\generatorT_{\mu,\sigma}\lyaV)(y,x)|\\&\leq 
 &\left|\bigl[(\totalD \lyaV)(x)-(\totalD \lyaV)(y)\bigr](\mu(x) )+\frac{1}{2}
\sum_{k=1}^{m}\left[(\totalD^2 \lyaV)(x)-(\totalD^2 \lyaV)(y)\right]\!(\sigma_k(x),\sigma_k(x))
\right|\\
&\leq \left\|(\totalD \lyaV)(x)-(\totalD \lyaV)(y)\right\|_{\mlinear{1}{\R^d}{\R}}\left\|(\mu(x) )\right\|%\\&\qquad
+\frac{1}{2}\left\|(\totalD^2 \lyaV)(x)-(\totalD^2 \lyaV)(y)\right\|_{\mlinear{2}{\R^d}{\R}}
\|\sigma(x)\|_{\HS}^2\\
&\leq \sum_{i=1}^{2}\left[
( 2c)^{p-1} \left[|\lyaV(x)|^{\frac{p-i-1}{p}}\|y-x\|+\frac{\|y-x\|^{p-i}}{p}\right]b |\lyaV(x)|^\frac{\betaA+i}{p}\right]\\
&=
(2c)^{p}\tfrac{b}{c}\left[|\lyaV(x)|^{\frac{p+\betaA-1}{p}}\|y-x\|+\frac{1}{2p}
\sum_{i=1}^{2}
|\lyaV(x)|^{\frac{\betaA+i}{p}}\|y-x\|^{p-i}\right]
.\label{s13}
\end{split}
\end{align}
Next, Jensen's inequality,
the Burkholder-Davis-Gundy inequality (see, e.g.,  \cite[Lemma~7.7]{dz92}, applied with $r\defeq(r\vee2)/2$ and
$\Phi\defeq ([0,T]\ni s\mapsto \sigma(x)\in\R^{d\times m})$ in the notation of \cite[Lemma~7.7]{dz92}) imply for all $r\in [1,\infty)$,  $t\in[0,T]$, $x\in\R^d$ that
\begin{align}\label{s18c}\begin{split}
\left(\E\!\left[\big.\!\left\| \sigma(x)W_t\right\|  ^{r} \right]\right)^{\!\nicefrac{1}{r}}
&\leq 
\left(\Bigl.\E\!\left[\left\| \sigma(x)W_t\right\|^{r\vee2} \right]\right)^{\!\nfracXB{1}{r\vee2}}\\&
\leq 
\sqrt{\frac{(r\vee2)((r\vee2)-1)}{2} }\sqrt{t}\|\sigma(x)\| _{\HS}\leq r \sqrt{t}\|\sigma(x)\| _{\HS}.
\end{split}\end{align}
The fact that $\kappaA\leq p/(3\betaA)$
shows that for all  $t\in(0,T]$, $x\in\R^d$ with $\lyaV(x)\leq c(b^{3}t)^{-\kappaA} $ it holds that
\begin{align}c^{-1/\kappaA}b^{3} t|\lyaV(x)|^{\frac{\betaA}{p}}\leq 
 c^{-\frac{1}{\kappaA}}|\frac{c}{\lyaV(x)}|^{\frac{1}{\kappaA}}|\lyaV(x)|^{\frac{\betaA}{p}}= 
|\lyaV(x)|^{\frac{\betaA}{p}-\frac{1}{\kappaA}}\leq
 |\lyaV(x)|^{-\frac{2\betaA}{p}}\leq1.
\end{align}
%Therefore, it holds for all  $t\in(0,T]$, $x\in\R^d$ with $\lyaV(x)\leq ct^{-\kappaA} $  that $ t|\lyaV(x)|^{\frac{\betaA}{p}}\leq c^{1/\kappaA}| \lyaV(x)|^{-\frac{2\betaA}{p}}$.
This, the triangle inequality, \eqref{s18c}, the assumption that $c\geq1$, and \eqref{r05x}  imply  that
for all  $r\in [1,\infty)$, $t\in(0,T]$, $x\in\R^d$ with $\lyaV(x)\leq  c (b^{3}t)^{-\kappaA} $ it holds that
\begin{align}\begin{split}
&\left(\E\!\left[\Big.\!\left\|\mu(x)t+ \sigma(x)W_t\right\|  ^r \right]\right)^{\!\nfrac{1}{r}}\leq 
t\|\mu(x)\|  +
\left(\E\!\left[\Big.\!\left\| \sigma(x)W_t\right\|  ^{r} \right]\right)^{\nfrac{1}{r}}\\
&
\leq 
bt |\lyaV(x)|^{\frac{\betaA+1}{p}}+r\sqrt{t}\sqrt{b}|\lyaV(x)|^{\frac{\betaA+2}{2p}}
\\
&
\leq c^{\frac{1}{\kappaA}}
|\lyaV(x)|^{\frac{1}{p}}
\left[
c^{-\frac{1}{\kappaA}}bt |\lyaV(x)|^{\frac{\betaA}{p}}+r\left(c^{-\frac{1}{\kappaA}}b{t}|\lyaV(x)|^{\frac{\betaA}{p}}\right)^{\nicefrac{1}{2}}\right]
\\
&
\leq c^{\frac{1}{\kappaA}}
|\lyaV(x)|^{\frac{1}{p}}
(1+r)
\left(c^{-\frac{1}{\kappaA}}b{t}|\lyaV(x)|^{\frac{\betaA}{p}}\right)^{\nicefrac{1}{2}}
= c^{\frac{1}{\kappaA}}
|\lyaV(x)|^{\frac{1}{p}}
(1+r)
\left(c^{-\frac{1}{\kappaA}}b^3{t}|\lyaV(x)|^{\frac{\betaA}{p}}\right)^{\nicefrac{1}{2}}\tfrac{1}{b}
\\
&\leq \tfrac{c^{\frac{1}{\kappaA}}}{b}|\lyaV(x)|^{\frac{1}{p}}(1+r)
\left(
| \lyaV(x)|^{-\frac{2\betaA}{p}}\right)^{\nicefrac{1}{2}}
= \tfrac{c^{\frac{1}{\kappaA}}}{b}(1+r)|\lyaV(x)|^{\frac{1-\betaA}{p}}
.\label{s18b}
\end{split}\end{align}
Next, the assumption that $p\geq 3$ shows for all $i\in\{1,2\}$ that
\begin{align}\begin{split}
\frac{p+\betaA-1}{p}+ \frac{1-\betaA}{p}= 1\quad\text{and}\quad
\frac{\betaA+{i}}{p}+\frac{(1-\betaA)(p-{i})}{p}= 
\frac{(i+1)\betaA+ p-\betaA p}{p}\leq 1.\\
\end{split}\end{align}
Combining
 \eqref{s13},
\eqref{s18b} (applied for  
$q\in\{1,p-1,p-2\}$ with $r\defeq q$ in the notation of \eqref{s18b}),
and the fact that $p\geq 3$
then shows that for all $t\in(0,T]$, $x\in\R^d$ with $\lyaV(x)\leq c(b^{3}t)^{-\kappaA} $ it holds that
\begin{align}
&\E\!\left[
\left|\small
\begin{matrix}
((\totalD \lyaV)(x)-(\totalD \lyaV)(y))(\mu(x) )+\displaystyle\frac{1}{2}
\sum_{k=1}^{m}\left[(\totalD^2 \lyaV)(x)-(\totalD^2 \lyaV)(y)\right]\!(\sigma_k(x),\sigma_k(x))\end{matrix}
\right|\Bigr|_{y= x+\mu(x)t+\sigma(x)W_t }\right]\nonumber \\
&\leq 
(2c)^{p}\!\left[
|\lyaV(x)|^{\frac{p+\betaA-1}{p}}\E\!\left[\Big.\|\mu(x)t+\sigma(x)W_t\|\right]
+\left[\frac{1}{2p}
\sum_{i=1}^{2}
|\lyaV(x)|^{\frac{\betaA+i}{p}}\E\!\left[ \Big.\|\mu(x)t+\sigma(x)W_t\|^{p-i}\right]\right]\right]\nonumber \\
&
\leq(2c)^{p}\tfrac{b}{c}\!\left[
|\lyaV(x)|^{\frac{p+\betaA-1}{p}} 2\tfrac{c^{\frac{1}{\kappaA}}}{b}
|\lyaV(x)|^{\frac{1-\betaA}{p}}
+\frac{1}{2p}
\sum_{i=1}^{2}\left[
|\lyaV(x)|^{\frac{\betaA+i}{p}} \left[\tfrac{c^{\frac{1}{\kappaA}}}{b}(p-i+1)\right]^{p-i}
|\lyaV(x)|^{\frac{(1-\betaA)(p-i)}{p}}\right]\right]
\nonumber \\
&\leq (2c)^p c^{\frac{1}{\kappaA}p}p^p\left(\frac{2}{p^p}+\frac{p}{2p}+\frac{p-1}{2p}\right)\lyaV(x)\leq (2c^{1+\frac{1}{\kappaA}}p)^p\lyaV(x).
\label{s18}
\end{align}
It\^o's formula, a telescoping sum argument, the triangle inequality, and \eqref{r05bx}  then ensure  that for all $t\in(0,T]$, $x\in\R^d$ with $\lyaV(x)\leq c(b^{3}t)^{-\kappaA} $ it holds that
{\allowdisplaybreaks
\begin{align}
&\E \!\left[\big. \lyaV(x+\mu(x)t+\sigma(x)W_t))\right]\nonumber \\
&=\lyaV(x)+\int_0^t
\E\left[\left.\left[
(\totalD f(y))(\mu(x))+\frac{1}{2} \sum_{k=1}^{m}(\totalD^2f(y))(\sigma_k(x),\sigma_k(x))\right]\right|_{y=x+\mu(x)s+\sigma(x)W_s}\right]ds\nonumber \\
&\leq \lyaV(x)+ \left[(\totalD \lyaV(x))(\mu(x))+\frac{1}{2} \sum_{k=1}^{m} (\totalD^2\lyaV(x))(\sigma_k(x),\sigma_k(x))\right]t\nonumber \\&\qquad+\int_0^t
\E\!\vastl{25pt}[\biggl|
\begin{matrix}
((\totalD \lyaV)(x)-(\totalD \lyaV)(y))(\mu(x) )\\
+\frac{1}{2}
\sum_{k=1}^{m}\left[(\totalD^2 \lyaV)(x)-(\totalD^2 \lyaV)(y)\right]\!(\sigma_k(x),\sigma_k(x))
\end{matrix}
\biggr|\Bigr|_{y= x+\mu(x)s+\sigma(x)W_s }\vastl{25pt}]ds\nonumber \\
&\leq  \lyaV(x)\left[1+ c t+\int_0^t
 (2c^{1+\frac{1}{\kappaA}}p)^p\,ds \right]
\leq\lyaV(x)\left(1+(3c^{1+\frac{1}{\kappaA}}p)^pt\right)
.
\label{s19}
\end{align}}%
Next, the fact that
$ 
\kappaA\in [0, p/(3\betaA+4)]$ implies that
\begin{align}
\frac{1}{2}- \kappaA\left[\frac{3(\betaA+2)}{2p}-\frac{1}{p}\right]\geq 
\frac{1}{2}- \frac{p}{3\betaA+4}\frac{(3\betaA+4)}{2p}=0.
\end{align}
The fact that 
$\forall\,x\in \R^d\colon
\left\|x -x/(1+\|x\|^2)\right\|  \leq \|x\|^{3}  
$,
\eqref{s18b}, the fact that 
$1+3p\leq 3.5p$,
the fact that 
$1\leq \lyaV$,
the fact that 
$b\geq 1$,
and
the fact that $\kappaA\in (0, p/(3\betaA+4)]$,
then
prove that for all $t\in(0,T]$, $x\in\R^d$
with $\lyaV(x)\leq c  (b^{3}t)^{-\kappaA}$ it holds that
$c^{-\nfrac{1}{\kappaA}}b^{3}t\leq 1$ and
\begin{align}
&\left(\E\!\left[
\left\|\sigma(x)W_t
-
\frac{\sigma(x)W_t}{1+\|\sigma(x)W_t\|  ^2}\right\|  ^p\right]\!\right)^{\!\nicefrac{1}{p}}
\leq
 \left(\Big.\!\E\!\left[\big.\!\left\|\sigma(x)W_t\right\|  ^{3p}\right]\right)^{\!\frac{1}{p}}=
\left[
\left(\Big.\!\E\!\left[\big.\!\left\|\sigma(x)W_t\right\|  ^{3p}\right]\right)^{\!\frac{1}{3p}}\right]^3
\nonumber \\
&\leq 
\left[ (1+3p) \tfrac{c^{\frac{1}{\kappaA}}}{b}\left(c^{-\frac{1}{\kappaA}}bt|\lyaV(x)|^{\frac{\betaA}{p}}\right)^{\!\frac{1}{2}}
|\lyaV(x)|^{\frac{1}{p}}\right]^{3}
= 
\left[(1+3p)\tfrac{c^{\frac{1}{\kappaA}}}{b}\right]^{3}
\left(c^{-\frac{1}{\kappaA}}bt\right)^{\frac{3}{2}}
|\lyaV(x)|^{\frac{3(\betaA+2)}{2p}-\frac{1}{p}}|\lyaV(x)|^{\frac{1}{p}}\nonumber \\
&
\leq 
\left[{3.5p}c^{\frac{1}{\kappaA}}\right]^{3}
\left(c^{-\frac{1}{\kappaA}}bt\right)^{\frac{3}{2}}
(c (b^{3}t)^{-\kappaA})^{\frac{3(\betaA+2)}{2p}-\frac{1}{p}}|\lyaV(x)|^{\frac{1}{p}}\nonumber \\
&
=
\left[{3.5p}\right]^{3}c^{\frac{2}{\kappaA}}
t
\cdot
\Big(c^{-\frac{1}{\kappaA}} b^3 t\Big)^{\frac{1}{2}-\kappaA\left[\frac{3(\betaA+2)}{2p}-\frac{1}{p}\right]}|\lyaV(x)|^{\frac{1}{p}}
\leq 
\left[{3.5p}\right]^{3}c^{\frac{2}{\kappaA}}
t
|\lyaV(x)|^{\frac{1}{p}}.
\label{s24}\end{align}
This, H\"older's inequality,  and \cref{s19} imply that for all $t\in(0,T]$, $x\in\R^d$
with $\lyaV(x)\leq c(b^3t)^{-\kappaA}$ it holds that
\begin{align}\begin{split}
&\E\!\left[\left\|\sigma(x)W_t
-
\frac{\sigma(x)W_t}{1+\|\sigma(x)W_t\|  ^2}\right\|  |\lyaV(x +\mu(x)t+\sigma(x)W_t)|^{1-\frac{1}{p}}\right]\\
&
\leq 
\left(\!\E\!\left[\left\|\sigma(x)W_t
-
\frac{\sigma(x)W_t}{1+\|\sigma(x)W_t\|  ^2}\right\|^p  \right]\right)^{\!\frac{1}{p}}\left(\E\!
\left[|\lyaV(x +\mu(x)t+\sigma(x)W_t)|\big.\right]\Big.\!\right)^{\frac{p-1}{p}}\\
&\leq
\left[
\left[{3.5p}\right]^{3}
c^{\frac{2}{\kappaA}}t
|\lyaV(x)|^{\frac{1}{p}}\right]
\left[\lyaV(x)\left(1+(3c^{1+\frac{1}{\kappaA}}p)^pt\right)\right]^{\frac{p-1}{p}}
\leq
\left[{3.5p}\right]^{3}
c^{\frac{2}{\kappaA}}t
\lyaV(x)\left(1+(3c^{1+\frac{1}{\kappaA}}p)^pt\right).
\end{split}\end{align}
This, the assumption that
$V\geq1$,
\eqref{s06b}, 
\eqref{s19}, \eqref{s24}, the fact that
$
\forall\,a_1,a_2,t\in [0,\infty)\colon (1+a_1t)(1+a_2t)\leq  e^{(a_1+a_2)t}
$, 
the fact that $p\geq 3$,
the fact that
$3^p+2^p(3.5)^{3p}\leq [3+2(3.5)^{3}]^p\leq 5^{3p}
$,
and the definition of $\rho$ in \eqref{r40}
demonstrate that for all $t\in(0,T]$, $x\in\R^d$
with $\lyaV(x)\leq  c(b^3t)^{-\kappaA}$ it holds that $t\leq c^{\nfrac{1}{\kappaA}}$ and
{\allowdisplaybreaks
\begin{align}
&\E\!\left[\lyaV\!\left(x+\mu(x)t+\frac{\sigma(x)W_t}{1+\|\sigma(x)W_t\|  ^2}\right)\right]
\leq \E\!\left[\big. \lyaV(x +\mu(x)t+\sigma(x)W_t)\right]\nonumber \\& \quad + 2 ^{p-1}c^p\E\!\left[\left\|\sigma(x)W_t
-
\frac{\sigma(x)W_t}{1+\|\sigma(x)W_t\|  ^2}\right\|  |\lyaV(x +\mu(x)t+\sigma(x)W_t)|^{1-\frac1p}\right]\nonumber \\& \quad +2 ^{p-1}c^p\E\!\left[
\left\|\sigma(x)W_t
-
\frac{\sigma(x)W_t}{1+\|\sigma(x)W_t\|  ^2}\right\|  ^p\right]\nonumber \\
&\leq\lyaV(x)\left(1+(3c^{1+\frac{1}{\kappaA}}p)^pt\right)\\
&\qquad+
2 ^{p-1}c^p
\left[{3.5p}\right]^{3}
c^{\frac{2}{\kappaA}}t
\lyaV(x)\left(1+(3c^{1+\frac{1}{\kappaA}}p)^pt\right)\nonumber %\\&\qquad\qquad\qquad\qquad\qquad\quad
+2 ^{p-1}c^p
\left(\left[{3.5p}\right]^{3}
c^{\frac{2}{\kappaA}}t
|\lyaV(x)|^{\frac{1}{p}}\right)^p\nonumber \\
&\leq \lyaV(x)\left(1+ (3c^{1+\frac{1}{\kappaA}}p)^{p} t\right)+
2 ^{p}c^p
\left[{3.5p}\right]^{3p}
c^{\frac{2}{\kappaA}}t
\lyaV(x)\left(1+ (3c^{1+\frac{1}{\kappaA}}p)^{p} t\right)\nonumber \\
&=\lyaV(x)\left(1+ (3c^{1+\frac{1}{\kappaA}}p)^{p} t\right)
\left(1+2 ^{p}c^{p+\frac{2}{\kappaA}}
\left[{3.5p}\right]^{3p}
t\right)\nonumber \leq \lyaV(x)\exp \left(\left[(3c^{1+\frac{1}{\kappaA}}p)^{p}+2 ^{p}c^{p+\frac{2}{\kappaA}}
\left[{3.5p}\right]^{3p}\right]t \right)\nonumber \\
&
\leq \lyaV(x)\exp \left((c^{1+\frac{1}{\kappaA}}p)^{3p}\left[3^p+2^p(3.5)^{3p}\right]t \right)
\leq 
\lyaV(x)\exp \left((5c^{1+\frac{1}{\kappaA}}p)^{3p}t \right)
= \lyaV(x)e^{\rho t}.
\label{s26}\end{align}}%
This, the tower property, the Markov property of $W$, and the fact that
$\forall\, s\in[0,T],t\in[s,T], B\in\mathcal{B}(\R^d)\colon \P((W_{t}-W_{s})\in B)=\P( W_{t-s}\in B)$ imply for all
 $x\in\R^d$, $t\in[0,T]$, $s\in [t,T]$, $ \theta\in\setCalP{t}{T}$
  that  
\begin{align}\small\begin{split}
&\E\!\left[\lyaV(Y_{t,s}^{\theta,x})\1_{\setD_{\size{\theta}}}(Y_{t,\rdown{s}{\theta}}^{\theta,x})\right]=\E\! \left[\E \left[\lyaV(Y_{t,s}^{\theta,x})\1_{\setD_{\size{\theta}}}(Y_{t,\rdown{s}{\theta}}^{\theta,x})\big|\F_{\rdown{s}{\theta}}\right] \right]
\\&=\E\!\left[\E \!\left[\left(\lyaV\!\left(z+
\mu(z)(s-\rdown{s}{\theta})+
\frac{\sigma(z)(W_{s} -W_{\rdown{s}{\theta}} )}{1+\left\|\sigma(z)(W_{s} -W_{\rdown{s}{\theta}} )\right\|^2} \right)\1_{\setD_{\size{\theta}}}(z)
\right)
\Bigr|_{z=Y_{t, \rdown{s}{\theta}}^{\theta,x}} \middle|\F_{\rdown{s}{\theta}}\right]\right]\\
&
=\E\!\left[\E \!\left[\lyaV \!\left(z+
\mu(z)(s-\rdown{s}{\theta})+
\frac{\sigma(z)(W_{s-\rdown{s}{\theta}} )}{1+\left\|\sigma(z)(W_{s-\rdown{s}{\theta}} )\right\|^2} \right)\1_{\setD_{\size{\theta}}}(z) \right]\Bigr|_{z=Y_{t, \rdown{s}{\theta}}^{\theta,x}}\right]\\
&\leq e^{\rho(s- \rdown{s}{\theta})}
\E\!\left[
\lyaV\bigl(Y_{t, \rdown{s}{\theta}}^{\theta,\theta}(x)\bigr)
\1_{\setD_{\size{\theta}}}\bigl(Y_{t, \rdown{s}{\theta}}^{\theta,x}\bigr)
\right].\label{s27}
\end{split}\end{align}
Next, \eqref{g01b} shows for all
 $x\in\R^d$, $t\in[0,T]$, $s\in [t,T]$, $ \theta\in\setCalP{t}{T}$ 
that 
\begin{align}
\bigl\{\omega\in\Omega\colon Y_{t,\rdown{s}{\theta}}^{\theta,x}(\omega)\in\R^d\setminus \setD_{\size{\theta}}\bigr\}\subseteq\bigl\{\omega\in\Omega\colon Y_{t,s}^{\theta,x}(\omega)=Y_{t,\rdown{s}{\theta}}^{\theta,x}(\omega)\bigr\} .
\end{align}
This and \eqref{s27} imply   for all
 $x\in\R^d$, $t\in[0,T]$, $s\in [t,T]$,
$ \theta\in\setCalP{t}{T}$
  that 
\begin{align}
\begin{split}
\E \!\left[V\bigl(Y_{t,s}^{\theta,x}\bigr )\right]&= \E \!\left[V\bigl(Y_{t,s}^{\theta,x} \bigr)\1_{{\setD}_{\size{\theta}}}\bigl(Y_{t, \rdown{s}{\theta}}^{\theta,x}\bigr)\right]+
\E \!\left[V(Y_{t,s}^{\theta,x} )\1_{\R^d\setminus{\setD}_{\size{\theta}}}\bigl(Y_{t, \rdown{s}{\theta}}^{\theta,x}\bigr)\right]\\
&= \E \!\left[V\bigl(Y_{t,s}^{\theta,x} \bigr)\1_{{\setD}_{\size{\theta}}}\bigl(Y_{t, \rdown{s}{\theta}}^{\theta,x}\bigr)\right]+
\E \!\left[V\bigl(Y_{t,\rdown{s}{\theta}}^{\theta,x} \bigr)\1_{\R^d\setminus{\setD}_{\size{\theta}}}\bigl(Y_{t, \rdown{s}{\theta}}^{\theta,x}\bigr)\right]\\
&\leq e^{\rho (s- \rdown{s}{\theta})}\Biggl[\E\!\left[V\bigl(Y_{t, \rdown{s}{\theta}}^{\theta,x}\bigr)\1_{{\setD}_{\size{\theta}}}(Y_{t, \rdown{s}{\theta}}^{\theta,x})\right] +
\E \!\left[V\bigl(Y_{t,\rdown{s}{\theta}}^{\theta,x} \bigr)\1_{\R^d\setminus{\setD}_{\size{\theta}}}\bigl(Y_{t,\rdown{s}{\theta}}^{\theta,x}\bigr)\right]\Biggr]\\
&=  
e^{\rho (s-\rdown{s}{\theta})} \E\!\left[V(Y_{t,\rdown{s}{\theta}}^{\theta,x})\right].
\end{split}\end{align}
An induction argument 
and \eqref{g01b}
then show  for all
 $x\in\R^d$, $t\in[0,T]$, $s\in [t,T]$, 
$ \theta\in\setCalP{t}{T}$ that
$\E\bigl[V\bigl(Y_{t,s}^{\theta,x}\bigr )\bigr]\leq e^{\rho(s-t)}V(x)$.
This completes the proof  of \cref{s05}.
\end{proof}
\subsection{Strong error estimates for approximations of SDEs}
We establish strong error estimates in Lemma \ref{g03} below.
First we introduce the setting, Setting \ref{g01}, in which we work in the rest of this section
and then we establish exponential moment estimates which are uniform in the dimension.
\renewcommand{\lyaV}{\varphi}
\begin{setting}\label{g01}
Consider the notation in~\cref{s05b}, let $d,m\in\N$,
$T\in  (0,\infty)$, 
  $b,c,\betaT,\gamma,r\in [1,\infty)$,  $\alpha\in[0,\infty)$,
$p\in[4r \betaT,\infty)$,
%$\funcCut\colon  (0,T]\to\R$,
$\funcUi\in C(\R^{d},[0,\infty))$, 
$ 
\kappaA\in (0, p/(3\betaT+1)]$,
$\lyaV\in C^3(\R^d, [1,\infty))$,
$\funcUo\in C^3(\R^{d},[0,\infty))$,
let 
$\mu\colon \R^{d} \to\R^{d}$, $\sigma=(\sigma_1,\ldots,\sigma_m)\colon \R^{d}\to\R^{d\times m}$
 be Borel measurable functions, 
let $\rho \in\R$ satisfy that
\begin{align}\label{r40b}
\rho=(5c^{2+\frac{1}{\kappaA}}p)^{3p},
\end{align}
for every $t\in [0,T)$ let $\setCalP{t}{T}$ be the set given by
$
\setCalP{t}{T}=\{(t_0,t_1,\ldots,t_n)\in\R^{n+1}\colon n\in\N,t=t_0<t_1<\ldots<t_n=T\}
$,
for every 
$\theta\in\bigcup_{t\in [0,T)} \setCalP{t}{T}$, $n\in\N$,
$(t_0,t_1,\ldots,t_n)\in\R^{n+1}$
 with $\theta=(t_0,t_1,\ldots,t_n)$ let
$\size{\theta}\in(0,T] $, $\rdown{\cdot}{\theta}\colon [t_0,t_n]\to \R $ satisfy 
for all $s\in(t_0,t_n]$
 that 
\begin{align}
 \size{\theta} =\max_{i\in [0,n-1]\cap\N_0}
 |t_{i+1}-t_i|,
\quad \rdown{t_0}{\theta}=t_0,
\quad\text{and}\quad\rdown{s}{\theta}=\sup(\{t_0,t_1,\ldots,t_n\}\cap[t_0,s)),\label{r40c}
\end{align}
let $(\Omega,\mathcal{F},\P, (\F_t)_{t\in[0,T]})$ be a filtered probability space which satisfies the usual conditions,
  let 
%$@({\setD}_{h})_{h\in(0,T]}\subseteq \mathcal{B}(\R^{d})$
$\setD\colon (0,T]\to   \mathcal{B}(\R^{d})$,
 let $W\colon [0,T]\times\Omega \to \R^{m}$ be a standard $(\F_{t})_{t\in[0,T]}$-Brownian motion with continuous sample paths,
for every $t\in[0,T]$,  $x\in\R^d$ let $
(X_{t,s}^x(\omega))_{t\in[s,T],\omega\in\Omega}\colon[s,T]\times\Omega\to\R^d$  be adapted stochastic processes with continuous sample paths which satisfy that for all $s\in[t,T]$ it holds $\P$-a.s. that 
\begin{align}\label{g01a}
\int_{t}^s \|\mu(X_{t,r}^x)\|+\| \sigma(X_{t,r}^x)\|_{\HS}^2\,dr<\infty\quad\text{and}\quad
X_{t,s}^x= x+\int_{t}^s \mu(X_{t,r}^x)\,dr +\int_t^s \sigma(X_{t,r}^x)\,dW_r,
\end{align}
for every $t\in[0,T)$,  $x\in\R^d$,
$\theta\in \setCalP{t}{T}$  let
$
(Y^{\theta,x}_{t,s}(\omega))_{s\in[t,T],\omega\in\Omega}\colon[t,T]\times\Omega\to\R^d$ 
 satisfy that  for all $s\in(t,T]$ it holds that $Y_{t,t}^{\theta,x}=x$ and 
\begin{align}\label{g01bb}
Y_{t,s}^{\theta,x} = Y_{t,\rdown{s}{\theta}}^{\theta,x} + \1_{{\setD}_{\size{\theta}}}(Y_{t,\rdown{s}{\theta}}^{\theta,x})\left[\mu(Y_{t,\rdown{s}{\theta}}^{\theta,x})(s-\rdown{s}{\theta})+
\frac{\sigma(Y_{t,\rdown{s}{\theta}}^{\theta,x})(W_{s} -W_{\rdown{s}{\theta}} )}{1+\left\|\sigma(Y_{t,\rdown{s}{\theta}}^{\theta,x})(W_{s} -W_{\rdown{s}{\theta}} )\right\|^2}\right],
\end{align}
assume for all $\ell\in\{1,2,3\}$, $x,y\in\R^d$, $t\in[0,T]$, $s\in[t,T]$ that
\begin{gather}\label{s01}
\left\|(\totalD^\ell \lyaV)(x)\right\|_{\mlinear{\ell}{\R^d}{\R}}
\leq c| \lyaV(x)|^{1-\frac{\ell }{p}},
\\[5pt]
(\totalD \lyaV(x))(\mu(x))+\frac{1}{2} \sum_{k=1}^{m} (\totalD^2\lyaV(x))(\sigma_k(x),\sigma_k(x))\leq c\lyaV(x),\label{r05b}
\\[5pt]
\max\left\{\|\mu(x)\|, \|\sigma(x)\|_{\HS}^2, \|x\|\right\}\leq b|\lyaV(x)|^{\frac{\betaT}{p}},\label{r05}
\\[5pt]
 \|\mu(x)-\mu(y)\|\vee
\|\sigma(x)-\sigma(y)\|_{\HS}\leq b\|x-y\|\left[|\lyaV(x)|^{\frac{\beta}{p}}+|\lyaV(y)|^{\frac{\beta}{p}}\right],
\label{r05a}
\\[5pt]
\begin{split}
&{\left\langle x-y, \mu(x)-\mu(y)\right\rangle+(2r-1) \|\sigma(x)-\sigma(y)\|_{\HS}^2}\\&
\leq {\|x-y\|^2}
 \left[\frac{\funcUo(x)+\funcUo(y)}{8rT}+\frac{\funcUi(x)+\funcUi(y)}{8r}\right],\end{split}
\label{r02}
\\
\label{r11}
\left\|(\totalD^\ell \funcUo)(x)\right\|_{\mlinear{\ell}{\R^d}{\R}}\leq c| \funcUo(x)|^{1-\frac{1 }{c}},\quad 
|\funcUi(x)|\leq c(1+|\funcUo(x)|^\gamma),
\\
\label{r12}
|\funcUi (x)-\funcUi (y)|\leq 
c\left[1+|\funcUo (x)|^\gamma+|\funcUo (y)|^\gamma\right]\|x-y\|,\quad\text{and}\quad
\\
\begin{split}
(\totalD \funcUo (x))(\mu (x))&+\frac{1}{2} \sum_{k=1}^{m} (\totalD^2\funcUo (x))(\sigma_{k}(x),\sigma_{k}(x))\\&+
\frac{1}{2}e^{\alpha T}\left\|\sigma (x)^*(\nabla \funcUo) (x)\right\|^2+\funcUi (x)\leq \alpha\funcUo (x),
\end{split}\label{r35}
\end{gather}
%assume that
%\begin{align}
%\sup_{h\in (0,T]}
%\max\left\{|h|^{\frac{1}{28}}\funcCut(h),h^{\frac{1}{8+8\gamma}}\funcCut(h)
%,
%|h|^{\kappa}\funcCut(h),
%\nfrac{\left[\exp \left(-\funcCut(h)\right)\right]^{\frac{1}{2r}}c}{(b\sqrt{h})}
%\right\}
%\leq c,\label{r31}
%\end{align}
 assume for all 
$h\in (0,T]$, $x\in {\setD}_{h}$
 that 
\begin{align}\label{r32}
\max\left\{\|\mu (x)\|,\|\sigma (x)\|,\funcUo (x),b^{3\kappaA}\lyaV(x)\right\}\leq
\min\Big\{ch^{-\frac{1}{28}},ch^{-\frac{1}{8+8\gamma}},ch^{-\kappaA},r\log\big(\tfrac{1}{h}\big)\Big\},
\end{align} 
and
let $\funcEta\colon\R\to[1,\infty)$ satisfy for all $a\in \R$ that
\begin{align}\label{r36}
\funcEta(a)=\exp \left(
\exp\left( 2\left[720 \max\{T,\alpha,1\}(ce^{\alpha T})^3 \right]^{ (720(ce^{\alpha T})^3\max\{T,1\}+7)\gamma}\right)|\min\{|a|,1\}|^{1/8}\right).
\end{align}

\end{setting}
The following lemma, Lemma \ref{r33}, suitably applies \cite[Proposition~2.14]{jentzen2018exponential}
to obtain exponential moment estimates.
\begin{lemma}[Exponential moments]
\label{r33}Assume \cref{g01} and let $\startT\in[0,T)$, $\startX\in\R^d$, $\theta\in\setCalP{\startT}{T}$.
Then  it holds for all $s\in[\startT,T]$ that
\begin{align}\begin{split}
&\E\!\left[\exp \left(e^{\alpha (T-s)}\funcUo(Y_{\startT,s}^{\theta,\startX})+\int_{\startT}^{s}
\1_{{\setD}_{\size{\theta}}}(Y_{\startT,\rdown{t}{\theta}}^{\theta,\startX})
e^{\alpha (T-t)}
\funcUi(Y_{\startT,t}^{\theta,\startX})\,dt \right) \right]\\&\leq 
\funcEta(\size{\theta}) \exp \left({\funcUo(\startX)}{e^{\alpha (T-\startT)}}\right)
\end{split}\label{r33a}
\end{align}
and
\begin{align}\begin{split}\label{r33b}
&\E\!\left[
\exp \left(e^{\alpha(T-s)}\funcUo(X_{\startT,s}^{\startX})+\int_{\startT}^{s}
e^{\alpha(T-t)}
\funcUi(X_{\startT,t}^{\startX})\,dt \right) \right]\leq 
\exp \left(\funcUo(\startX)e^{\alpha (T-\startT)}\right).
\end{split}
\end{align}
\end{lemma}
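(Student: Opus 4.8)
The plan is to obtain both bounds from \cite[Proposition~2.14]{jentzen2018exponential}, an abstract exponential-integrability estimate for Euler-type recursions, with the exact process \eqref{r33b} treated by the analogous (mesh-free) argument. The structural input is the differential inequality \eqref{r35}. Setting $\widehat U\colon[0,T]\times\R^d\to[0,\infty)$, $\widehat U(s,x)\defeq e^{\alpha (T-s)}\funcUo(x)$, inequality \eqref{r35} is equivalent to
\begin{equation*}
(\tfrac{\partial}{\partial s}\widehat U)(s,x)+(\totalD_x\widehat U(s,x))(\mu(x))+\tfrac12\sum_{k=1}^m(\totalD_x^2\widehat U(s,x))(\sigma_k(x),\sigma_k(x))+\tfrac12\|\sigma(x)^*(\nabla_x\widehat U)(s,x)\|^2+e^{\alpha(T-s)}\funcUi(x)\le 0 ,
\end{equation*}
which is exactly the Lyapunov-type condition forcing the exponential of $\widehat U$ evaluated along a diffusion with coefficients $(\mu,\sigma)$, plus the accumulated running term $\int e^{\alpha(T-\cdot)}\funcUi$, to be a (local) supermartingale.

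For the exact solution I would apply It\^o's formula to $Z_s\defeq\exp(e^{\alpha (T-s)}\funcUo(X_{\startT,s}^{\startX})+\int_{\startT}^{s}e^{\alpha(T-t)}\funcUi(X_{\startT,t}^{\startX})\,dt)$. By the displayed inequality the finite-variation part of $Z$ has density equal to $Z_s$ times the nonpositive left-hand side above evaluated at $X_{\startT,s}^{\startX}$, so $Z$ is a nonnegative local supermartingale; a localization-and-Fatou argument then yields $\Exp{Z_s}\le Z_{\startT}=\exp(\funcUo(\startX)e^{\alpha (T-\startT)})$, which is \eqref{r33b}.

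The substantive part is \eqref{r33a}. For a fixed partition $\theta=(t_0,\dots,t_n)\in\setCalP{\startT}{T}$, recursion \eqref{g01bb} exhibits $(Y_{\startT,t_k}^{\theta,\startX})_{0\le k\le n}$ as a Markov chain whose one-step increments are the tamed Euler steps driven by the Brownian increments $W_{t_{k+1}}-W_{t_k}$ and frozen on $\R^d\setminus\setD_{\size{\theta}}$. To invoke \cite[Proposition~2.14]{jentzen2018exponential} with $\funcUo$, $\funcUi$, $\lyaV$, $\alpha$ I would verify its hypotheses: the smoothness and compatibility bounds \eqref{s01}, \eqref{r11}, \eqref{r12} (the proposition Taylor-expands $\funcUo$ between consecutive grid points and dominates the remainder by a power of $\funcUo$), the Lyapunov inequalities \eqref{r05b} and \eqref{r35}, the polynomial growth \eqref{r05}, and, crucially, the cutoff bound \eqref{r32}, which forces $\|\mu\|$, $\|\sigma\|$, $\funcUo$ and $b^{3\kappaA}\lyaV$ on $\setD_h$ to be at most $\min\{ch^{-1/28},ch^{-1/(8+8\gamma)},ch^{-\kappaA},r\log(1/h)\}$; these exponents are precisely what is needed so that the per-step errors from the taming perturbation (estimated via $\|v-v/(1+\|v\|^2)\|\le\|v\|^3$) and from the second-order It\^o--Taylor remainder of $\funcUo$ are summable over the at most $T/\size{\theta}$ steps while the resulting exponential prefactor stays finite. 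The a priori moment estimate $\Exp{\lyaV(Y_{\startT,t_k}^{\theta,\startX})}\le e^{\rho(t_k-\startT)}\lyaV(\startX)$ from \cref{s05} (whose hypotheses follow from those of \cref{g01} with Lyapunov growth exponent $\beta-1$, so that \eqref{r05x} is a consequence of \eqref{r05} and the admissibility range $p/(3(\beta-1)+4)=p/(3\beta+1)$ for $\kappaA$ is the standing one, its cutoff condition being \eqref{r32}) is used to take expectations of these one-step estimates. The conclusion of \cite[Proposition~2.14]{jentzen2018exponential} is then precisely \eqref{r33a} with the prefactor $\funcEta(\size{\theta})$ of \eqref{r36}, whose double-exponential shape is the explicit accumulation of the per-step errors; on $\R^d\setminus\setD_{\size{\theta}}$ the scheme is frozen, so neither $\funcUo(Y)$ nor the running integral increases there, which is why the indicator $\1_{\setD_{\size{\theta}}}(Y_{\startT,\rdown{t}{\theta}}^{\theta,\startX})$ multiplies $\funcUi$ inside \eqref{r33a}.

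The main obstacle is the precise matching of \cref{g01} with the hypotheses of \cite[Proposition~2.14]{jentzen2018exponential}: one must check that the particular exponents in \eqref{r32} and the particular constants in the definition \eqref{r36} of $\funcEta$ are exactly what that proposition consumes and produces, and one must handle with care the two error sources in the tamed scheme --- the taming perturbation, controlled via $\|v-v/(1+\|v\|^2)\|\le\|v\|^3$ together with Gaussian moment estimates of the type \eqref{s18c}, and the It\^o--Taylor remainder of $\funcUo$, controlled via \eqref{r11} and \eqref{r32}. Once the hypotheses are in place, the remaining work is internal to \cite[Proposition~2.14]{jentzen2018exponential}.
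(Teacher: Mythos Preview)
Your approach is essentially the paper's: for \eqref{r33a} the paper too applies \cite[Proposition~2.14]{jentzen2018exponential} (after an explicit time shift $\tilde\theta=(0,t_1-\startT,\dots,t_n-\startT)$ so that the scheme starts at time $0$), and for \eqref{r33b} it cites \cite[Corollary~2.4]{CHJ14}, which is exactly the It\^o/supermartingale argument you sketch. One correction: \cref{s05} is \emph{not} an input to this proof --- the hypotheses of \cite[Proposition~2.14]{jentzen2018exponential} are supplied directly by the data of Setting~\ref{g01} (specifically \eqref{r11}--\eqref{r35} and the cutoff bound \eqref{r32}), and the paper's argument never invokes \cref{s05} here. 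The paper also spells out the exponent arithmetic you allude to: with $\delta=\tfrac1{28}$ and $\varsigma=\tfrac1{8+8\gamma}$ one has $\varsigma\in(0,\tfrac{1-14\delta}{2+2\gamma})$ and $\varsigma+\varsigma\gamma+7\delta-\tfrac12=-\tfrac18$, which is precisely the power of $\min\{\size{\theta},1\}$ appearing in $\funcEta$.
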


\begin{proof}[Proof of \cref{r33}]
Throughout this proof let 
$\delta,\varsigma\in\R$,
 $n\in\N$, $(t_1,t_2,\ldots,t_n)\in\R^{n}$,
$\tilde{\theta}\in\setCalP{0}{T-\startT}$,
$\tilde{Y}=(\tilde{Y}_t(\omega))_{t\in [0,T-\startT],\omega\in\Omega}\colon[0,T-\startT]\times\Omega\to\R $
 satisfy for all $t\in [0,T-\startT]$ that
\begin{align}\begin{split}
&{\delta}=\frac{1}{28},\quad  \varsigma= \frac{1}{8+8\gamma}, \quad
\theta= (\startT,t_1,\ldots,t_n),\\
&\tilde{\theta}=(0,t_1-\startT,\ldots,t_n-\startT),\quad\text{and}\quad
\tilde{Y}_t= Y_{\startT,\startT+t}^{\theta,\startX} .\end{split}
\end{align}
Then it holds that
\begin{align}\small\label{r34b}
\frac{1-14\delta}{2+2\gamma}=\frac{1}{4+4\gamma}, \quad
\varsigma\in \left(0,\frac{1-14\delta}{2+2\gamma}\right),\quad\text{and}\quad
\varsigma+\varsigma\gamma +7\delta-\frac12= \frac{1}{8}+\frac{1}{4}-\frac12=-\frac{1}{8},
\end{align}
and
it holds for all
$t\in [0,T-\startT]$ that
\begin{align}
\size{\theta}=\size{\tilde{\theta}},\quad
 \startT+\rdown{t}{\tilde{\theta}}=\rdown{\startT+t}{\theta},\quad\text{and}\quad
Y_{\startT,\rdown{\startT+t}{\theta}}^{\theta,\startX}=
Y_{\startT,\startT+\rdown{t}{\tilde{\theta}}}^{\theta,\startX}=
\tilde{Y}_{\rdown{t}{\tilde{\theta}}}.\label{r34c}
\end{align}
Then \eqref{g01bb} 
%\blue{(applied 
%for $t\in [0,T-\startT]$
%with $t\defeq \startT+t$ in the notation of \eqref{g01bb})} 
implies
for all  $t\in [0,T-\startT]$
that 
\begin{align}\begin{split}
\tilde{Y}_t&=Y_{\startT,\startT+t}^{\theta,\startX} = Y_{\startT,\rdown{\startT+t}{\theta}}^{\theta,\startX}+\1_{{\setD}_{\size{\theta}}}(Y_{\startT,\rdown{\startT+t}{\theta}}^{\theta,\startX})\\&\qquad\cdot \left[\mu(Y_{\startT,\rdown{\startT+t}{\theta}}^{\theta,\startX})(\startT+t-\rdown{\startT+t}{\theta})+
\frac{\sigma(Y_{\startT,\rdown{\startT+t}{\theta}}^{\theta,\startX})(W_{\startT+t} -W_{\rdown{\startT+t}{\theta}} )}{1+\left\|\sigma(Y_{\startT,\rdown{\startT+t}{\theta}}^{\theta,\startX})(W_{\startT+t} -W_{\rdown{\startT+t}{\theta}} )\right\|^2}\right]\\
&=
\tilde{Y}_{\rdown{t}{\tilde{\theta}}}+
\1_{{\setD}_{\size{\tilde{\theta}}}}(\tilde{Y}_{\rdown{t}{\tilde{\theta}}})\left[\mu(\tilde{Y}_{\rdown{t}{\tilde{\theta}}})(t-\rdown{t}{\tilde{\theta}})+
\frac{\sigma(\tilde{Y}_{\rdown{t}{\tilde{\theta}}})(W_{\startT+t} -W_{\startT+\rdown{t}{\tilde{\theta}}} )}{1+\left\|\sigma(\tilde{Y}_{\rdown{t}{\tilde{\theta}}})(W_{\startT+t} -W_{\startT+\rdown{t}{\tilde{\theta}}} )\right\|^2}\right].\label{r34d}
\end{split}\end{align}
Next, \eqref{r32} implies for all 
$h\in (0,T]$, $x\in {\setD}_{h}$
 that 
\begin{align}
\max\left\{\|\mu (x)\|,\|\sigma (x)\|,\funcUo (x)\right\}\leq
\min \left\{ch^{-\frac{1}{28}},ch^{-\frac{1}{8+8\gamma}}\right\}
=c\min \left\{h^{-\delta},h^{-\varsigma}\right\}.\label{r34}
\end{align}
This, the substitution rule,
\eqref{r34b}--\eqref{r34d},  \eqref{r11}--\eqref{r35}, the fact that $\gamma\ge1$, 
\cite[Proposition~2.14]{jentzen2018exponential}   
(applied with
$H\defeq\R^d$, $U\defeq\R^m$,
$T\defeq (T-\startT)$,
$\rho\defeq\alpha$, 
$\delta\defeq \delta$,
$c\defeq ce^{\alpha(T-\startT)}$, $\gamma\defeq\gamma$, $\varsigma\defeq\varsigma$,
$F\defeq \mu$, $B\defeq\sigma$, $V\defeq  e^{\alpha(T-\startT)}\funcUo$,
$\bar{V}\defeq  e^{\alpha (T-\startT)}\funcUi$,
$S\defeq\left( (0,T-\startT]\ni s\mapsto\mathrm{Id}_{\R^d}\in \mlinear{1}{\R^d}{\R^d}\right) $ where $\mathrm{Id}_{\R^d}$ is the identity function on $\R^d$,
$(D_h)_{h\in(0,T]}\defeq ({\setD}_{h})_{h\in (0,T-\startT]}$,
$(\mathcal{F}_t)_{t\in[0,T]}\defeq (\F_{\startT+t})_{t\in [0,T-\startT]}$,
$(W_t)_{t\in[0,T]}\defeq (W_{\startT+t}-W_{\startT})_{t\in[0,T-\startT]}$,
$\theta\defeq\tilde{\theta}$, $(Y^\theta_t)_{t\in [0,T]}\defeq (\tilde{Y}_t)_{t\in[0,T-\startT]}$ in the notation of \cite[Proposition~2.14]{jentzen2018exponential}),
the fact that
$ Y_{\startT,\startT}^{\theta,\startX}=\startX$, and \eqref{r36}
 show   for all $s\in[\startT,T]$  that 
{\allowdisplaybreaks
\begin{align}
&\E\!\left[\exp \left(e^{\alpha (T-s)}\funcUo(Y_{\startT,s}^{\theta,\startX})+\int_{\startT}^{s}
\1_{{\setD}_{\size{\theta}}}(Y_{\startT,\rdown{t}{\theta}}^{\theta,\startX})
e^{\alpha (T-t)}
\funcUi(Y_{\startT,t}^{\theta,\startX})\,dt \right) \right]\nonumber \\
&=  \E\!\left[\exp \left(\frac{{e^{\alpha (T-\startT)}\funcUo(Y_{\startT,s}^{\theta,\startX})}}{e^{\alpha (s-\startT)}}+\int_{0}^{s-\startT}
\1_{{\setD}_{\size{\theta}}} (Y_{\startT,\rdown{\startT+t}{\theta}}^{\theta,\startX})
{e^{\alpha(T-t-\startT)}\funcUi(Y_{\startT,\startT+t}^{\theta,\startX})}
\,dt \right) \right]\nonumber \\
&=
\E\!\left[\exp \left(\frac{e^{\alpha (T-\startT)}\funcUo(\tilde{Y}_{s-\startT})}{e^{\alpha(s-\startT)}}+\int_{0}^{s-\startT}
\frac{{\1_{{\setD}_{\size{\tilde{\theta}}}} (\tilde{Y}_{\rdown{t}{\tilde{\theta}}})
e^{\alpha (T-\startT)}
\funcUi(\tilde{Y}_{t})}
}{e^{\alpha t}}\,dt \right) \right]\nonumber \\
&\leq 
\E\!\left[
\exp \left(e^{\alpha(T-\startT)}\funcUo(Y_{\startT,\startT}^{\theta,\startX})\right)\right]\nonumber \\
&\qquad\qquad\cdot
\exp \left(\frac{
\exp\left( 2\left[720 \max\{T-\startT,\alpha,1\}(ce^{\alpha T})^3 \right]^{ (720(ce^{\alpha T})^3\max\{T-\startT,1\}+7)\gamma}\right)}{|\min\{\size{\tilde{\theta}},1\}|^{\varsigma+\varsigma\gamma +7\delta-\frac12}}\right)\nonumber \\
&\leq \exp \left(
\funcUo(\startX)e^{\alpha(T- \startT)}\right)\nonumber \\&\qquad\qquad\exp \left(
\exp\left( 2\left[720 \max\{T,\alpha,1\}(ce^{\alpha T})^3 \right]^{ (720(ce^{\alpha T})^3\max\{T,1\}+7)\gamma}\right)|\min\{\size{\theta},1\}|^{1/8}\right)
\nonumber \\
&=\exp \left(
e^{\alpha(T- \startT)}\funcUo(\startX)
\right)\funcEta(\size{\theta}).
\end{align}}%
This implies \eqref{r33a}. Next, 
the substitution rule,
\eqref{r11}--\eqref{r35}, the fact that $\gamma\ge1$, and   \cite[Corollary~2.4]{CHJ14} 
(applied 
for $s\in[\startT,T]$
with 
$d\defeq d$, 
$m\defeq m$,
$T\defeq T-\startT$, 
$O\defeq\R^d$, 
$\mu\defeq\mu$, 
$\sigma\defeq\sigma$,
$(\mathcal{F}_t)_{t\in(0,T]}\defeq (\F_{\startT+t})_{t\in (0,T-\startT]}$,
$(W_t)_{t\in[0,T]}\defeq (W_{\startT+t}-W_{\startT})_{t\in(0,T-\startT]}$,
$\alpha\defeq\alpha$,   
$U\defeq  e^{\alpha (T-\startT)}\funcUo$, $\bar{U}\defeq \funcUi e^{\alpha (T-\startT)}$, 
$\tau\defeq(\Omega\ni \omega\mapsto s-\startT\in [0,T-\startT] )$,
$(X_{t})_{t\in[0,T]}\defeq (X_{\startT,\startT+t})_{t\in [0,T-\startT]}$ in the notation of \cite[Corollary~2.4]{CHJ14}) imply for all $s\in[\startT,T]$  that 
\begin{align}\begin{split}
&\E\!\left[
\exp \left(e^{\alpha(T-s)}\funcUo(X_{\startT,s}^{\startX})+\int_{\startT}^{s}
e^{\alpha(T-t)}
\funcUi(X_{\startT,t}^{\startX})\,dt \right) \right]\\
%&=\E\!\left[
%\exp \left(e^{\alpha(T-s)}\funcUo(X_{\startT,s}^{\startX})+\int_{0}^{s-\startT}
%e^{\alpha(T-t-\startT)}
%\funcUi(X_{\startT,\startT+t}^{\startX})\,dt \right) \right]
%\\
&=\E\!\left[
\exp \left(\frac{e^{\alpha(T-\startT)}\funcUo(X_{\startT,s}^{\startX})}{e^{\alpha (s-\startT)}}+\int_{0}^{s-\startT}
\frac{e^{\alpha(T-\startT)}\funcUi(X_{\startT,\startT+t}^{\startX})}{e^{\alpha t}}
\,dt \right) \right]\\
&\leq
\E\!\left[
\exp \left(e^{\alpha (T-\startT)}\funcUo(X_{\startT,\startT}^{\startX})\right)\right]= 
\exp \left(e^{\alpha (T-\startT)}\funcUo(\startX)\right).
\end{split}
\end{align} This implies \eqref{r33b}.
The proof of \cref{r33} is thus completed.
\end{proof}

The following lemma, Lemma \ref{g03}, establishes strong error estimates
and implies strong convergence with rate $1/2$. This strong convergence rate is well known
in the literature; see, e.g., \cite{hutzenthaler2020perturbation}.
The main contribution of Lemma \ref{g03} is to derive an explicit upper bound
which allows to explore its dependence on the dimension.
\begin{lemma}[Strong error]\label{g03}
Assume \cref{g01} and let $\startX\in\R^d$, 
$\startT\in[0,T)$, 
 $\theta\in \setCalP{\startT}{T}$.
Then it holds that 
\begin{align}\begin{split}
&\sup_{s\in[\startT,T]}\left(\E\!\left[\left\|X_{t_0,s }^{\startX}-Y_{t_0,s }^{\theta,{\startX}}\right\|^r\right]\right)^{\nicefrac{1}{r}} \\&\leq
 74\sqrt{2}(T\vee1)r^{3/2}b^2\exp\left((2+\tfrac{\rho}{2r})(T-\startT)\right)\left[\funcEta(\size{\theta})
\exp \left(e^{\alpha(T-\startT)}
\funcUo(\startX)\right)
\lyaV({\startX})\right]^{\frac{1}{2r}}
\sqrt{\size{\theta}}
.
\end{split}\end{align}

\end{lemma}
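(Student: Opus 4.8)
The plan is to run the standard It\^o--Gronwall strong-error argument for Euler-type discretisations of SDEs with a monotonicity assumption, keeping all constants explicit and feeding in the dimension-independent a~priori bounds from \cref{s05,r33}. First I would apply \cref{s05}, with the Lyapunov function taken to be $\lyaV$ and its exponent parameter set to $\beta-1$: its hypotheses \eqref{s01b}, \eqref{r05bx}, \eqref{r05x} follow (using $\lyaV\ge1$) from \eqref{s01}, \eqref{r05b}, \eqref{r05}, the bound $\lyaV(x)\le c(b^{3}h)^{-\kappaA}$ for $x\in\setD_{h}$ is part of \eqref{r32}, and $\kappaA\le p/(3(\beta-1)+4)=p/(3\beta+1)$ holds by hypothesis; since $c\ge1$ the constant $\rho$ of \eqref{r40b} dominates the one of \cref{s05}, so $\E[\lyaV(Y_{\startT,s}^{\theta,\startX})]\le e^{\rho(s-\startT)}\lyaV(\startX)$ for all $s\in[\startT,T]$. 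As $p\ge4r\beta$ and $\lyaV\ge1$, \eqref{r05} upgrades this to $\E[\|Y_{\startT,s}^{\theta,\startX}\|^{2r}]\le b^{2r}(e^{\rho(s-\startT)}\lyaV(\startX))^{1/2}$ and to analogous bounds for $\|\mu(Y_{\startT,s}^{\theta,\startX})\|^{2r}$, $\|\sigma(Y_{\startT,s}^{\theta,\startX})\|^{2r}$; the moments of $X_{\startT,s}^{\startX}$ are controlled similarly. I would also record the exponential moment estimates \eqref{r33a}--\eqref{r33b}, which will absorb the random weight appearing below and produce the factors $\funcEta(\size{\theta})$ and $\exp(e^{\alpha(T-\startT)}\funcUo(\startX))$.

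\textbf{It\^o formula for $\|X-Y\|^{2r}$ and defect splitting.} Put $E_{s}\defeq X_{\startT,s}^{\startX}-Y_{\startT,s}^{\theta,\startX}$, so $E_{\startT}=0$. Applying It\^o's formula on each mesh interval to $w\mapsto\sigma(z)w(1+\|\sigma(z)w\|^{2})^{-1}$ with $z$ frozen shows that $Y_{\startT,\cdot}^{\theta,\startX}$ is an It\^o process with drift $A_{u}=\1_{\setD_{\size{\theta}}}(Y_{\startT,\rdown{u}{\theta}}^{\theta,\startX})[\mu(Y_{\startT,\rdown{u}{\theta}}^{\theta,\startX})+\mathcal{R}^{\mu}_{u}]$ and diffusion $B_{u}=\1_{\setD_{\size{\theta}}}(Y_{\startT,\rdown{u}{\theta}}^{\theta,\startX})[\sigma(Y_{\startT,\rdown{u}{\theta}}^{\theta,\startX})+\mathcal{R}^{\sigma}_{u}]$, the taming corrections $\mathcal{R}^{\mu},\mathcal{R}^{\sigma}$ being dominated pathwise by powers of $\|\sigma(Y_{\startT,\rdown{u}{\theta}}^{\theta,\startX})(W_{u}-W_{\rdown{u}{\theta}})\|$. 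Then It\^o's formula for $\|E_{s}\|^{2r}=(\|E_{s}\|^{2})^{r}$, together with $\|(\sigma(X)-B)^{*}E\|^{2}\le\|E\|^{2}\|\sigma(X)-B\|^{2}$, bounds the drift of $\|E_{s}\|^{2r}$ by $r\|E_{s}\|^{2(r-1)}[2\langle E_{s},\mu(X)-A\rangle+(2r-1)\|\sigma(X)-B\|^{2}]$. Splitting $\mu(X)-A=[\mu(X)-\mu(Y)]+[\mu(Y)-A]$ and $\sigma(X)-B=[\sigma(X)-\sigma(Y)]+[\sigma(Y)-B]$, the ``$X$ versus $Y$'' parts are controlled by \eqref{r02} --- this is exactly where the coefficient $(2r-1)$ produced by the It\^o formula is what \eqref{r02} supplies, and it yields the Gronwall rate $\|E_{s}\|^{2r}[\tfrac{\funcUo(X)+\funcUo(Y)}{4T}+\tfrac{\funcUi(X)+\funcUi(Y)}{4}]$ --- while the discretisation/taming defects are treated as lower-order terms. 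On $\{Y_{\startT,\rdown{u}{\theta}}^{\theta,\startX}\in\setD_{\size{\theta}}\}$ one writes $\mu(Y_{u})-A_{u}=\mu(Y_{u})-\mu(Y_{\rdown{u}{\theta}})-\mathcal{R}^{\mu}_{u}$ and estimates it via the local Lipschitz bound \eqref{r05a}, $\|Y_{u}-Y_{\rdown{u}{\theta}}\|\le\|\mu(Y_{\rdown{u}{\theta}})\|\size{\theta}+\|\sigma(Y_{\rdown{u}{\theta}})(W_{u}-W_{\rdown{u}{\theta}})\|$, the bounds on $\|\mu\|,\|\sigma\|,\lyaV$ on $\setD_{\size{\theta}}$ from \eqref{r32}, and moments of Gaussian increments, obtaining a factor $\size{\theta}^{1/2}$ in $L^{2r}$; on $\{Y_{\startT,\rdown{u}{\theta}}^{\theta,\startX}\notin\setD_{\size{\theta}}\}$ one has $A_{u}=B_{u}=0$, $Y_{u}=Y_{\rdown{u}{\theta}}$, and bounds the term using $\|\mu(Y_{\rdown{u}{\theta}})\|^{2r}\vee\|\sigma(Y_{\rdown{u}{\theta}})\|^{2r}\le b^{2r}|\lyaV(Y_{\rdown{u}{\theta}})|^{2r\beta/p}\le b^{2r}|\lyaV(Y_{\rdown{u}{\theta}})|^{1/2}$ together with the moment bound of the first paragraph.

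\textbf{Stochastic Gronwall and H\"older.} With $\Lambda_{s}\defeq\int_{\startT}^{s}[K_{r}+\tfrac{\funcUo(X_{\startT,u}^{\startX})+\funcUo(Y_{\startT,u}^{\theta,\startX})}{4T}+\tfrac{\funcUi(X_{\startT,u}^{\startX})+\funcUi(Y_{\startT,u}^{\theta,\startX})}{4}]\,du$ ($K_{r}$ the constant, of order $r$, coming from the Young steps that absorb the defects into $\|E\|^{2r}$) and $f_{u}$ the collected defect terms, the previous step gives $d\|E_{s}\|^{2r}\le\|E_{s}\|^{2r}\,d\Lambda_{s}+f_{s}\,ds+d(\text{local martingale})_{s}$, hence $e^{-\Lambda_{s}}\|E_{s}\|^{2r}-\int_{\startT}^{s}e^{-\Lambda_{u}}f_{u}\,du$ is a local supermartingale and, after localisation and using $E_{\startT}=0$, $e^{-\Lambda}\le1$, $f\ge0$, one obtains $\E[e^{-\Lambda_{s}}\|E_{s}\|^{2r}]\le\E[\int_{\startT}^{s}f_{u}\,du]$. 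H\"older's inequality applied to $\|E_{s}\|^{r}=(e^{-\Lambda_{s}}\|E_{s}\|^{2r})^{1/2}(e^{\Lambda_{s}})^{1/2}$ then yields $\E[\|E_{s}\|^{r}]\le(\E[\int_{\startT}^{s}f_{u}\,du])^{1/2}(\E[e^{\Lambda_{s}}])^{1/2}$. I would bound $\E[e^{\Lambda_{s}}]$ by writing $e^{\Lambda_{s}}\le e^{K_{r}(T-\startT)}\exp(\tfrac{1}{4T}\int(\funcUo(X)+\funcUo(Y))\,du+\tfrac14\int(\funcUi(X)+\funcUi(Y))\,du)$, using Cauchy--Schwarz to separate the $X$- and $Y$-parts, Jensen's inequality in the form $\exp(\tfrac{1}{4T}\int_{\startT}^{s}g_{u}\,du)\le\tfrac{1}{s-\startT}\int_{\startT}^{s}\exp(\tfrac14 g_{u})\,du$ (valid since $g=\funcUo(\cdot)\ge0$ and $s-\startT\le T$) together with $\exp(\tfrac14\funcUo(\cdot))\le(\exp(e^{\alpha(T-\cdot)}\funcUo(\cdot)))^{1/4}$, $\tfrac14\funcUi(\cdot)\le e^{\alpha(T-\cdot)}\funcUi(\cdot)$, and finally \eqref{r33a}--\eqref{r33b}, reaching a numerical constant times $e^{K_{r}(T-\startT)}\funcEta(\size{\theta})\exp(e^{\alpha(T-\startT)}\funcUo(\startX))$; and I would bound $\E[\int_{\startT}^{s}f_{u}\,du]$ by inserting the defect estimates, the contribution from $\setD_{\size{\theta}}$ carrying $\size{\theta}$ and that from its complement being controlled via $\E[\lyaV(Y)]\le e^{\rho(s-\startT)}\lyaV(\startX)$, to get $\E[\int_{\startT}^{s}f_{u}\,du]\le C r^{3}b^{4}(T\vee1)^{2}e^{\rho(T-\startT)/r}\lyaV(\startX)^{1/r}\size{\theta}$. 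Taking square roots, then $r$-th roots, and collecting the exponents ($e^{2(T-\startT)}$ from $e^{K_{r}(T-\startT)/(2r)}$, $e^{\rho(T-\startT)/(2r)}$ from the moments of $Y$), the powers $\funcEta(\size{\theta})^{1/(2r)}$, $\exp(e^{\alpha(T-\startT)}\funcUo(\startX))^{1/(2r)}$, $\lyaV(\startX)^{1/(2r)}$, and the algebraic prefactor $r^{3/2}b^{2}(T\vee1)$ then yields the asserted bound.

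\textbf{Main obstacle.} The crux is the second paragraph: one must show that, after being passed through the weight $\|E\|^{2(r-1)}$ and Young's inequality, all discretisation and taming defects contribute only $O(\size{\theta})$ to $\E[\int f_{u}\,du]$ with a constant polynomial in $r,b$ and independent of the dimension $d$. The genuinely delicate case is the regime in which $Y$ has left $\setD_{\size{\theta}}$ and frozen; this is precisely where the dimension-independent moment bound of \cref{s05} and, for the weight $\Lambda$, the exponential moment bound of \cref{r33} are indispensable, as the dimension-dependent estimates available in the literature would ruin the bound.
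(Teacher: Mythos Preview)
Your outline tracks the paper's route closely --- you write $Y$ as an It\^o process, bound the defects $a_t-\mu(Y_t)$, $b_t-\sigma(Y_t)$ in $L^{2r}$ via \cref{s05} and \eqref{r05a}, and control the exponential Gronwall weight via \cref{r33}; the paper packages the core It\^o--Gronwall perturbation step into a citation of \cite[Corollary~2.12]{HJ20} rather than doing it by hand. But your treatment of the ``frozen'' regime, which you yourself flag as the main obstacle, has a real gap that is not just a matter of constants.

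On $\{Y_{\startT,\rdown{u}{\theta}}^{\theta,\startX}\notin\setD_{\size{\theta}}\}$ one has $A_u=B_u=0$, so after Young the defect $f_u$ contains $\|\mu(Y_u)\|^{2r}+\|\sigma(Y_u)\|^{2r}$; bounding this by $b^{2r}|\lyaV(Y_u)|^{1/2}$ and invoking $\E[\lyaV(Y)]\le e^{\rho T}\lyaV(\startX)$ yields \emph{no} smallness in $\size{\theta}$, so the claim $\E[\int f_u\,du]\le C\size{\theta}$ is unjustified. Even if you supply the missing probability bound $\P[Y_{\rdown{u}{\theta}}\notin\setD_{\size{\theta}}]\le C\size{\theta}^{r}$ --- which requires the condition $\funcUo(x)\le r\log(1/h)$ from \eqref{r32}, Markov's inequality, and \eqref{r33a}, none of which you invoke --- H\"older inside the integral with the available moments of $\lyaV(Y)$ gives at best $\E[\int f_u\,du]\le C\size{\theta}^{r/2}$, hence $(\E[\int f])^{1/(2r)}\le C\size{\theta}^{1/4}$: the rate is lost. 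The paper's resolution is to introduce the stopping time $\stTau{\startT}{\startX}{\theta}=\inf(\{\rdown{s}{\theta}:Y_{\startT,\rdown{s}{\theta}}^{\theta,\startX}\notin\setD_{\size{\theta}}\}\cup\{T\})$ and split the \emph{outer} expectation as $\E[\1_{\setD_{\size{\theta}}}(Y_{\startT,\rdown{T}{\theta}}^{\theta,\startX})\|E_s\|^r]+\E[\1_{\R^d\setminus\setD_{\size{\theta}}}(Y_{\startT,\rdown{T}{\theta}}^{\theta,\startX})\|E_s\|^r]$. The first piece is bounded by running the perturbation argument only up to $s\wedge\stTau{\startT}{\startX}{\theta}$, so that every defect carries the indicator $\1_{\{u<\stTau{\startT}{\startX}{\theta}\}}$ and is $O(\sqrt{\size{\theta}})$ in $L^{2r}$; the second by $\P[\stTau{\startT}{\startX}{\theta}<T]^{1/2}(\E[\|E_s\|^{2r}])^{1/2}$, with $(\P[\stTau{\startT}{\startX}{\theta}<T])^{1/(2r)}\le C\sqrt{\size{\theta}}$ coming from Markov and \eqref{r33a}; see \eqref{r26} and \eqref{r11b}. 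A related point: \eqref{r33a} only controls $\exp\bigl(\int\1_{\setD_{\size{\theta}}}(Y_{\startT,\rdown{t}{\theta}}^{\theta,\startX})\funcUi(Y_{\startT,t}^{\theta,\startX})\,dt\bigr)$, not $\exp\bigl(\int\funcUi(Y_{\startT,t}^{\theta,\startX})\,dt\bigr)$, so your bound on $\E[e^{\Lambda_s}]$ cannot invoke \cref{r33} for the $Y$-part as written; stopping at $\stTau{\startT}{\startX}{\theta}$ fixes this automatically.
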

\begin{proof}[Proof of \cref{g03}]
\renewcommand{\unit}[2]{\mathbf{e}_{#2}}
\newcommand{\unitT}[2]{{\mathbf{e}}_{#2}^{\mathsf{T}}}
\newcommand{\id}{\mathrm{Id}_{\R^d}}Throughout this proof
let $\id\colon\R^d\to\R^d $ be the identity on $\R^d$,
let $\psi\colon\R^d\to \R^d$ be the function which satisfies for all $y\in\R^d$ that
\begin{align}\label{r19b}
\psi(y)=\nfracXB{y}{1+\|y\|^2}
,\end{align}
let $\unit{m}{k}\in \R^{m\times 1}$, $k\in[1,m]\cap\N$, be the $m$-dimensional standard basis vectors of $\R^{m\times1}$, let
$\unitT{m}{k}$, $k\in[1,m]\cap\N$, satisfy for all $k\in[1,m]\cap\N$ that
$\unitT{m}{k}\in\R^{1\times m}$ is the transposed matrix of $\unit{m}{k}$,
let 
$Z\colon [\startT,T]\times\Omega\to \R^d$
 be the function which satisfies for all $t\in [\startT,T]$ that
\begin{align}
Z_t= \sigma(Y_{\startT,\rdown{t}{\theta}}^{\theta,\startX})(W_{t}-W_{\rdown{t}{\theta}}),\label{r19}
\end{align}
let $\stTau{\startT}{\startX}{\theta}\colon \Omega\to[\startT,T]$ satisfy that
\begin{align}
\stTau{\startT}{\startX}{\theta}= \inf(\{\rdown{s}{\theta}\colon s\in [\startT,T], Y_{\startT,\rdown{s}{\theta}}^{\theta,\startX}\notin {\setD}_{\size{\theta}}\} \}\cup\{T\}),\label{r23}
\end{align}
and
let $a\colon [\startT,T]\times\Omega\to \R^d$, $b\colon [\startT,T]\times\Omega\to\R^{d\times m}$  be the functions which
satisfy for all $t\in [\startT,T]$ that
\begin{align}\begin{split}\label{r18}
&a_{t}=\mu(Y_{\startT,\rdown{t}{\theta}}^{\theta,\startX})+\sum_{k=1}^{m}\left[ \frac{1}{2}
[(\totalD^2\psi)(Z_s) ]\left(\sigma(Y_{\startT,\rdown{t}{\theta}}^{\theta,\startX})\unit{m}{k},\sigma(Y_{\startT,\rdown{t}{\theta}}^{\theta,\startX})\unit{m}{k}\right)\right]
\quad\text{and}\quad\\
&b_{t}=\sum_{k=1}^{m}
[(\totalD\psi)({Z}_s)]\left(\sigma(Y_{\startT,\rdown{t}{\theta}}^{\theta,\startX})\unit{m}{k}\right)\unitT{m}{k}.
\end{split}\end{align}
The fact that
$\forall\, \mathfrak{a}\in\R^{d\times m}\colon \mathfrak{a}=\sum_{k=1}^{m}\mathfrak{a}\unit{m}{k}\unitT{m}{k}$ and the fact that
\begin{align}\label{r30}
\forall\, s\in[\startT,T],t\in (\rdown{s}{\theta},s)\colon \rdown{t}{\theta}=\rdown{s}{\theta}\end{align} 
then show that for all $s\in[\startT,T]$ it holds $\P$-a.s. that
\begin{align}\label{r29}
Z_s= \sum_{k=1}^{m}
\int_{\rdown{s}{\theta}}^{s}\bigl(\sigma(Y_{\startT,\rdown{t}{\theta}}^{\theta,\startX})\unit{m}{k}\bigr)\unitT{m}{k}\,dW_t\quad\text{and}\quad
\mu(Y_{\startT,\rdown{s}{\theta}}^{\theta,\startX})(s-\rdown{s}{\theta})=
\int_{\rdown{s}{\theta}}^{s}\mu(Y_{\startT,\rdown{t}{\theta}}^{\theta,\startX})\,dt.
\end{align}%
Furthermore, 
\eqref{r23} 
and \eqref{r30} imply for all $t\in[\startT,T)$ that 
\begin{align}
\left\{Y_{\startT,\rdown{t}{\theta}}^{\theta,\startT}\in {\setD}_{\size{\theta}}\right\}=\left\{t< \stTau{\startT}{\startX}{\theta}\right\}.\label{r29b}
\end{align}%
This, \eqref{g01bb}, \eqref{r23}, \eqref{r19b},
It\^o's formula, \eqref{r19},  \eqref{r18}, and \eqref{r29} show that for all  $s\in[\startT,T]$ it holds $\P$-a.s. that
\begin{align}\begin{split}
&Y_{\startT,s}^{\theta,\startX} 
= Y_{\startT,\rdown{s}{\theta}}^{\theta,\startX}+\1_{{\setD}_{\size{\theta}}}(Y_{\startT,\rdown{s}{\theta}}^{\theta,\startX})\left[\mu(Y_{\startT,\rdown{s}{\theta}}^{\theta,\startX})(s-\rdown{s}{\theta})+\psi({Z}_{s})\right]\\
&= 
Y_{\startT,\rdown{s}{\theta}}^{\theta,\startX}
+\sum_{k=1}^{m}\int_{\rdown{s}{\theta}}^{s}
\1_{{\setD}_{\size{\theta}}}(Y_{\startT,\rdown{t}{\theta}}^{\theta,\startX})
[(\totalD\psi)({Z}_{t})]\left(\sigma(Y_{\startT,\rdown{t}{\theta}}^{\theta,\startX})\unit{m}{k}\right)\unitT{m}{k}\,dW_t
\\& +
\int_{\rdown{s}{\theta}}^{s}
\1_{{\setD}_{\size{\theta}}}(Y_{\startT,\rdown{t}{\theta}}^{\theta,\startX})\left[
\mu(Y_{\startT,\rdown{t}{\theta}}^{\theta,\startX})+\frac{1}{2}\sum_{k=1}^{m}
[(\totalD^2\psi)({Z}_{t} )]\left(\sigma(Y_{\startT,\rdown{t}{\theta}}^{\theta,\startX})\unit{m}{k},\sigma(Y_{\startT,\rdown{t}{\theta}}^{\theta,\startX})\unit{m}{k}\right)\right]dt\\
%= Y_{\startT,\rdown{s}{\theta}}^{\theta,\startX} 
% +\int_{\rdown{s}{\theta}}^s
%\1_{{\setD}_{\size{\theta}}}(Y_{\startT,\rdown{t}{\theta}}^{\theta,\startX})
%a_s\,dt+\int_{\rdown{s}{\theta}}^s\1_{{\setD}_{\size{\theta}}}(Y_{\startT,\rdown{t}{\theta}}^{\theta,\startX})b_s\,dW_t.
&
= Y_{\startT,\rdown{s}{\theta}}^{\theta,\startX} 
 +\int_{\rdown{s}{\theta}}^s
\1_{\left\{t<\stTau{\startT}{\startX}{\theta}\right\}}
a_t\,dt+\int_{\rdown{s}{\theta}}^s\1_{\left\{t<\stTau{\startT}{\startX}{\theta}\right\}}b_t\,dW_t.
\end{split}\end{align}
An induction argument then shows that
for all $s\in[\startT,T]$ it holds $\P$-a.s. that
\begin{align}\begin{split}
Y_{\startT,s}^{\theta,\startX} 
&= 
\startX +\int_{\startT}^s\1_{\left\{t<\stTau{\startT}{\startX}{\theta}\right\}}
a_t\,dt+
\int_{\startT}^s\1_{\left\{t<\stTau{\startT}{\startX}{\theta}\right\}}
b_t\,dW_t .\end{split}
\label{r28}
\end{align}
Next, \eqref{r05}, Jensen's inequality, a Lyapunov-type estimate (see, e.g., \cite[Lemma~2.2]{CHJ14}) combined with \eqref{g01a} and \eqref{r05b}, 
the fact that $c\leq \rho$, 
and \cref{s05} (applied with $\betaA\defeq \betaT-1$ in the notation of \cref{s05}) combined with 
\eqref{s01}--\eqref{r05}, \eqref{r40b},  the assumption that 
$ 
\kappaA\in [0, p/(3\betaT+1)]$, and the fact that
 $\forall\,h\in(0,T], x\in {\setD}_{h}\colon \lyaV(x)\leq c(b^3h)^{-\kappaA}$ (see \eqref{r32})
imply that for all $t\in[\startT,T]$, $q\in [1,\infty
)$ with $\betaT q\leq p$ it holds that 
\begin{align}\begin{split}
&\left(
\E\!\left[ \left\|\mu(Y_{\startT,t}^{\theta,\startX})\right\|^{q}\right]\right)^{\frac{1}{q}}\vee
\left(\E\!\left[\left[
\left\|\sigma(Y_{\startT,t}^{\theta,\startX})\right\|_{\HS}^{2}\vee1\right]^{q}\right]\right)^{\frac{1}{q}}\vee
\left(\E\!\left[\bigl\|Y_{t_0,t}^{\theta,{\startX}}\bigr\|^{q}\right]\right)^{\frac{1}{q}}
\\
&\vee\left(
\E\!\left[ \left\|\mu(X_{\startT,t}^{\startX})\right\|^{q}\right]\right)^{\frac{1}{q}}\vee
\left(\E\!\left[\left[
\left\|\sigma(X_{\startT,t}^{\startX})\right\|_{\HS}^{2}\vee1\right]^{q}\right]\right)^{\frac{1}{q}}\vee
\left(\E\Bigl[
\left\|X_{t_0,t}^{\startX}\right\|^{q}\Bigr]\right)^{\frac{1}{q}}
\\
&
\leq b \left[
\left(
\E\!\left[\bigl|
\lyaV(Y_{\startT,t}^{\theta,\startX})\bigr|^{\frac{{\betaT} q}{p}}\right]\right)^{\frac{1}{q}}
\vee
\left(
\E\!\left[\left|
\lyaV(X_{\startT,t}^{\startX})\right|^{\frac{{\betaT} q}{p}}\right]\right)^{\frac{1}{q}}\right]\\
&
\leq b\left[
\left(
\E\bigl[
\lyaV(Y_{\startT,t}^{\theta,\startX})\bigr]\right)^{\frac{\betaT}{p}}
\vee 
\Bigl(
\E\bigl[
\lyaV(X_{\startT,t}^{\startX})\bigr]\Bigr)^{\frac{\betaT}{p}}\right]
\leq be^{\rho{\betaT} (t-\startT)/p}(\lyaV(\startX))^{{\betaT} /p}.\label{r20}
\end{split}\end{align}
Next, 
\eqref{g01bb} and the fact that Brownian motions have independent increments prove for all $t\in [\startT,T]$ that
$\sigma(Y_{\startT,\rdown{t}{\theta}}^{\theta,\startX})$ and
$W_{t}-W_{\rdown{t}{\theta}}$ are independent.
This,
\eqref{r19}, 
the Burkholder-Davis-Gundy inequality (see, e.g., Lemma~7.7 in~\cite{dz92}), and
 \eqref{r20} 
combined with the fact that $\forall\,t\in \R\colon |t|\leq |t|^2\vee1$
imply that for all   $t\in[\startT,T]$, $q\in [2,\infty)$  with $\betaT  q\leq p$
it holds that
\begin{align}\label{r18c}\begin{split}
&\left(\E\!\left[\left\| Z_{t}\right\|  ^{q} \right]\right)^{\nicefrac{1}{q}}
\leq 
\left(\E\!\left[\left\|  \sigma(Y_{\startT,\rdown{t}{\theta}}^{\theta,\startX})(W_{t}-W_{\rdown{t}{\theta}})\right\|^{q} \right]\right)^{\frac{1}{q}}\\
&=
\left(\E\!\left[\left\|  \sigma(Y_{\startT,\rdown{t}{\theta}}^{\theta,\startX})\right\|^{q} \right]\right)^{\frac{1}{q}}
\left(\E\!\left[\left\|  W_{t}-W_{\rdown{t}{\theta}}\right\|^{q}\right]\right)^{\frac{1}{q}}\\
&
\leq be^{\rho\betaT  (t-\startT)/p}(\lyaV(\startX))^{\betaT  /p}\sqrt{\size{\theta}}
\sqrt{q(q-1)/2 }.
\end{split}\end{align}
This  and the fact that $\forall\,y\in\R^d\colon \|(\totalD^2\psi)(y)\|_{\mlinear{2}{\R^d}{\R^d}}\leq 14(1\wedge\|y\|)$ (see \cite[Lemma~3.1]{HJ20}) imply that for all   $t\in[\startT,T]$, $q\in [2,\infty)$  with $\betaT  q\leq p$
it holds that
\begin{align}\begin{split}
&\left(\E\!\left[
\left\| 
(\totalD^2\psi)(Z_{t})\right\|_{\mlinear{2}{\R^d}{\R^d}}^{q}\,ds\right]\right)^{\frac{1}{q}}
%\leq \left[14\sup_{t\in[0,T]}
%\left(\E\!\left[
%\left\| 
%Z_{t}\right\|^{q}\right]\right)^{\nicefrac{1}{q}}\right]
\leq 14 be^{\rho\betaT (t-\startT)/p}(\lyaV(x))^{\betaT /p}\sqrt{\size{\theta}}
\sqrt{q(q-1)/2 }.
\end{split}\end{align}
This, the fact that for all $\mathfrak{m}\in\mlinear{2}{\R^d}{\R^d}$, $\mathfrak{a}\in \R^{d\times m}$ it holds that
\begin{align}
\sum_{k=1}^{m}
\|\mathfrak{m}(\mathfrak{a} \unit{m}{k},\mathfrak{a}\unit{m}{k})\|\leq 
\sum_{k=1}^{m}\left[
\|\mathfrak{m} \|_{\mlinear{2}{\R^d}{\R^d}}\|\mathfrak{a}\unit{m}{k}\|^2\right]=
\|\mathfrak{m}\|_{\mlinear{2}{\R^d}{\R^d}}\|\mathfrak{a}\|^2_{\HS},
\end{align}
 H\"older's inequality, and \eqref{r20} 
%(\blue{with $q\defeq2q\in [2,\infty)$}) 
then
 prove that  for all   $t\in[\startT,T]$, $q\in [1,\infty)$  with $  2\betaT q\leq p$ it holds that
\begin{align}\begin{split}
& \left(\E\!\left[
\left\|\frac{1}{2} \sum_{k=1}^{m}
[(\totalD^2\psi)(Z_{t}) ]\left(\sigma(Y_{\startT,\rdown{t}{\theta}}^{\theta,\startX})\unit{m}{k},\sigma(Y_{\startT,\rdown{t}{\theta}}^{\theta,\startX}\unit{m}{k})\right)\right\|^{q}\right]\right)^{\frac{1}{q}}\\
&\leq \frac{1}{2}
\left[
\left(\E\!\left[
\left\| 
(\totalD^2\psi)(Z_{t})\right\|_{\mlinear{2}{\R^d}{\R^d}}^{q}
\left\|\sigma(Y_{\startT,\rdown{t}{\theta}}^{\theta,\startX})\right\|_{\HS}^{2q}\right]\right)^{\frac{1}{q}}\right]\\
&\leq \frac{1}{2}
\left(\E\!\left[
\left\| 
(\totalD^2\psi)(Z_{t})\right\|_{\mlinear{2}{\R^d}{\R^d}}^{2q}\right]\right)^{\frac{1}{2q}}
\left(\E\!\left[
\left\|\sigma(Y_{\startT,\rdown{t}{\theta}}^{\theta,\startX})\right\|_{\HS}^{4q}\right]\right)^{\frac{1}{2q}}\\
&\leq \frac{1}{2}
\left[14 be^{\rho\betaT (t-\startT)/p}(\lyaV(\startX))^{\betaT /p}\sqrt{\size{\theta}}
\sqrt{q(2q-1)}\right]\left[be^{\rho\betaT (t-\startT)/p}(\lyaV(\startX))^{\betaT /p}\right]\\
&\leq 7b^2e^{2\rho\betaT (t-\startT)/p}(\lyaV(\startX))^{2\betaT /p}\sqrt{\size{\theta}}
\sqrt{q(2q-1) }.
\end{split}\label{r21}\end{align}
Next, 
\eqref{g01bb}, \eqref{r19b}, \eqref{r19}, the fact that
$\forall\,y\in\R^d\colon \|\psi(y) \|\leq \|y\|$, \eqref{r20}, and \eqref{r18c} show 
that for all $t\in[\startT,T]$, 
$q\in [2,\infty)$  with $\betaT q\leq p$ it holds
that
\begin{align}\begin{split}
&\left(
\E\!\left[\left\|
Y_{\startT,\rdown{t}{\theta}}^{\theta,{\startX}}-
Y_{\startT,t}^{\theta,{\startX}}\right\|^{q}\right]\right)^{\frac{1}{q}}
\leq
\left(
\E\!\left[ \left\|\mu(Y_{\startT,\rdown{t}{\theta}}^{\theta,{\startX}})(t-\rdown{t}{\theta})+\psi({Z}_{t})\right\|^{q}\right]\right)^{\frac{1}{q}}
\\
&\leq 
\left(
\E\!\left[ \left\|\mu(Y_{\startT,\rdown{t}{\theta}}^{\theta,{\startX}})\right\|^{q}\right]\right)^{\frac{1}{q}}{\size{\theta}}+\left(\E\!\left[\left\| Z_{t}\right\|  ^{q}\right]\right)^{\nicefrac{1}{q}}
\\
&\leq be^{\rho\betaT (t-\startT)/p}(\lyaV({\startX}))^{\betaT /p}  \sqrt{T\size{\theta}}+
be^{\rho\betaT (t-\startT)/p}(\lyaV(\startX))^{\betaT /p}\sqrt{\size{\theta}}
\sqrt{q(q-1)/2 }\\
&\leq b(\sqrt{T}+1)e^{\rho\betaT (t-\startT)/p}(\lyaV(\startX))^{\betaT /p}\sqrt{\size{\theta}}\sqrt{q(q-1)/2 }\\
&\leq 2b(T\vee1)e^{\rho\betaT (t-\startT)/p}(\lyaV(\startX))^{\betaT /p}\sqrt{\size{\theta}}\sqrt{q(q-1)/2 }.
\end{split}\end{align}
This, \eqref{r05a}, H\"older's inequality,  the triangle inequality, and \eqref{r20} 
imply  that for all $t\in[\startT,T]$, 
$q\in [1,\infty)$  with $ 2\betaT q\leq p$ it holds
that
\begin{align}\begin{split}
&\left(
\E\!\left[
\left\|
\mu(Y_{\startT,\rdown{t}{\theta}}^{\theta,{\startX}})-
\mu(Y_{\startT,t}^{\theta,{\startX}})\right\|^{q}\right]\right)^{\frac{1}{q}}\vee\left(
\E\!\left[
\left\|\sigma(Y_{\startT,\rdown{t}{\theta}}^{\theta,{\startX}})-\sigma(Y_{\startT,t}^{\theta,{\startX}})\right\|_{\HS}^{q}\right]\right)^{\frac{1}{q}}\\
&
\leq 
b\left[
\left(
\E\!\left[\left|
V(Y_{\startT,\rdown{t}{\theta}}^{\theta,{\startX}})\right|^{\frac{\beta 2q}{p}}\right]\right)^{\frac{1}{2q}}+
\left(
\E\!\left[\left|
V(Y_{\startT,t}^{\theta,{\startX}})\right|^{\frac{\beta 2q}{p}}\right]\right)^{\frac{1}{2q}}\right]\left(
\E\!\left[
\left\|
Y_{\startT,\rdown{t}{\theta}}^{\theta,{\startX}}-
Y_{\startT,t}^{\theta,{\startX}}\right\|^{2q}\right]\right)^{\frac{1}{2q}}\\
&\leq b\left[2e^{\rho\betaT (t-\startT)/p}(\lyaV({\startX}))^{\betaT /p} \right]
\left[
2b(T\vee1)e^{\rho\betaT (t-\startT)/p}(\lyaV({\startX}))^{\betaT /p}  \sqrt{\size{\theta}}
\sqrt{q(2q-1) }\right]\\
&= 
4b^2(T\vee1)e^{2\rho\betaT (t-\startT)/p}(\lyaV({\startX}))^{2\betaT /p}  \sqrt{\size{\theta}}
\sqrt{q(2q-1) }.\label{r22}
\end{split}\end{align}
This, \eqref{r18}, the triangle inequality, and
\eqref{r21} show 
that for all $t\in[\startT,T]$, 
$q\in [1,\infty)$  with $2\betaT q\leq p$ it holds
that
\begin{align}\begin{split}
&\left[
\E\!\left[
\left\|a_{t}-\mu(Y^{\theta,\startX}_{\startT,t})\right\|^{q}\right]\right]^{\frac{1}{q}}
\leq 
\left(
\E\!\left[
\left\|
\mu(Y_{\startT,\rdown{t}{\theta}}^{\theta,{\startX}})-
\mu(Y_{\startT,t}^{\theta,{\startX}})\right\|^{q}\right]\right)^{\frac{1}{q}}\\
&\qquad\qquad\qquad
+ \left(\E\!\left[
\left\|\frac{1}{2} \sum_{k=1}^{m}
[(\totalD^2\psi)(Z_{t}) ]\left(\sigma(Y_{\startT,\rdown{t}{\theta}}^{\theta,{\startX}})\unit{m}{k},\sigma(Y_{\startT,\rdown{t}{\theta}}^{\theta,{\startX}})\unit{m}{k}\right)\right\|^{q}\right]\right)^{\frac{1}{q}}\\
&\leq 11(T\vee1)b^2e^{2\rho\betaT (t-\startT)/p}(\lyaV({\startX}))^{2\betaT /p}  \sqrt{\size{\theta}}
\sqrt{q(2q-1) }.
\end{split}\label{r20b}
\end{align}
Next, for all $\mathfrak{m}\in\mlinear{1}{\R^d}{\R^d} $, $\mathfrak{a}\in\R^{d\times m}$ it holds that
\begin{align}
\sum_{k=1}^{m}\left[
\mathfrak{m}(\mathfrak{a}\unit{m}{k})\unitT{m}{k}\right]= 
\begin{bmatrix}
%|&|&&|\\
\mathfrak{m}(\mathfrak{a}\unit{m}{1})&\mathfrak{m}(\mathfrak{a}\unit{m}{2})&\ldots&\mathfrak{m}(\mathfrak{a}\unit{m}{m})\\
%|&|&&|
\end{bmatrix}\in \R^{d\times m}.
\end{align}
This shows  for all $\mathfrak{m}\in\mlinear{1}{\R^d}{\R^d} $, $\mathfrak{a}\in\R^{d\times m}$  that
\begin{align}
\left\|
\sum_{k=1}^{m}
\mathfrak{m}(\mathfrak{a}\unit{m}{k})\unitT{m}{k}\right\|_{\HS}^2=\sum_{k=1}^{m}\|\mathfrak{m}(\mathfrak{a}\unit{m}{k})\|^2\leq \sum_{k=1}^{m}\|\mathfrak{m}\|^2_{\mlinear{1}{\R^d}{\R^d}}\|\mathfrak{a}\unit{m}{k}\|^2= \|\mathfrak{m}\|^2_{\mlinear{1}{\R^d}{\R^d}}
\|\mathfrak{a}\|^2_{\HS}.
\end{align}
This,
\eqref{r18}, the fact that $\forall\,\mathfrak{a}\in\R^{d\times m}\colon \sum_{k=1}^{m}\mathfrak{a}\unit{m}{k}\unitT{m}{k}=\mathfrak{a}$,  the fact that
\begin{align}\forall\,y\in\R^d\colon\|\totalD \psi(y)-\id\|_{\mlinear{1}{\R^d}{\R^d}}\leq 3\left(\|y\|\wedge1\right)^2\leq 3\|y\|
\end{align}
 (see \cite[Lemma~3.1]{HJ20}), H\"older's inequality, \eqref{r18c}, and \eqref{r20}  
 imply that
for all   $t\in[\startT,T]$, $q\in [1,\infty)$  with $2\betaT q\leq p$  it holds that
\begin{align}\begin{split}
&\left(
\E\!\left[
\left\|b_{t}-\sigma(Y_{\startT,\rdown{t}{\theta}}^{\theta,{\startX}})\right\|_{\HS}^{q}\right]\right)^{\frac{1}{q}}= \left(
\E\!\left[\left\|
\sum_{k=1}^{m}
\left[(\totalD\psi)({Z}_{t})-\id \right]\left(\sigma(Y_{\startT,\rdown{t}{\theta}}^{\theta,{\startX}})\unit{m}{k}\right)\unitT{m}{k}\right\|_{\HS}^q\right]\right)^{\frac{1}{q}}\\
&\leq 
\left(
\E\!\left[\left[
\left\|(\totalD\psi)({Z}_{t})-\id\right\|_{\mlinear{1}{\R^d}{\R^d}}
\left\|\sigma(Y_{\startT,\rdown{t}{\theta}}^{\theta,{\startX}})\right\|_{\HS}
\right]^q\right]\right)^{\frac{1}{q}}\\
&
\leq 
\left(
\E\!\left[\left[3
\left\|Z_{t}\right\|
\left\|\sigma(Y_{\startT,\rdown{t}{\theta}}^{\theta,{\startX}})\right\|_{\HS}
\right]^q\right]\right)^{\frac{1}{q}}
\leq 3
\left(
\E\!\left[
\left\| Z_{t} \right\|
^{2q}\right]\right)^{\frac{1}{2q}}
\left(
\E\!\left[
\left\|\sigma(Y_{\startT,\rdown{t}{\theta}}^{\theta,{\startX}})\right\|_{\HS}
^{2q}\right]\right)^{\frac{1}{2q}}\\
&\leq 3\left[be^{\rho\betaT (t-\startT)/p}(\lyaV({\startX}))^{\betaT /p}\sqrt{\size{\theta}}
\sqrt{q(2q-1)}\right]\left[be^{\rho\betaT (t-\startT)/p}(\lyaV({\startX}))^{\betaT /p}\right]\\
&= 3b^2e^{2\rho\betaT (t-\startT)/p}(\lyaV({\startX}))^{2\betaT /p}\sqrt{\size{\theta}}
\sqrt{q(2q-1)}.
\end{split}
\end{align}
This, the triangle inequality, and \eqref{r22}  show that for all   $t\in[\startT,T]$, $q\in [1,\infty)$  with $2\betaT q\leq p$ it holds that 
\begin{align}
&\left(
\E\!\left[
\|b_{t}-\sigma(Y_{\startT,t}^{\theta,{\startX}})\|_{\HS}^{q}\right]\right)^{\frac{1}{q}}\leq 
\left(
\E\!\left[
\left\|b_{t}-\sigma(Y_{\startT,\rdown{t}{\theta}}^{\theta,{\startX}})\right\|_{\HS}^{q}\right]\right)^{\frac{1}{q}}
+\left(
\E\!\left[
\left\|\sigma(Y_{\startT,\rdown{t}{\theta}}^{\theta,{\startX}})-\sigma(Y_{\startT,t}^{\theta,{\startX}})\right\|_{\HS}^{q}\right]\right)^{\frac{1}{q}}
\nonumber \\
&
\leq 
7(T\vee1)b^2e^{2\rho\betaT (t-\startT)/p}(\lyaV({\startX}))^{2\betaT /p}\sqrt{\size{\theta}}
\sqrt{q(2q-1)}.\label{r20c}
\end{align}
Next, observe that Jensen's inequality and Tonelli's theorem imply  for all
 $s\in (\startT,T]$, $\mathfrak{X}\in \{(X^{\startX}_{\startT,t})_{t\in[\startT,T]},(Y^{\theta,\startX}_{\startT,t})_{t\in [\startT,T]}\}$ that
\begin{align}\begin{split}
&
\E\! \left[\exp \left(\int_{\startT}^{s\wedge\stTau{\startT}{\startX}{\theta}}
\frac{\funcUo(\mathfrak{X}_{t})}{T}\,dt \right) \right]
\leq 
\E\! \left[\exp \left(\frac{1}{s-\startT}\int_{\startT}^{s}
\funcUo(\mathfrak{X}_{t})\,dt \right) \right]\\
&
\leq 
\E\! \left[\frac{1}{s-\startT}\int_{\startT}^{s}\exp \left(
\funcUo(\mathfrak{X}_{t}) \right) dt\right]
=
\frac{1}{s-\startT}\int_{\startT}^{s}
\E\! \left[\exp \left(
\funcUo(\mathfrak{X}_{t})\right) \right]
dt
\leq 
\sup_{t\in [\startT,T]}\E\! \left[\exp \left(
\funcUo(\mathfrak{X}_{t}) \right) \right].
\end{split}
\end{align}
This, \eqref{r02},  H\"older's inequality, \eqref{r29b}, the fact that
$\min\{\funcUo,\funcUi\}\geq 0$,
and \cref{r33} imply 
that 
{\allowdisplaybreaks\begin{align}
&
\sup_{s\in[\startT,T]}\left|\E\! \left[\exp\! \left(
\int_{\startT}^{s\wedge\stTau{\startT}{\startX}{\theta} }\left.2r\left[\tfrac{
{\left\langle x-y, \mu(x)-\mu(y)\right\rangle+(2r-1) \|\sigma(x)-\sigma(y)\|_{\HS}^2}}{\|x-y\|^2}\right]^+
\right|_{\substack{x=X_{\startT,t}^{\startX},y=Y_{\startT,t}^{\theta,\startX}}}
dt
\right)\right]\right|^{\frac{1}{2r}}
\nonumber \\
&\leq \sup_{s\in[\startT,T]}  \left(
\E\! \left[\exp \left(\int_{\startT}^{s\wedge\stTau{\startT}{\startX}{\theta}}
\frac{\funcUo(X_{\startT,t}^{\startX})+\funcUo(Y_{\startT,t}^{\theta,\startX})}{4T}+
\frac{\funcUi(X_{\startT,t}^{\startX})+\funcUi(Y_{\startT,t}^{\theta,\startX})}{4}\,dt \right) \right]\right)^{\frac{1}{2r}}\nonumber \\
&\leq\sup_{s\in[\startT,T]} \vastl{25pt}{[}\left(
\E\! \left[\exp \left(\int_{\startT}^{s}
\frac{\funcUo(X_{\startT,t}^{\startX})}{T}\,dt \right) \right]\right)^{\frac{1}{8r}}
\left(
\E\! \left[\exp \left(\int_{\startT}^{s}
\frac{\funcUo(Y_{\startT,t}^{\theta,\startX})}{T}\,dt \right) \right]\right)^{\frac{1}{8r}}\nonumber \\
&\qquad\quad
\cdot \left(
\E\! \left[\exp \left(\int_{\startT}^{s}
\funcUi(X_{\startT,t}^{\startX})\,dt \right) \right]\right)^{\frac{1}{8r}}
\left(
\E\! \left[\exp \left(\int_{\startT}^{s}\1_{{\setD}_{\size{\theta}}}(Y_{\startT,\rdown{t}{\theta}}^{\theta,\startX})\funcUi(Y_{\startT,t}^{\theta,\startX})\,dt \right) \right]\right)^{\frac{1}{8r}}\vastr{25pt}{]}\nonumber \\
&\leq 
\left(\sup_{s\in[\startT,T]}
\E\! \left[\exp \left(\funcUo(Y_{\startT,{s}}^{\theta,\startX})+\int_{\startT}^{s}\1_{{\setD}_{\size{\theta}}}(Y_{\startT,\rdown{t}{\theta}}^{\theta,\startX})\funcUi(Y_{\startT,t}^{\theta,\startX})\,dt \right) \right]\right)^{\frac{1}{4r}}\nonumber \\&\qquad\qquad\qquad\qquad\qquad\qquad\qquad\qquad\cdot \left(\sup_{s\in[\startT,T]}
\E\! \left[\exp \left(\funcUo(X_{\startT,s}^{\startX})+\int_{\startT}^{s}\funcUi(X_{\startT,t}^{\startX})\,dt \right) \right]\right)^{\frac{1}{4r}}\nonumber \\
&\leq \left[\funcEta(\size{\theta})\right]^{\frac{1}{4r}}\left[
\exp \left(e^{\alpha(T-\startT)}\funcUo(\startX)\right)\right]^{\frac{1}{2r}}.\label{r08}
\end{align}}%
\sloppy
Next, \eqref{r29b} and \eqref{g01bb} imply that $(\1_{\{t<\stTau{\startT}{\startX}{\theta}\}} )_{t\in[\startT,T]}$ is $(\F_t)_{t\in[\startT,T]}$-predictable. Combining this,
\eqref{g01a}, \eqref{r02}, \eqref{r28},
the fact that $(a_t)_{t\in[\startT,T]}$, $(b_t)_{t\in[\startT,T]}$, and
$(\1_{\{\stTau{\startT}{\startX}{\theta}<t\}} )_{t\in[\startT,T]}$  are $(\F_t)_{t\in[\startT,T]}$-predictable (see \eqref{r18}), 
 the fact that $(X_{\startT,t}^{\startX})_{t\in[\startT,T]}$ and
$(Y_{\startT,t}^{\startX,\theta})_{t\in[\startT,T]}$ are $(\F_t)_{t\in[\startT,T]}$-adapted, 
and \cite[Corollary~2.12]{HJ20} (applied
for $s\in[t_0,T]$
with 
$H\defeq\R^d$,
$U\defeq\R^m$, 
$O\defeq\R^d$, 
$\mathcal{O}\defeq\R^d$,
$T\defeq (T-\startT)$,
$(\mathcal{F}_t)_{t\in[0,T]}
\defeq (\F_{\startT+t})_{t\in[0,T-\startT]}$,
$(W_{t})_{t\in [0,T]}\defeq (W_{\startT+t}-W_{\startT})_{t\in[0,T-\startT] } $,
$(e_k)_{k\in[1,m]\cap\N}\defeq(\unit{m}{k})_{k\in[1,m]\cap\N}$,
$\mu\defeq \mu$, 
$\sigma\defeq\sigma$,  
$\epsilon\defeq 1$,
$p\defeq 2r$,  $\tau\defeq((s\wedge\stTau{\startT}{\startX}{\theta})-\startT)$,
$(X_t)_{t\in[0,T]}\defeq (X_{\startT,\startT+t})_{t\in [0,T-\startT]}$,
$(Y_t)_{t\in[0,T]}\defeq (Y_{\startT,\startT+t}^{\theta,\startX})_{t\in [0,T-\startT]}$,
$(a_t)_{t\in[0,T]}\defeq (\1_{\{t+\startT<\stTau{\startT}{\startX}{\theta}\}}a_{\startT+t})_{t\in[T-\startT]}$,
$(b_t)_{t\in[0,T]}\defeq (\1_{\{t+\startT<\stTau{\startT}{\startX}{\theta}\}}b_{\startT+t})_{t\in[T-\startT]}$,
$\delta\defeq 1$, 
$\rho\defeq1$, 
$r\defeq r$, 
 $q\defeq 2r$ in the notation of \cite[Corollary~2.12]{HJ20}),
\eqref{r08}, \eqref{r20b} (applied with $q\defeq2r$ in the notation of \eqref{r20b}), 
\eqref{r20c} 
(applied with $q\defeq2r$ in the notation of \eqref{r20c}), and the assumption that $4r \betaT\leq p$  imply that\fussy
{\allowdisplaybreaks
\begin{align}
&\sup_{s\in[\startT,T]}\left(\E\!\left[\left\|\1_{{\setD}_{\size{\theta}}}(Y_{\startT,\rdown{T}{\theta}}^{\theta,\startX})(X_{t_0,s }^{\startX}-Y_{t_0,s }^{\theta,{\startX}})\right\|^r\right]\right)^{\frac{1}{r}}\nonumber \\
&=\sup_{s\in[\startT,T]}\left(\E\!\left[\left\|\1_{\{\stTau{\startT}{\startX}{\theta}=T\}}(X_{t_0,s\wedge\stTau{\startT}{\startX}{\theta} }^{\startX}-Y_{t_0,s\wedge\stTau{\startT}{\startX}{\theta} }^{\theta,{\startX}})\right\|^r\right]\right)^{\frac{1}{r}}
\leq \sup_{s\in[\startT,T]}\left(\E\!\left[\left\|X_{t_0,s\wedge\stTau{\startT}{\startX}{\theta} }^{\startX}-Y_{t_0,s\wedge\stTau{\startT}{\startX}{\theta} }^{\theta,{\startX}}\right\|^r\right]\right)^{\frac{1}{r}}
\nonumber \\
&\leq 
\sup_{s\in[\startT,T]}\vastl{32pt}{\{}\left|
\E\!\left[\exp\left(\int_{\startT}^{s\wedge\stTau{\startT}{\startX}{\theta}}
\left.2r\left[\tfrac{
{\left\langle x-y, \mu(x)-\mu(y)\right\rangle+(2r-1) \|\sigma(x)-\sigma(y)\|_{\HS}^2}}{\|x-y\|^2}+\tfrac{3}{2}-\tfrac{1}{r}\right]^+
\right|_{\substack{x=X_{\startT,t}^{\startX}\nonumber \\y=Y_{\startT,t}^{\theta,\startX}}}dt
\right)\right]\right|^{\frac{1}{2r}}\nonumber \\&\quad\cdot\sqrt{2(2r-1)}
\left[\left(\E\!\left[\int_{\startT}^{s}\left\|a_t-\mu(Y_{\startT,t}^{\theta,\startX})\right\|^{2r}dt\right]\right)^{\frac{1}{2r}}+
\left(\E\!\left[\int_{\startT}^{s}\left\|b_t-\sigma(Y_{\startT,t}^{\theta,\startX})\right\|^{2r}dt\right]\right)^{\frac{1}{2r}}\right]\vastr{32pt}{\}}\nonumber \\
&\leq e^{(T-\startT)(3/2-1/r)}
\left[\funcEta(\size{\theta})\right]^{\frac{1}{4r}}
\left[
\exp \left(e^{\alpha(T-\startT)}\funcUo(\startX)\right)\right]^{\frac{1}{2r}}\sqrt{2(2r-1)}\nonumber \\
&\qquad\qquad\cdot
\left[(T-\startT)^{1/2r}\left[
18(T\vee1)b^2e^{2\rho\betaT (T-\startT)/p}(\lyaV({\startX}))^{2\betaT /p}\sqrt{\size{\theta}}
\sqrt{2r(4r-1)}
\right]\right]\nonumber \\
&\leq e^{3(T-\startT)/2}\left[\funcEta(\size{\theta})\right]^{\frac{1}{4r}}\left[
\exp \left(e^{\alpha(T-\startT)}\funcUo(\startX)\right)\right]^{\frac{1}{2r}}\nonumber \\
&\qquad\qquad\cdot 
\sqrt{32r^3}e^{(T-\startT)/(2r)}18(T\vee1)b^2 e^{2\rho\betaT (T-\startT)/p}(\lyaV({\startX}))^{2\betaT /p}\sqrt{\size{\theta}}\nonumber \\
&\leq 72\sqrt{2}(T\vee1)r^{3/2}b^2e^{(2+2\rho\betaT /p)(T-\startT)}\left[
\exp \left(e^{\alpha(T-\startT)}\funcUo(\startX)\right)\right]^{\frac{1}{2r}}(\lyaV({\startX}))^{2\betaT /p}\left[\funcEta(\size{\theta})\right]^{\frac{1}{4r}}\sqrt{\size{\theta}}
.
\label{r26}
\end{align}}%
Furthermore,
 \eqref{r32},
 Markov's inequality,
and
\cref{r33}
 imply that
\begin{align}\begin{split}
&\left(\P\!\left[Y_{\startT,\rdown{T}{\theta}}^{\theta,\startX}\notin{\setD}_{\size{\theta}}\right]\right)^{\frac{1}{2r}}\leq
\left( \P\!\left[\funcUo(Y_{\startT,\rdown{T}{\theta}}^{\theta,\startX})>-r\log(\size{\theta})\right]\right)^{\frac{1}{2r}}\\&=\left( \P\!\left[\exp \left(\funcUo(Y_{\startT,\rdown{T}{\theta}}^{\theta,\startX})\right)>\size{\theta}^{-r}\right]\right)^{\frac{1}{2r}}\\
&
\leq \left(\E\!\left[\exp \left(\funcUo(Y_{\startT,\rdown{T}{\theta}}^{\theta,\startX})\right)\right]
\size{\theta}^r\right)^{\frac{1}{2r}}
\leq \Bigl(\funcEta(\size{\theta})
\exp \left(e^{\alpha(T-\startT)}
\funcUo(\startX)\right)\Bigr)^{\frac{1}{2r}}\sqrt{\size{\theta}}.\label{r11b}
\end{split}\end{align}
H\"older's inequality, the triangle inequality, and 
a combination of 
\eqref{r20} (applied with  $q\defeq 2r$ in the notation of \eqref{r20}) and  of the assumption that $4r\betaT \leq p$ therefore prove for all $s\in[\startT,T]$ that 
\begin{align}\begin{split}
& \left(\E\!\left[\left\|\1_{\R^d\setminus{\setD}_{\size{\theta}}}(Y_{\startT,\rdown{T}{\theta}}^{\theta,\startX})(X_{t_0,s }^{\startX}-Y_{t_0,s }^{\theta,{\startX}})\right\|^r\right]\right)^{\frac{1}{r}}\\
&\leq 
\left(\P\!\left[Y_{\startT,\rdown{T}{\theta}}^{\theta,\startX}\notin{\setD}_{\size{\theta}}\right]\right)^{\frac{1}{2r}}
\left[
\left(\E\!\left[
\left\|X_{t_0,s}^{\startX}\right\|^{2r}\right]\right)^{\frac{1}{2r}}+
\left(\E\!\left[
\bigl\|Y_{t_0,s}^{\theta,{\startX}}\bigr\|^{2r}\right]\right)^{\frac{1}{2r}}\right]\\
&\leq 
\left[\left[\funcEta(\size{\theta})
\exp \left(e^{\alpha(T-\startT)}\right)\funcUo(\startX)
\right]^{\frac{1}{2r}}\sqrt{\size{\theta}}\right]
\left[2be^{\rho(s-t_0)\betaT /p}(\lyaV({\startX}))^{\betaT /p}\right].
\end{split}\end{align}
This, the triangle inequality,  \eqref{r26},  the fact that $\frac{2\betaT}{p}\leq \frac{1}{2r}$, and the fact that $\lyaV\geq 1$ imply that
\begin{align}\begin{split}
&\sup_{s\in[\startT,T]}\left(\E\!\left[\left\|X_{t_0,s }^{\startX}-Y_{t_0,s }^{\theta,{\startX}}\right\|^r\right]\right)^{\frac{1}{r}} \leq 
\sup_{s\in[\startT,T]}\biggl[
\left(\E\!\left[\left\|\1_{\R^d\setminus{\setD}_{\size{\theta}}}(Y_{\startT,\rdown{T}{\theta}}^{\theta,\startX})(X_{t_0,s }^{\startX}-Y_{t_0,s }^{\theta,{\startX}})\right\|^r\right]\right)^{\frac{1}{r}}\\&\qquad\qquad\qquad\qquad\qquad\qquad\qquad\qquad\qquad+
\left(\E\!\left[\left\|\1_{{\setD}_{\size{\theta}}}(Y_{\startT,\rdown{T}{\theta}}^{\theta,\startX})(X_{t_0,s }^{\startX}-Y_{t_0,s }^{\theta,{\startX}})\right\|^r\right]\right)^{\frac{1}{r}}\biggr]
\\
&\leq \left[\left[\funcEta(\size{\theta})
\exp \left(e^{\alpha(T-\startT)}
\funcUo(\startX)\right)\right]^{\frac{1}{2r}}\sqrt{\size{\theta}}\right]
\left[2be^{\rho(T-t_0)\betaT /p}(\lyaV({\startX}))^{\betaT /p}\right]\\&\quad +72\sqrt{2}(T\vee1)r^{3/2}b^2e^{(2+2\rho\betaT /p)(T-\startT)}\left[
\exp \left(e^{\alpha(T-\startT)}
\funcUo(\startX)\right)\right]^{\frac{1}{2r}}(\lyaV({\startX}))^{2\betaT /p}\left[\funcEta(\size{\theta})\right]^{\frac{1}{4r}}\sqrt{\size{\theta}}\\
&\leq 74\sqrt{2}(T\vee1)r^{3/2}b^2\exp\left((2+\tfrac{\rho}{2r})(T-\startT)\right)\left[\funcEta(\size{\theta})
\exp \left(e^{\alpha(T-\startT)}
\funcUo(\startX)\right)
\lyaV({\startX})\right]^{\frac{1}{2r}}
\sqrt{\size{\theta}}.
\end{split}\end{align}
This completes the proof of \cref{g03}.
\end{proof}

The following corollary extends  \cref{g03} to a formulation
which we later need to apply \cite[Corollary~3.12]{HJKN20}.

\begin{corollary}\label{r17}Assume \cref{g01},
let $\startT\in [0,T)$, $\startX\in\R^d$, $t\in [\startT,T]$, $s\in [t,T]$,
and
let $\epsilon\colon \R\to[0,\infty)$ be the function which satisfies for all $a\in\R$ that
\begin{align}\small
\begin{split}\label{r52}
\epsilon(a)=74\sqrt{2}(T\vee1)r^{3/2}b^2
\exp\left((2+\tfrac{\rho}{2r})(T-\startT)\right)
\left[\funcEta(a)\exp \left(e^{\alpha(T-\startT)}
\funcUo(\startX)\right)\lyaV(\startX)\right]^{\frac{1}{2r}}
\sqrt{|a|}.
\end{split}\end{align}
 Then it holds for all $\theta\in \setCalP{\startT}{T}$ that
\begin{align}\label{r50}
\left(\E\!\left[\left\|X_{t,s }^{ Y^{{\theta},\startX}_{t_0,t}}-X_{t,s }^{ X_{t_0,t}^{\startX}}\right\|^r\right]\right)^{\frac{1}{r}} \leq   \epsilon(\size{\theta}).
\end{align}
\end{corollary}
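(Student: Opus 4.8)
The plan is to reduce \cref{r17} to \cref{g03} by passing to a hybrid process that runs the tamed Euler scheme up to time $t$ and the exact dynamics afterwards. First I would use pathwise uniqueness for \eqref{g01a} (a consequence of the local Lipschitz bound \eqref{r05a}) to obtain the flow identity $X_{t,s}^{X_{\startT,t}^{\startX}}=X_{\startT,s}^{\startX}$ $\P$-a.s.\ for all $s\in[t,T]$, so that the left-hand side of \eqref{r50} equals $(\E[\|X_{t,s}^{Y_{\startT,t}^{\theta,\startX}}-X_{\startT,s}^{\startX}\|^r])^{1/r}$. Then I would define $\hat X\colon[\startT,T]\times\Omega\to\R^d$ by $\hat X_u=Y_{\startT,u}^{\theta,\startX}$ for $u\in[\startT,t]$ and $\hat X_u=X_{t,u}^{Y_{\startT,t}^{\theta,\startX}}$ for $u\in[t,T]$; thus $\hat X$ solves \eqref{g01bb} (with partition $\theta$) on $[\startT,t]$ and \eqref{g01a} on $[t,T]$, both legs emanating from $\startX$ at time $\startT$, and $\hat X_s=X_{t,s}^{Y_{\startT,t}^{\theta,\startX}}$ for $s\ge t$. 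It therefore suffices to prove $\sup_{s\in[\startT,T]}(\E[\|X_{\startT,s}^{\startX}-\hat X_s\|^r])^{1/r}\le\epsilon(\size{\theta})$, i.e.\ that $\hat X$ obeys exactly the bound that the full Euler scheme obeys in \cref{g03}.

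To do this I would rerun the proof of \cref{g03} with $Y_{\startT,\cdot}^{\theta,\startX}$ replaced by $\hat X$. As in \eqref{r18}--\eqref{r28} one writes $\hat X_s=\startX+\int_{\startT}^s\hat a_u\,du+\int_{\startT}^s\hat b_u\,dW_u$, where on $[\startT,t]$ one has $\hat a_u=\1_{\{u<\stTau{\startT}{\startX}{\theta}\}}a_u$ and $\hat b_u=\1_{\{u<\stTau{\startT}{\startX}{\theta}\}}b_u$ with $a,b$ and $\stTau{\startT}{\startX}{\theta}$ as in \eqref{r18} and \eqref{r23}, while on $(t,T]$ one has $\hat a_u=\mu(\hat X_u)$ and $\hat b_u=\sigma(\hat X_u)$. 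The key structural point is that there is \emph{no} initial-value mismatch (both $\hat X$ and $X_{\startT,\cdot}^{\startX}$ start at $\startX$), and the coefficient mismatches $\|\hat a_u-\mu(\hat X_u)\|$, $\|\hat b_u-\sigma(\hat X_u)\|_{\HS}$ vanish on $(t,T]$ and on $[\startT,t]$ obey the same $O(\sqrt{\size{\theta}})$ bounds as \eqref{r20b}--\eqref{r20c}. Then I would apply \cite[Corollary~2.12]{HJ20} to the pair $(X_{\startT,\cdot}^{\startX},\hat X)$ over $[\startT,s]$ exactly as in \eqref{r26}, controlling the exponential factor via \eqref{r02}, \eqref{r29b}, Jensen's and H\"older's inequalities and the exponential-moment estimates of \cref{r33}: along $X$ and $\hat X$ on $[\startT,t]$ as in \eqref{r08}, and, for the exact leg on $[t,T]$, by conditioning on $\F_t$ (so that $Y_{\startT,t}^{\theta,\startX}$ is conditionally deterministic), applying \eqref{r33b} conditionally, and absorbing the resulting $\E[\exp(e^{\alpha(T-t)}\funcUo(Y_{\startT,t}^{\theta,\startX}))]$ by \eqref{r33a} at terminal time $t$. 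Finally I would treat the event $\{Y_{\startT,\rdown{t}{\theta}}^{\theta,\startX}\notin\setD_{\size{\theta}}\}$ by the argument behind \eqref{r11b} together with the moment bounds \eqref{r20}, and check that the constants recombine into \eqref{r52}.

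The step I expect to be the main obstacle — and the reason for routing everything through $\hat X$ rather than combining \cref{g03} with a separate flow-stability estimate — is the moment bookkeeping. Since \cref{g01} only assumes $p\ge4r\betaT$, one has $L^{2r}$-control of the taming error (which is what is needed to feed \eqref{r20b}--\eqref{r20c} into \cite[Corollary~2.12]{HJ20} with exponent $2r$) but \emph{not} $L^{2r}$-control of the strong error $\|X_{\startT,t}^{\startX}-Y_{\startT,t}^{\theta,\startX}\|$; a two-step ``strong error on $[\startT,t]$, then propagate the flow on $[t,T]$'' argument would require the latter, hence $p\ge8r\betaT$, and the exponential weight from \eqref{r02} cannot be H\"older-decoupled from it. Using the hybrid process sidesteps this entirely, since the only discrepancy is a coefficient mismatch confined to $[\startT,t]$. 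The remaining points — the $\F_t$-conditioning for the exact leg, the identity $\size{\theta}=\size{\tilde\theta}$ under the relevant time shift, and the fact that $\{Y_{\startT,\rdown{t}{\theta}}^{\theta,\startX}\in\setD_{\size{\theta}}\}$ forces the exact leg to start inside $\setD_{\size{\theta}}$ — are routine, following \eqref{r08}--\eqref{r26}.
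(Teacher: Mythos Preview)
Your approach is viable and genuinely different from the paper's. The paper does \emph{not} rerun the proof of \cref{g03} for a hybrid process; instead it reduces \cref{r17} to a black-box application of \cref{g03} via a limiting argument. Concretely, for a fine partition $\nu\in\setCalP{t}{T}$ the disintegration formula and the flow property of the tamed scheme give
\[
\bigl(\E\bigl[\|Y_{t,s}^{\nu,Y_{\startT,t}^{\theta,\startX}}-X_{t,s}^{X_{\startT,t}^{\startX}}\|^r\bigr]\bigr)^{1/r}
=\bigl(\E\bigl[\|Y_{\startT,s}^{\tilde\theta,\startX}-X_{\startT,s}^{\startX}\|^r\bigr]\bigr)^{1/r}
\le\epsilon(\size{\theta}\vee\size{\nu}),
\]
where $\tilde\theta$ is the concatenation of $\theta|_{[\startT,t]}$ and $\nu$; the right-hand side is exactly \cref{g03}. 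One then lets $\size{\nu}\to0$, uses $Y^{\nu,\tilde y}_{t,s}\to X^{\tilde y}_{t,s}$ in $L^r$ for fixed $\tilde y$ (again \cref{g03}, now on $[t,T]$), and applies Fatou's lemma together with continuity of $\epsilon$ and $\epsilon(0)=0$ to pass to the limit. This avoids re-opening the perturbation argument entirely.

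What each route buys: the paper's argument is short and never touches \cite[Corollary~2.12]{HJ20} or the exponential-moment machinery a second time; its only nontrivial ingredients beyond \cref{g03} are disintegration and the scheme's flow property. Your route is more hands-on and, as you correctly note, sidesteps the moment-bookkeeping problem (needing only $p\ge4r\betaT$) by confining the coefficient mismatch to $[\startT,t]$; it would also work, but you have to redo the stopping-time splitting (with a stopping time that equals $\stTau{\startT}{\startX}{\theta}$ on $\{\stTau{\startT}{\startX}{\theta}\le t\}$ and $T$ otherwise), carry the $\F_t$-conditioning through the four-factor H\"older split of \eqref{r08}, and verify that the telescoped \eqref{r33a}--\eqref{r33b} bounds really recombine into exactly the constant in \eqref{r52}. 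That last point is the one place where your sketch is optimistic: each of the $U$- and $\funcUi$-factors along $\hat X$ picks up a $\funcEta(\size{\theta})$ from \eqref{r33a} applied at terminal time $t$, and you should check that the total power of $\funcEta$ you accumulate does not exceed $\tfrac{1}{2r}$.
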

\begin{proof}[Proof of \cref{r17}]
First, we consider the trivial cases $t=\startT$ and $t=T$.
Observe that \eqref{g01a} and \eqref{g01bb} imply that $ Y^{{\theta},\startX}_{\startT,\startT}=X_{\startT,\startT}^{\startX}=\startX$.
This shows 
in the case $t=\startT$ that \eqref{r50} is trivially true. 
Next, \cref{g03} and \eqref{r52} imply for all $\theta\in \setCalP{\startT}{T}$ that
\begin{align}\label{r54}
\left(\E\!\left[\left\|Y_{\startT,T}^{{\theta},\startX}-X^{\startX}_{\startT,T}\right\|^r\right]\right)^{\nicefrac{1}{r}}\leq \epsilon(\size{\theta}).
\end{align}
This shows in the case $t=T$ that \eqref{r50} holds.
For the rest of the proof we assume that $t\in (\startT,T)$.
The fact that the Brownian motion has independent increments, \eqref{g01a}, and \eqref{g01bb}  imply   that
\begin{align}\label{r55}\color{blue}\begin{split}
&\sigmaAlgebra\bigl(\bigl\{X_{t,s}^x,Y^{\eta,x}_{t,s} \colon
\eta\in\setCalP{t}{T},x\in\R^d
 \bigr\}\bigr)
\text{ and }
\sigmaAlgebra\bigl(\bigl\{X_{t_0,t}^x,Y^{\eta,x}_{t_0,t} \colon
\eta\in\setCalP{t}{T},x\in\R^d\bigr\}\bigr)\\&
\text{are independent.}
\end{split}\end{align}
This combined with disintegration (see, e.g., \cite[Lemma~2.2]{HJK+18}), \eqref{r40c},
 the flow property, \eqref{r54},  and the fact that
$\epsilon|_{[0,\infty)}$ is non-decreasing 
imply that
for all
$\theta,\tilde{\theta}\in \setCalP{\startT}{T}$,
$\eta \in \setCalP{t}{T}$,
$n\in\N$,
$k,\ell\in [0,n]\cap\N_0$,
 $t_1,t_2,\ldots,t_n,s_1,s_2,\ldots,s_\ell\in\R$ with $\theta= (\startT,t_1,\ldots,t_n)$, 
$\eta= (t,s_1,s_2,\ldots,s_\ell)$,
$t_k< t\leq t_{k+1}$,
$\tilde{\theta}= (\startT,t_1,\ldots,t_k,t,s_1,s_2,\ldots,s_\ell)$
it holds that
$\size{\tilde{\theta}}\leq \size{\theta}\vee\size{\eta}$ and
\begin{align}\begin{split}
&\left(\E\!\left[\left.\E\!\left[\left\|Y_{t,s }^{\eta,\tilde{y}}-X_{t,s }^{\tilde{x}}\right\|^r\right]
\right|_{\tilde{x}=X_{t_0,t}^{\startX},\tilde{y}= Y^{{\theta},\startX}_{t_0,t}}\right]
\right)^{\nicefrac{1}{r}}
=\left(\E\!\left[\left\|Y_{t,s }^{{\eta}, Y^{{\theta},\startX}_{t_0,t}}-X_{t,s }^{ X_{t_0,t}^{\startX}}\right\|^r\right]\right)^{\nicefrac{1}{r}}\\
&
=
\left(\E\!\left[\left\|Y_{\startT,T}^{\tilde{\theta},\startX}-X^{\startX}_{\startT,T}\right\|^r\right]\right)^{\nicefrac{1}{r}}
\leq \epsilon(\size{\tilde{\theta}})\leq 
\epsilon(\size{\theta}\vee\size{\eta}).\label{r51}
\end{split}\end{align}
This, \eqref{r55}
combined with a basic result on disintegration (see, e.g., Lemma 2.2 in~\cite{HJK+18}),
\eqref{r54},
Fatou's lemma, continuity of $\epsilon$, and
the fact that $\epsilon(0)=0$ imply for all $\theta\in \setCalP{\startT}{T}$ that
\begin{align}
\begin{split}
&\left(\E\!\left[\left\|X_{t,s }^{Y^{{\theta},\startX}_{t_0,t}}-X_{t,s }^{ X_{t_0,t}^{\startX}}\right\|^r\right]\right)^{\nicefrac{1}{r}}=
\left(\E\!\left[\left.\E\!\left[\left\|X_{t,s }^{\tilde{y}}-X_{t,s }^{\tilde{x}}\right\|^r\right]
\right|_{\tilde{x}=X_{t_0,t}^{\startX},\tilde{y}= Y^{{\theta},\startX}_{t_0,t}}\right]
\right)^{\nicefrac{1}{r}}\\
&=
\left(\E\!\left[\lim_{\nu\in \setCalP{t}{T},\size{\nu}\to0}\left.\E\!\left[\left\|Y_{t,s }^{\nu,\tilde{y}}-X_{t,s }^{\tilde{x}}\right\|^r\right]
\right|_{\tilde{x}=X_{t_0,t}^{\startX},\tilde{y}= Y^{{\theta},\startX}_{t_0,t}}\right]
\right)^{\nicefrac{1}{r}}\\
&
\leq\liminf_{\nu\in \setCalP{t}{T},\size{\nu}\to0}
\left(\E\!\left[\left.\E\!\left[\left\|Y_{t,s }^{\nu,\startX}-X_{t,s }^{\startX}\right\|^r\right]
\right|_{\tilde{x}=X_{t_0,t}^{\startX},\tilde{y}= Y^{{\theta},\startX}_{t_0,t}}\right]
\right)^{\nicefrac{1}{r}}\\
& \leq\liminf_{\nu\in \setCalP{t}{T},\size{\nu}\to0} 
\epsilon(\size{\theta}\vee\size{\nu}) =\epsilon(\size{\theta}).
\end{split}
\end{align}
 This implies \eqref{r50} and  completes the proof of \cref{r17}.
\end{proof}

\subsection{A Lyapunov-type function }
The following lemma, Lemma \ref{s28}, shows that
the functions $V\defeq(\R^d\ni x\mapsto(\|x\|^2+d)^{p+1}$, $d,p\in\N$,
are suitable Lyapunov-type functions.
\begin{lemma}\label{s28}
Consider the notation in~\cref{s05b}, let $d,m\in\N$,
$p\in [3/2,\infty)$, $a,c\in(0,\infty)$,  and
let 
$\mu\colon\R^d\to\R^d$,
$\sigma=(\sigma_1,\sigma_2,\ldots,\sigma_m)\colon \R^ {d\times m}\to\R$,
$\varphi\colon \R^d\to \R $ 
 satisfy for all $x\in\R^d$ that
\begin{align}
\varphi(x)= \left({a} +\|x\|  ^2 \right)^p\quad\text{and}\quad
\langle \mu(x),x \rangle  
+\tfrac{1}{2} (2p-1)\|\sigma(x)\|_{\HS}^2\leq c \varphi(x)^{1/p}.\label{s31}
\end{align}
Then 
it holds for all $x\in\R^d$, $i\in\{1,2,3\}$ that
$
\|(\totalD^i \varphi)(x)\|_{\mlinear{i}{\R^d}{\R}}   \leq (2p)^i\varphi(x)^{1-\frac{i}{2p}}
$ and
$
((\totalD \varphi)(x))(\mu(x))+\frac{1}{2}\sum_{k=1}^{m} ((\totalD^2 \varphi) (x))(\sigma_k(x),\sigma_k(x))\leq 2cp \varphi(x)
$.
\end{lemma}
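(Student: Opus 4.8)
The plan is to write $\varphi=g^p$ with $g\colon\R^d\to(0,\infty)$ given by $g(x)=a+\|x\|^2$, so that $\varphi(x)^{1/p}=g(x)$, $\|x\|^2\le g(x)$, the linear form $(\totalD g)(x)$ acts by $u\mapsto 2\langle x,u\rangle$ and thus has operator norm $2\|x\|\le 2g(x)^{1/2}$, the bilinear form $(\totalD^2 g)(x)$ acts by $(u,v)\mapsto 2\langle u,v\rangle$ and thus has operator norm $2$, and $(\totalD^3 g)(x)=0$. First I would differentiate $\varphi=g^p$ three times by the chain and product rules, obtaining $(\totalD\varphi)(x)=p\,g(x)^{p-1}(\totalD g)(x)$, $(\totalD^2\varphi)(x)=p(p-1)g(x)^{p-2}(\totalD g)(x)\otimes(\totalD g)(x)+p\,g(x)^{p-1}(\totalD^2 g)(x)$, and, using $(\totalD^3 g)(x)=0$, that $(\totalD^3\varphi)(x)$ equals $p(p-1)(p-2)g(x)^{p-3}[(\totalD g)(x)]^{\otimes 3}$ plus $p(p-1)g(x)^{p-2}$ times the sum of the three symmetrisations of $(\totalD^2 g)(x)\otimes(\totalD g)(x)$.

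Next I would estimate the operator norms of these multilinear forms term by term using $\|(\totalD g)(x)\|\le 2g(x)^{1/2}$, $\|(\totalD^2 g)(x)\|\le 2$, and $g(x)\ge 0$; this yields $\|(\totalD^i\varphi)(x)\|_{\mlinear{i}{\R^d}{\R}}\le C_i(p)\,g(x)^{p-\nicefrac{i}{2}}$ with $C_1(p)=2p$, $C_2(p)=4p|p-1|+2p$, and $C_3(p)=8p|p-1||p-2|+12p|p-1|$. It then remains to verify $C_i(p)\le(2p)^i$ for all $p\in[\nicefrac{3}{2},\infty)$: for $i=1$ this is an equality; for $i=2$ it reduces to $2|p-1|+1\le 2p$, which holds for $p\ge1$; and for $i=3$ it reduces to $|p-1|(2|p-2|+3)\le 2p^2$, which I would check by splitting into the case $p\ge2$, where the left-hand side equals $(p-1)(2p-1)=2p^2-3p+1\le 2p^2$, and the case $\nicefrac{3}{2}\le p\le2$, where the left-hand side equals $(p-1)(7-2p)\le3\le\nicefrac{9}{2}\le 2p^2$. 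Since $g(x)^{p-\nicefrac{i}{2}}=\varphi(x)^{1-\nicefrac{i}{2p}}$, this proves the first assertion.

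For the second assertion I would insert the explicit forms of $(\totalD\varphi)(x)$ and $(\totalD^2\varphi)(x)$ above to obtain the identity
\begin{align*}
&((\totalD\varphi)(x))(\mu(x))+\tfrac12\sum_{k=1}^m((\totalD^2\varphi)(x))(\sigma_k(x),\sigma_k(x))\\
&\quad=2p\,g(x)^{p-1}\langle x,\mu(x)\rangle+p\,g(x)^{p-1}\|\sigma(x)\|_{\HS}^2+2p(p-1)g(x)^{p-2}\sum_{k=1}^m\langle x,\sigma_k(x)\rangle^2.
\end{align*}
Since $p\ge\nicefrac{3}{2}$ forces $p-1\ge\nicefrac12>0$, the Cauchy--Schwarz estimate $\sum_{k=1}^m\langle x,\sigma_k(x)\rangle^2\le\|x\|^2\|\sigma(x)\|_{\HS}^2\le g(x)\|\sigma(x)\|_{\HS}^2$ may be inserted into the last term; factoring out $2p\,g(x)^{p-1}$ then bounds the expression by $2p\,g(x)^{p-1}\bigl[\langle\mu(x),x\rangle+\tfrac{2p-1}{2}\|\sigma(x)\|_{\HS}^2\bigr]$, and by hypothesis \eqref{s31} the bracket is at most $c\,\varphi(x)^{1/p}=c\,g(x)$, so the whole expression is at most $2cp\,g(x)^p=2cp\,\varphi(x)$.

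The only genuine obstacle I anticipate is the bookkeeping in the second paragraph — tracking the combinatorial factors in $(\totalD^3\varphi)(x)$ and confirming that the constant $C_3(p)$ stays below $(2p)^3$ on the whole range $p\ge\nicefrac{3}{2}$ (this is the tightest of the three bounds and is what forces the case split) — whereas every other step is routine once one has $\|x\|\le g(x)^{1/2}$ and the positivity $p-1>0$.
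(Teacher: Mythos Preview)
Your proof is correct and follows essentially the same route as the paper: explicit computation of the derivatives of $\varphi=g^p$, then bounding the multilinear operator norms via $\|x\|\le g(x)^{1/2}$, and finally combining the first two derivatives with the hypothesis \eqref{s31} to get the generator bound. The only cosmetic differences are that the paper handles the $i=3$ case by writing $8p(p-1)((p-2)\vee 0)+12p(p-1)\le 8p^2(p-1)$ rather than splitting at $p=2$, and for the second assertion it invokes the already-proved bound $\|(\totalD^2\varphi)(x)\|\le 2p(2p-1)\varphi(x)^{1-1/p}$ directly instead of expanding the $\langle x,\sigma_k(x)\rangle^2$ terms and applying Cauchy--Schwarz explicitly as you do.
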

\begin{proof}[Proof of \cref{s28}]First, 
\eqref{s31} and the Cauchy-Schwarz inequality
 show  that for all  $x,u,v,w\in\R^d$ it holds that
 $ \|x\|  \leq (\varphi(x))^{\frac{1}{2p}}$,
\begin{gather}\small
\begin{split}
[(\totalD \varphi)(x)](u)&= 
p\left[{a} +\|x\|  ^2 \right]^{p-1}2\langle x,u\rangle=2p(\varphi (x))^{\frac{p-1}{p}}\langle x,u\rangle\leq 
2p(\varphi (x))^{\frac{p-1}{p}}\|x\|\|u\|,\label{s32}
\end{split}
\\\small
\begin{split}
[(\totalD^2 \varphi)(x)](u,v)
&= 2p(p-1)\left[{a} +\|x\|  ^2 \right]^{p-2}2\langle x,v\rangle\langle x,u\rangle+2p\left[{a} +\|x\|  ^2 \right]^{p-1}\langle u,v\rangle,
\\\small
&\leq  4p(p-1)(\varphi(x))^{\frac{p-2}{p}}\|x\|^2\|u\|\|v\|+2p(\varphi (x))^{\frac{p-1}{p}}\|u\|\|v\|,\quad\text{and}\quad
\end{split}\\\small
\begin{split}
&[(\totalD^3V)(x)](u,v,w)= 
4p(p-1)(p-2)\left[{a} +\|x\|  ^2 \right]^{p-3}
2\langle x,w \rangle\langle x,v \rangle\langle x,u \rangle
\\&+4p(p-1)\left[{a} +\|x\|  ^2 \right]^{p-2}\bigl(\langle w,u \rangle\langle x,v\rangle  + \langle x,u\rangle \langle w,v\rangle\bigr)
+2p(p-1)\left[{a} +\|x\|  ^2 \right]^{p-2}2\langle x,w\rangle\langle u,v\rangle\\
&\leq 
8p(p-1)(p-2)(\varphi(x))^{\frac{p-3}{p}}
\|x\|^3\|w\|\|v\|\|u\|
\\&\quad+4p(p-1)(\varphi(x))^{\frac{p-2}{p}}\bigl(\|w\|\|u\|\|x\|\|v\| + \| x\|\|u\|\| w\| \|v\|\bigr)\\&\quad
+8p(p-1)(\varphi(x))^{\frac{p-2}{p}}2\| x\|\|w\|\| u\|\|v\|.
\end{split}
\end{gather}
This, \eqref{s36}, the triangle inequality, 
 and the assumption that $p\geq 3/2$
 imply for all  $x\in\R^d$  that
\begin{gather}
\begin{gathered}
\begin{split}
\|(\totalD \varphi)(x)\|   &\leq  2p (\varphi(x))^{\frac{p-1}{p}}\|x\|  \leq 2p (\varphi(x))^{1-\frac{1}{2p}},\end{split}
\\[3pt]
\begin{split}
\left\|(\totalD^2 \varphi)(x)\right\|_{\mlinear{2}{\R^d}{\R}}
&\leq 
(4p(p-1)+2p)(\varphi(x))^{1-\frac{2}{2p}}
\leq  2p(2p-1) (\varphi(x))^{1-\frac{2}{2p}},
\quad\text{and}\quad
\end{split}
\\[3pt]
\begin{split}
&\left\|(\totalD^3 \varphi)(x)\right\|_{\mlinear{3}{\R^d}{\R}}
 \leq 
\left[
8p(p-1)((p-2)\vee 0)+12p(p-1)\right](\varphi(x))^{1-\frac{3}{2p}}\\
&
\leq  p(p-1) [ 8(p-2)\vee 0+12 ](\varphi(x))^{1-\frac{3}{2p}}
\leq 
p(p-1) 8p(\varphi(x))^{1-\frac{3}{2p}}
\leq  (2p)^3(\varphi(x))^{1-\frac{3}{2p}}.
\end{split}\label{s29a}
\end{gathered}\end{gather}
This,  \eqref{s32}, and \eqref{s31}
 prove for all $x\in\R^d$ that
\begin{align}\small\begin{split}
&((\totalD \varphi)(x))(\mu(x))+\frac{1}{2}\sum_{k=1}^{m} ((\totalD^2 \varphi) (x))(\sigma_k(x),\sigma_k(x))\\
&\leq  2p(\varphi(x))^{\frac{p-1}{p}}\langle \mu(x),x \rangle  +\frac{1}{2}\|(\totalD^2V)(x)\|_{\mlinear{2}{\R^d}{\R}}\sum_{k=1}^{m}\|\sigma_k(x)\|_{\HS}^2\\
&= 2p(\varphi(x))^{\frac{p-1}{p}}
\left[\langle \mu(x),x \rangle  
+\frac{1}{2}(2p-1) \|\sigma(x)\|_{\HS}^2
\right]\leq 2p(\varphi(x))^{\frac{p-1}{p}} c(\varphi(x))^{\frac{1}{p}}= 2cp \varphi(x).
\end{split}\end{align}
This and \eqref{s29a}
complete the proof of \cref{s28}.
\end{proof}

\section{Error analysis for MLP approximations}
\renewcommand{\lyaV}{\varphi}
A central assumption of Theorem \ref{m01} below
is the local monotonicity condition \eqref{w01} where
$U,\bar{U}$ satisfy the exponential integrability condition \eqref{eq:exponential.integrability}.
These conditions are satisfied by many interesting SDEs from applications; see, e.g. \cite[Chapter 4]{CHJ14}.
The two main steps of the proof of Theorem~\ref{m01}
are to (a)
apply \cite[Corollary~3.12]{HJKN20} with the forward process given by  \eqref{v01}
to obtain \eqref{eq:V.v}
and to (b) apply \cite[Lemma 2.3]{HJKN20} to obtain the distance \eqref{v16}
between the exact PDE solution and the solution of the stochastic fixed-point equation
with respect to~\eqref{v01}.

%\paragraph{Computational efforts}
%Observe that \eqref{v01} shows  for all $\delta\in \setCalP{0}{T}$  that
%the number of evaluations of $(\mu,\sigma)$ to compute a realization $(Y^{\delta,\theta}_{t,s}(\omega,x)\in\R^d$ is bounded by
%$1+ \left \lceil T/\size{\delta}\right\rceil$
\begin{theorem}
\newcommand{\bfdelta}{\boldsymbol{\delta}}
\label{m01}
Consider the notation in~\cref{s05b},
let $d,m\in\N$,
$T\in  (0,\infty)$, 
  $b,c,\betaT,\gamma\in [1,\infty)$,  $\LipConstF,\alpha\in[0,\infty)$,
$p\in[8\betaT,\infty)$, 
%$\funcCut\colon  (0,T]\to\R$,
$\funcUi\in C(\R^{d},[0,\infty))$, 
$ 
\kappaA\in (0, p/(3\betaT+1)]$,
$\lyaV\in C^3(\R^d, [1,\infty))$,
$\funcUo\in C^3(\R^{d},[0,\infty))$,
$\funcG\in C( \R^d,\R)$, let
$\smallF\colon [0,T]\times \R^{d}\times \R \to\R$ be 
$\mathcal{B}([0,T]\times \R^{d}\times \R)/\mathcal{B}(\R) $-measurable,
let 
$\mu\in C( \R^{d} ,\R^{d})$, $\sigma=(\sigma_1,\sigma_2,\ldots,\sigma_m)\in C( \R^{d},\R^{d\times m})$,
let $\funcF\colon\R^{[0,T]\times\R^d}\to\R^{[0,T]\times\R^d}$ satisfy for all
$t\in [0,T]$,
$x\in \R^d$,
 $w\in \R^{[0,T]\times\R^d}$ that
\begin{align}\label{r40e}
(\funcF(w))(t,x)= f(t,x,w(t,x))
\end{align}
let
$({\setD}_{h})_{h\in(0,T]}\subseteq \mathcal{B}(\R^{d})$, 
let $\rho \in\R$ satisfy that
$\rho=(5c^{1+\frac{1}{\kappaA}}p)^{3p}$,
assume  for all $\ell\in\{1,2,3\}$, $x,y\in\R^d$, $t\in[0,T]$,  $w_1,w_2\in\R $  that
{\allowdisplaybreaks
\begin{gather}\label{v05}
\left\|(\totalD^\ell \lyaV)(x)\right\|_{\mlinear{\ell}{\R^d}{\R}}
\leq c| \lyaV(x)|^{1-\frac{\ell }{p}},
\\
\label{v06}
(\totalD \lyaV(x))(\mu(x))+\frac{1}{2} \sum_{k=1}^{m} (\totalD^2\lyaV(x))(\sigma_k(x),\sigma_k(x))\leq c\lyaV(x),
\\
\label{v04}
\max\left\{ |Tf(t,x,0)|,|g(x)|,\|\mu(x)\|, \|\sigma(x)\|^2, \|x\|\right\}\leq b(\lyaV(x))^{\frac{\betaT }{p}},
\\
\label{v07}
|f(t,x,w_1)-f(t,y,w_2)|\leq L|w_1-w_2|+ b T^{-\nicefrac{3}{2}}
(\lyaV(x)+\lyaV(y))^{\frac{\betaT }{p}}
\|x-y\|,
\\
|g(x)-g(y)|\leq b T^{-\nicefrac{1}{2}}
(\lyaV(x)+\lyaV(y))^{\frac{\betaT }{p}}
\|x-y\|\label{v07b}
\\
\max\left\{
 \|\mu(x)-\mu(y)\|,
\|\sigma(x)-\sigma(y)\|\right\}
\leq b\|x-y\|\left[(\lyaV(x))^{\frac{\betaT }{p}}+(\lyaV(y))^{\frac{\betaT }{p}}\right],\label{v03}
\\
\begin{split}\label{w01}
&{\left\langle x-y, \mu(x)-\mu(y)\right\rangle+3 \|\sigma(x)-\sigma(y)\|_{\HS}^2}\\&
\leq {\|x-y\|^2}
 \left[\frac{\funcUo(x)+\funcUo(y)}{16T}+\frac{\funcUi(x)+\funcUi(y)}{16}\right],
\end{split}
\\\label{w02}
\left\|(\totalD^\ell \funcUo)(x)\right\|_{\mlinear{\ell}{\R^d}{\R}}\leq c| \funcUo(x)|^{1-\frac{1 }{c}},\quad 
|\funcUi(x)|\leq c(1+|\funcUo(x)|^\gamma),
\\
|\funcUi (x)-\funcUi (y)|\leq 
c\left[1+|\funcUo (x)|^\gamma+|\funcUo (y)|^\gamma\right]\|x-y\|,\quad\text{and}\quad
\\
\begin{split}\label{eq:exponential.integrability}
(\totalD \funcUo (x))(\mu (x))&+\frac{1}{2} \sum_{k=1}^{m} (\totalD^2\funcUo (x))(\sigma_{k}(x),\sigma_{k}(x))\\&+
\frac{1}{2}e^{\alpha T}\left\|\sigma (x)^*(\nabla \funcUo) (x)\right\|^2+\funcUi (x)\leq \alpha\funcUo (x),
\end{split}
\end{gather}}%
%assume that
%\begin{gather}
%\sup_{h\in (0,T]}
%\max\left\{|h|^{\frac{1}{28}}\funcCut(h),h^{\frac{1}{8+8\gamma}}\funcCut(h)
%,
%|h|^{\kappa}\funcCut(h),
%\nfrac{\exp \left(-\funcCut(h)/4\right)c}{(b\sqrt{h})}
%\right\}
%\leq c,
%\end{gather}
 assume for all 
$h\in (0,T]$, $x\in {\setD}_{h}$
 that 
\begin{align}
\max\left\{\|\mu (x)\|,\|\sigma (x)\|,\funcUo (x),b^{3\kappaA}\lyaV(x)\right\}\leq
\min\Big\{ch^{-\frac{1}{28}},ch^{-\frac{1}{8+8\gamma}},ch^{-\kappaA},2\log\big(\tfrac{1}{h}\big)\Big\},
\end{align} 
let $\setCalP{0}{T}$ be the set which satisfies that
 \begin{align}
\setCalP{0}{T}=\{(t_0,t_1,\ldots,t_n)\in\R^{n+1}\colon n\in\N,0=t_0<t_1<\ldots<t_n=T\},
\end{align}
for every 
$\delta\in \setCalP{0}{T}$, 
$n\in\N$,
$(t_0,t_1,\ldots,t_n)\in\R^{n+1}$
 with $\delta=(t_0,t_1,\ldots,t_n)$ let
$\size{\delta}\in(0,T] $, $\rdown{\cdot}{\delta}\colon [t_0,t_n]\to \R $ satisfy 
for all $t\in(t_0,t_n]$
 that 
\begin{align}
 \size{\delta} =\max_{i\in [0,n-1]\cap\N_0} |t_{i+1}-t_i|,\quad
\rdown{t_0}{\delta}=t_0,
\quad\text{and}\quad\rdown{t}{\delta}=\sup(\{t_0,t_1,\ldots,t_n\}\cap[t_0,t)),
\end{align}
let $(\bfdelta_n)_{n\in\N}\subseteq\setCalP{0}{T}$ satisfy for all $n\in\N$ that 
\begin{align}\label{r02b}
\bfdelta_n=(0,1,\ldots,n^n) \tfrac{T}{n^n},
\end{align}
let 
$  \Theta = \bigcup_{ n \in \N } \Z^n$,
let $(\Omega,\mathcal{F},\P, (\F_t)_{t\in[0,T]})$ be a filtered probability space which satisfies the usual conditions,
let $\unif^\theta\colon \Omega\to[0,1]$, $\theta\in \Theta$, be independent random variables which are uniformly distributed on $[0,1]$, 
let $\uniform^\theta\colon [0,T]\times \Omega\to [0, T]$, $\theta\in\Theta$, satisfy 
for all $t\in [0,T]$, $\theta\in \Theta$ that 
$\uniform^\theta _t = t+ (T-t)\unif^\theta$,
 let $W^\theta\colon [0,T]\times\Omega \to \R^{m}$, $\theta\in\Theta$, be  standard $(\F_{t})_{t\in[0,T]}$-Brownian motions with continuous sample paths,
assume that $\sigmaAlgebra(\{\unif^\theta\colon\theta\in\Theta\})$ and
$\sigmaAlgebra(\{W^\theta_t\colon\theta\in\Theta,t\in[0,T]\})$ are independent,
for every 
$\theta\in\Theta$,
$t\in[0,T)$,  $x\in\R^d$,
$\delta\in \setCalP{0}{T}$  let
$
(Y^{\delta,\theta}_{t,s}(x,\omega))_{s\in[t,T],\omega\in\Omega}\colon[t,T]\times\Omega\to\R^d$ 
 satisfy  for all $s\in(t,T]$  that $Y_{t,t}^{\delta,\theta}(x)=x$ and 
\begin{align}\small\begin{split}\label{v01}
&Y_{t,s}^{\delta,\theta}(x)\\
& = \left[z + \1_{{\setD}_{\size{\delta\cup\{t\}}}}(z)\left(\mu(z)(s-\rdown{s}{\delta\cup\{t\}})+
\frac{\sigma(z)(W^\theta_{s} -W^\theta_{\rdown{s}{\delta\cup\{t\}}} )}{1+\left\|\sigma(z)(W_{s}^\theta -W^\theta_{\rdown{s}{\delta\cup\{t\}}} )\right\|^2}\right)\right]\Biggr|_{\begin{matrix}
z=Y_{t,\rdown{s}{\delta\cup\{t\}}}^{\delta,\theta}(x)
\end{matrix}},
\end{split}\end{align}
let
$ 
  {\bigV}_{ n,M}^{\delta,\theta} \colon [0, T] \times \R^d \times \Omega \to \R
$, $n,M\in\Z$, $\theta\in\Theta$, $\delta\in\setCalP{0}{T}$, satisfy
for all $n,M \in \N$, $\theta\in\Theta $, $\delta\in\setCalP{0}{T}$,
$ t \in [0,T]$, $x\in\R^d $
that ${\bigV}_{-1,M}^{\delta,\theta}(t,x)={\bigV}_{0,M}^{\delta,\theta}(t,x)=0$ and 
%\begin{equation}  \begin{gathered}
\begin{equation}\label{v02}
\begin{split}
  &{\bigV}_{n,M}^{\delta,\theta}(t,x)
=
  \frac{1}{M^n}
 \sum_{i=1}^{M^n} 
      \funcG \!\left(\sppr^{\delta,(\theta,0,-i)}_{t,T}(x)\right)
 \\
&  +
  \sum_{\ell=0}^{n-1} \frac{(T-t)}{M^{n-\ell}}
    \left[\sum_{i=1}^{M^{n-\ell}}
      \bigl(\funcF\bigl({\bigV}_{\ell,M}^{\delta,(\theta,\ell,i)}\bigr)-\1_{\N}(\ell)\funcF\bigl( {\bigV}_{\ell-1,M}^{\delta,(\theta,-\ell,i)}\bigr)\bigr)
      \Bigl(\uniform_t^{(\theta,\ell,i)},\sppr_{t,\uniform_t^{(\theta,\ell,i)}}^{\delta,(\theta,\ell,i)}(x)\Bigr)
    \right],
\end{split}
\end{equation}
let 
%$\bigl(\FEY_{\delta}\bigr)_{\delta\in (0,T)}$,
$\bigl(\FE^\delta_{n,M}\bigr)_{\delta\in \setCalP{0}{T},n\in\Z,M\in \N}$ satisfy for all $\delta\in\setCalP{0}{T}$, $n, M\in \N$ that
$\FE^{\delta}_{0,M}=0$ and
\begin{align}
\FE^{\delta}_{n,M}\leq \left(2+
\left\lceil \frac{T}{\size{\delta}} \right\rceil\right)M^n+\sum_{\ell=0}^{n-1}\left[M^{n-\ell}
\Biggl(3+
\left\lceil \frac{T}{\size{\delta}} \right\rceil+\FE^{\delta}_{\ell,M}+\1_{\N}(\ell)\FE^{\delta}_{\ell-1,M}
\Biggr)
\right],\label{c01}
\end{align} 
and
let $\funcEta\colon\R\to\R$ satisfy for all $a\in \R$ that
\begin{align}
\funcEta(a)=\exp \left(
\exp\left( 2\left[720 \max\{T,\alpha,1\}(ce^{\alpha T})^3 \right]^{ (720(ce^{\alpha T})^3\max\{T,1\}+7)\gamma}\right)|\min\{|a|,1\}|^{1/8}\right).\label{v13}
\end{align}
Then
\begin{enumerate}[i)]
\item \label{k20}
for every $t\in[0,T]$,  $x\in\R^d$, $\theta\in\Theta$
there exists an $(\F_{s})_{s\in[t,T]}$-adapted stochastic process
  $
(X_{t,s}^{\theta}(x,\omega))_{s\in[t,T],\omega\in\Omega}\colon[t,T]\times\Omega\to\R^d$ 
with continuous sample paths
which satisfies that for all $s\in[t,T]$ it holds  $\P$-a.s. that
\begin{align}
X_{t,s}^\theta(x)= x+\int_{t}^s \mu(X_{t,r}^\theta(x))\,dr +\int_t^s \sigma(X_{t,r}^\theta(x))\,dW_r^\theta,
\end{align}
\item \label{k21}there exists a unique
$\mathcal{B}([0,T]\times\R^d)/\mathcal{B}(\R)$-measurable function 
$u\colon [0,T]\times\R^d\to\R$ 
which satisfies that
$\sup_{t\in[0,T],x\in\R^d}[u(t,x)(\lyaV(x))^{-\betaT/p}]<\infty$ and 
 which satisfies 
 for all $t\in[0,T]$, $x\in\R^d$  that
$
\E\bigl[|g(X^0_{t,T}(x))|\bigr]+\int_{t}^{T}\E[|f(s,X_{t,s}^0(x),u(s,X_{t,s}^0(x)))| ]ds<\infty
$
and
\begin{align}
u(t,x)=\E\!\left[g(X^0_{t,T}(x))+\int_{t}^{T}f(s,X_{t,s}^0(x),u(s,X_{t,s}^0(x)))\,ds \right],
\end{align}
\item \label{k22}it holds for all $t\in[0,T]$, $x\in\R^d$, $\delta\in\setCalP{0}{T}$, $n,M\in\N$ that
\begin{align}\begin{split}\label{eq:error}
&\left(\E\!\left[|{\bigV}_{n,M}^{\delta,0}(t,x)-\smallU(t,x)|^2\right]\right)^{\!\nicefrac{1}{2}} \\
&\leq 
\left[\frac{e^{M/2}e^{2nLT}}{M^{n/2}}+\frac{\size{\delta}^{\nicefrac{1}{2}}}{T^{\nicefrac{1}{2}}}\right]
1184b^3e^{3T+\rho T+3LT(\funcEta(\size{\delta}))^{\nicefrac{1}{4}} +e^{\alpha T}U(x)/4}
(\lyaV(x))^{\frac{1}{2}},
\end{split}\end{align}
and 
\item\label{k23}there exist $(\mathsf{n}(\epsilon,x))_{x\in\R^d,\epsilon\in (0,1)}\subseteq\N$ such that
 for all 
 $\epsilon,\gamma\in (0,1)$,
$n\in[\mathsf{n}(\epsilon,x),\infty)\cap\N$,
 $t\in[0,T]$, $x\in\R^d$ it holds that $
\E\!\left[\bigl|{\bigV}_{n,n}^{\bfdelta_n,0}(t,x)-\smallU(t,x)\bigr|^2\right]<\epsilon^2
$ and  
\begin{align}\begin{split}\label{eq:effort}
&\left[\sum_{k=1}^{\mathsf{n}(\epsilon,x)}
\FE^{\bfdelta_k}_{k,k}\right]\epsilon^{4+\gamma}\\&\leq
 \sup_{n\in\N}\left[\frac{\left(5^ne^{2ncT}\right)^{\gamma+4}}{n^{\gamma n/2}}\right]
\left[10^4b^3e^{3T+\rho T+3LT(\funcEta(1))^{\nicefrac{1}{4}} +e^{\alpha T}U(x)/4}
(\lyaV(x))^{\frac{1}{2}}\right]^{\gamma+4}.
\end{split}\end{align}

\end{enumerate}
\end{theorem}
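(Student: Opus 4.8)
The plan is to deduce all four assertions from the strong-approximation estimates \cref{s05,r33,g03,r17} established above, together with the abstract multilevel Picard error analysis \cite[Corollary~3.12]{HJKN20} and the stability estimate \cite[Lemma~2.3]{HJKN20}, everything applied with the exponent $r=2$ — which is admissible because $p\ge 8\betaT=4r\betaT$ and because the local monotonicity hypothesis \eqref{w01} is exactly condition \eqref{r02} of \cref{g01} with $2r-1=3$. For \cref{k20,k21}: existence and pathwise uniqueness of the exact forward diffusions $X^\theta$ follow from the continuity of $\mu,\sigma$, the locally Lipschitz bound \eqref{v03}, the local monotonicity \eqref{w01}, and the Lyapunov bounds \eqref{v06}, \eqref{eq:exponential.integrability} by classical SDE theory for locally monotone coefficients (cf.\ \cite{CHJ14}); moreover \eqref{v06} and It\^o's formula give $\E[\lyaV(X^\theta_{t,s}(x))]\le e^{\rho(s-t)}\lyaV(x)$, and \cref{r33} supplies the exponential moment \eqref{r33b}. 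Unique solvability of the stochastic fixed-point equation for $u$ in the weighted space $\{w\colon \sup_{t,x}|w(t,x)|(\lyaV(x))^{-\betaT/p}<\infty\}$ then follows from a contraction argument based on the $L$-Lipschitz bound in \eqref{v07}, the growth bound \eqref{v04}, and the moment estimate above; this — together with the analogous statement for the fixed point $v^\delta$ obtained from the equation for $u$ by replacing the exact flow $X^0$ with the tamed-Euler flow $Y^{\delta,0}$ of \eqref{v01} — is part of the conclusion of \cite[Corollary~3.12]{HJKN20} once its hypotheses are checked.

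For \cref{k22} I would insert $v^\delta$ as an intermediate object: the triangle inequality bounds $(\E[|{\bigV}_{n,M}^{\delta,0}(t,x)-u(t,x)|^2])^{1/2}$ by the sum of $(\E[|{\bigV}_{n,M}^{\delta,0}(t,x)-v^\delta(t,x)|^2])^{1/2}$ and $(\E[|v^\delta(t,x)-u(t,x)|^2])^{1/2}$. The first summand is controlled by \cite[Corollary~3.12]{HJKN20} applied to the forward flow \eqref{v01} — whose hypotheses I would verify using \eqref{v04}, \eqref{v07}, \eqref{v07b}, the moment estimate \cref{s05} with Lyapunov function $\lyaV$, and the exponential moments \cref{r33} — which contributes the factor $e^{M/2}e^{2nLT}M^{-n/2}$; the second summand is controlled by \cite[Lemma~2.3]{HJKN20} fed with the strong error \cref{r17} applied with $r=2$, which is of size $\size{\delta}^{1/2}$ and contributes $\size{\delta}^{1/2}T^{-1/2}$. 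Collecting and bounding the constants coming out of \cref{g03,r17,r33} — notably $\funcEta(\size{\delta})$ and $e^{\alpha T}U(x)$ from the exponential integrability and $e^{\rho T}$ from the second moments — leads to the common prefactor displayed in \eqref{eq:error}.

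For \cref{k23} I would specialize \eqref{eq:error} to $n=M$ and $\delta=\boldsymbol{\delta}_n$ (see \eqref{r02b}), so that $\size{\boldsymbol{\delta}_n}=T/n^n\le 1$ once $n^n\ge T$: then the two error sources are both of order $n^{-n/2}$ up to the geometric factor $(e^{1/2}e^{2LT})^n$ and the nondecreasing factor $\funcEta(1)$, and I would define $\mathsf{n}(\epsilon,x)$ to be the least $n$ for which the resulting level-$n$ bound falls below $\epsilon$ (finite, since that bound decays super-exponentially in $n$). Since $\FE^{\boldsymbol{\delta}_n}_{n,n}$ is governed purely by the recursion \eqref{c01} with $M=n$ and $\lceil T/\size{\boldsymbol{\delta}_n}\rceil=n^n$, solving that recursion yields a bound of order $n^{2n}$ up to an $n$th power of an absolute constant and a polynomial factor, so $\sum_{k=1}^{\mathsf{n}(\epsilon,x)}\FE^{\boldsymbol{\delta}_k}_{k,k}$ is dominated by its last term; substituting the relation between $\mathsf{n}(\epsilon,x)$ and $\epsilon$ (from the minimality in the definition of $\mathsf{n}$) and absorbing the residual $n$-dependence — in particular the fact that $n^{2n}$ is eventually beaten by $n^{(2+\gamma/2)n}$ — into the finite quantity $\sup_{n\in\N}[(5^ne^{2ncT})^{\gamma+4}n^{-\gamma n/2}]$ produces \eqref{eq:effort}.

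I expect the main obstacle to be the bookkeeping in \cref{k22}: verifying, with dimension-explicit constants, every hypothesis of \cite[Corollary~3.12]{HJKN20} and \cite[Lemma~2.3]{HJKN20} — matching their abstract Lyapunov function, exponential-integrability data $(\funcUo,\funcUi,\alpha)$, and moment growth rate to the quantities produced here by \cref{s05,r33,g03,r17} — and then propagating each multiplicative constant (in particular $\rho$ and the factors $\funcEta(\size{\delta})$, $e^{\alpha T}U(x)$) so that the final estimate has exactly the stated shape, with the dimension entering \eqref{eq:error} only through $\lyaV(x)$, $U(x)$, $\rho$, $b$, and $\funcEta$, as required for the curse-of-dimensionality-free conclusion.
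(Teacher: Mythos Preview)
Your proposal is correct and follows essentially the same approach as the paper: both proofs hinge on the triangle inequality splitting via the auxiliary fixed point $v^\delta$, invoke \cite[Corollary~3.12]{HJKN20} for the $|{\bigV}_{n,M}^{\delta,0}-v^\delta|$ piece and \cite[Lemma~2.3]{HJKN20} together with \cref{r17} (at $r=2$) for the $|u-v^\delta|$ piece, and then handle \cref{k23} by specializing to $M=n$, $\delta=\boldsymbol{\delta}_n$ and bounding the effort recursion \eqref{c01}. The only cosmetic differences are that the paper cites \cite[Proposition~2.2]{HJKN20} (rather than Corollary~3.12) for the existence of $u$ and $v^\delta$, cites \cite[Corollary~2.6]{gk96b} for \cref{k20}, and packages the exponential-moment bookkeeping into an auxiliary weight $\psi(t,x)=[\exp(e^{\alpha(T-t)}\funcUo(x))]^{1/2}[e^{\rho(T-t)}\lyaV(x)]^{1/2}$ before applying \cite[Lemma~2.3]{HJKN20}.
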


\begin{remark}
We discuss the 
computational effort of the approximation method in \cref{v02}.
For every $n\in\N_0$, $M\in \N$, $\delta\in \setCalP{0}{T}$, $t\in[0,T]$, $x\in \R^d$, $\theta\in\Theta$ we think of $\FE^{\delta}_{n,M}$ as an upper bound for the number of function evaluations which are required to compute one realization of $V_{n,M}^{\delta,\theta}(t,x)$ in \eqref{v02}. Then \eqref{c01} can be explained as follows.
For  every $n,M\in \N$, 
$i\in \{1,2,\ldots,M^n\}$,
$\delta\in \setCalP{0}{T}$, $t\in[0,T]$, $x\in \R^d$, $\theta\in\Theta$
we need no more than $(1+
\left\lceil {T}/{\size{\delta}} \right\rceil)$ function evaluation to get one realization of
$
\sppr^{\delta,(\theta,0,-i)}_{t,T}(x)$ and therefore
we need no more than 
$(2+
\left\lceil {T}/{\size{\delta}} \right\rceil)$
function evaluations to get one realization of
$g(
\sppr^{\delta,(\theta,0,-i)}_{t,T}(x))$.
Next, for  every $n,M\in\N$,
$\ell\in \{0,1,\ldots,n \}$,
$i\in \{1,2,\ldots,M^{n-\ell}\}$,
$\delta\in \setCalP{0}{T}$, $t,s\in[0,T]$, $x,y\in \R^d$, $\theta\in\Theta$  
we need (see \eqref{v01}) no more than
$
1+\lceil{T}/{\size{\delta}}\rceil$ function evaluations  to 
get one realization of
$ \sppr_{t,\uniform_t^{(\theta,\ell,i)}}^{\delta,(\theta,\ell,i)}(x)$, 
we need (see \eqref{v02}) no more than
$\FE^{\delta}_{\ell,M}$ to get one realization of
${\bigV}_{\ell,M}^{\delta,(\theta,\ell,i)} (s,y)$, 
we need (see \eqref{v02}) no more than
$
\1_{\N}(\ell)\FE^{\delta}_{\ell-1,M}$ function evaluations
to get one realization of
$ {\bigV}_{\ell-1,M}^{\delta,(\theta,-\ell,i)}(s,y)$,
and therefore we need no more than
$
3+
\left\lceil {T}/{\size{\delta}} \right\rceil+\FE^{\delta}_{\ell,M}+\1_{\N}(\ell)\FE^{\delta}_{\ell-1,M}
$ function evaluations
to get one realization of
$ (\funcF\bigl({\bigV}_{\ell,M}^{\delta,(\theta,\ell,i)})-\1_{\N}(\ell)\funcF( {\bigV}_{\ell-1,M}^{\delta,(\theta,-\ell,i)}))
      (\uniform_t^{(\theta,\ell,i)},\sppr_{t,\uniform_t^{(\theta,\ell,i)}}^{\delta,(\theta,\ell,i)}(x))$. 

\end{remark}

\begin{proof}[Proof of \cref{m01}]
\newcommand{\bfdelta}{\boldsymbol{\delta}}
\sloppy
Throughout this proof let 
$\Delta\subseteq [0,T]^2$ be the set which satisfies that $\Delta= \{(t,s)\in[0,T]^2\colon t\leq s\}$ and
let $\lyaPsi\in C([0,T]\times\R^d,[0,\infty))$ be the function which
 satisfies for all  $t\in[0,T]$, $x\in\R^d$ that
\begin{align}\label{v11}
\lyaPsi(t,x)=\left[ \exp \left(e^{\alpha(T-t)}
\funcUo(x)\right)\right]^ {\frac{1}{2}} \left[e^{\rho (T-t)}\lyaV(x)\right]^{\frac{1}{2}}.
\end{align}
First, 
\eqref{v03}  and the assumption that 
$\phi\in C(\R^d,[0,\infty))$ prove
 that $\mu$ and $\sigma$ are locally Lipschitz continuous. This, 
\eqref{v06}, \eqref{v04},
and a standard result on SDEs with locally Lipschitz continuous coefficients
(see, e.g., \cite[Corollary~2.6]{gk96b}) imply \eqref{k20}. 

For the proof of \eqref{k21} we are going to apply \cite[Proposition~2.2]{HJKN20}
and first check the assumptions.
Jensen's inequality,
the fact that $p\in[8\betaT,\infty) $,
$1\leq c\leq \rho$, $\betaT\in [1,\infty)$, and
a combination of the assumption that
$ 
\kappaA\in (0, p/(3\betaT+1)]$,
\eqref{v01},
\eqref{v05}, \eqref{v06}, \eqref{v04}, the fact that $\rho=(5c^{2+\nfrac{1}{\kappaA}}p)^{3p}$,
\cite[Lemma~2.2]{CHJ14} (applied 
for $t\in[0,T)$, 
$s\in[t,T]$, $x\in\R^d$, $\theta\in\Theta$
with 
$T\defeq T-t$,
$O\defeq\R^d$,
$V\defeq( [0,T-t]\times\R^d\ni(s,x)\mapsto\lyaV(x)\in[0,\infty))$, $\alpha\defeq ([0,T-t]\ni s\mapsto c\in [0,\infty) )$, 
$\tau\defeq s-t$,
$X\defeq (X^\theta_{t,t+r}(x))_{r\in[0,T-t]}$ in the notation of \cite[Lemma~2.2]{CHJ14}),
and of
\cref{s05} (applied 
for $\delta\in\setCalP{0}{T}$, $s\in [0,T]$, $x\in\R^d$, $\theta\in\Theta$
with 
$\betaT\defeq \betaT-1$,
$V\defeq \lyaV$,
$\theta\defeq (\delta\cup\{s\})| _{[s,T]}$, $Y^{\theta,x}\defeq Y^{\delta,\theta}(x)$ in the notation of 
\cref{s05})
imply
for all 
$\theta\in\Theta$, 
$\delta\in\setCalP{0}{T}$,
$\mathfrak{X}\in\{X^\theta,Y^{\delta,\theta}\}$,
$t\in[0,T]$, $s\in [t,T]$, $x\in\R^d$
 that
\begin{align}\label{v08}\begin{split}
&e^{\rho \betaT (T-s)/p}\E\!\left[\bigl(\lyaV (\mathfrak{X}_{t,s}(x)\bigr)^{\frac{\betaT}{p}}\right]\leq 
e^{\rho \betaT (T-s)/p}
\left[\E\!\left[\bigl(\lyaV (\mathfrak{X}_{t,s}(x)\bigr)^{\frac{2\betaT}{p}}\right]\right]^{\frac{1}{2}}\\& \leq e^{\rho\betaT(T-s)/p}
\bigl(
\E\bigl[\lyaV \bigl(\mathfrak{X}_{t,s}(x)\bigr)\bigr]\bigr)^{\frac{\betaT}{p}} 
\leq e^{\rho\betaT(T-s)/p}
 e^{\growrate\betaT(s-t)/p}(\lyaV(x))^{\frac{\betaT}{p}}\\
&
= e^{\rho\betaT(T-t)/p}
(\lyaV(x))^{\frac{\betaT}{p}}
.
\end{split}\end{align}
Furthermore, Lemmas~\ref{r33} and~\ref{r17} (applied 
for $\delta\in \setCalP{0}{T}$, $s\in[0,T]$
with $r\defeq 2$, $\theta\defeq (\delta\cup\{s\})|_{[s,T]}$ in the notation of Lemmas~\ref{r33} and~\ref{r17}) combined with the assumptions of this theorem, and the fact that $T\vee1\leq e^T$ show for all $\delta\in \setCalP{0}{T}$, $\mathfrak{X}\in \{Y^{\delta,0},X^0\}$,
$t\in[0,T]$,
$s\in[t,T]$, $r\in[s,T]$, $x\in \R^d$ that
\begin{align}
\E\!\left[
\exp \left(e^{\alpha(T-s)}
\funcUo(\mathfrak{X}_{t,s}(x))\right)\right]\leq 
\funcEta(\size{\delta})
\exp \left(e^{\alpha(T-t)}
\funcUo(x)\right)\label{v09b}
\end{align}
and
\begin{align}
\begin{split}
&
\left(
\E\!\left[\bigl\|X_{s,r}^0\left(X^0_{t,s}(x)\right)-X_{s,r}^0\bigl(Y_{t,s}^{\delta,0}(x)\bigr)\bigr\|^{2}\right]\right)^{\nicefrac{1}{2}}\\&\leq 74\sqrt{2}(T\vee1)2^{3/2}c^2
e^{2T+\rho T/4}
\left[\funcEta(\size{\delta})\exp \left(e^{\alpha(T-t)}
\funcUo(x)\right)\lyaV(x)\right]^{\frac{1}{4}}
\size{\delta}^{\nicefrac{1}{2}}\\
&\leq 296 b^2
e^{3T+\rho T/4}
(\funcEta(\size{\delta}))^{\nicefrac{1}{4}}
\size{\delta}^{\nicefrac{1}{2}}
\left(\psi(t,x)\right)^{\nicefrac{1}{2}}.
\end{split}\label{v09c}\end{align}
Next, continuity of $\mu,\sigma$, path continuity of $W^\theta$, $\theta\in\Theta$, the fact that $ (\setD_h)_{h\in[0,T]}\subseteq\mathcal{B}(\R^d)$, and Fubini's theorem imply for all
$\mathcal{B}([0,T]\times\R^d)/\mathcal{B}([0,\infty)) $-measurable functions
$\eta\colon  [0,T]\times\R^d\to[0,\infty)$ and all
$\delta\in\setCalP{0}{T}$,
$\theta\in\Theta$
that  
\begin{align}
\Delta\times\R^d \ni (t,s,x)\mapsto
\E\bigl[\eta\bigl(s,Y_{t,s}^{\delta,\theta}(x)\bigr)\bigr]\in[0,\infty]
\text{ is }\mathcal{B}(\Delta\times\R^d)/\mathcal{B}([0,\infty])\text{-measurable.}\label{v09d}
\end{align}
Moreover,
local Lipschitz continuity of $\mu,\sigma$, \eqref{v06}, \eqref{v04}, 
and \cite[Lemma~3.7]{BGHJ19} (applied with $\mathcal{O}\defeq\R^d$, $V\defeq(
[0,T]\times\R^d\ni (t,x)\mapsto
e^{-\rho t}\lyaV(x)\in(0,\infty) )$ in the notation of \cite[Lemma~3.7]{BGHJ19}) imply for all $\theta\in\Theta$
and all
$\mathcal{B}([0,T]\times\R^d)/\mathcal{B}([0,\infty)) $-measurable functions
$\eta\colon  [0,T]\times\R^d\to[0,\infty)$ 
that  
\begin{align}
\Delta\times\R^d \ni (t,s,x)\mapsto
\E\bigl[\eta\bigl(s,X_{t,s}^{\theta}(x)\bigr)\bigr]\in[0,\infty]
\text{ is }\mathcal{B}(\Delta\times\R^d)/\mathcal{B}([0,\infty])\text{-measurable.}
\end{align}
This,  \eqref{v09d}, \eqref{v08},
\eqref{v07}, \eqref{v04}, and
\cite[Proposition~2.2]{HJKN20} (applied 
for  $t\in [0,T]$, $x\in\R^d$, $\delta\in\setCalP{0}{T}$
with
$\mathcal{O}\defeq \R^d$, $ (X_{t,s}^x)_{s\in[t,T]}\defeq (Y_{t,s}^{\delta,0}(x))_{s\in[t,T]}
$, $ (X_{t,s}^x)_{s\in[t,T]}\defeq (X_{t,s}^{0}(x))_{s\in[t,T]} $, 
$
V\defeq ([0,T]\times\R^d\ni(t,x)\mapsto e^{\growrate\betaT  (T-t)/p}(\lyaV (x))^{\betaT /p}\in(0,\infty))
$
 in the notation of \cite[Proposition~2.2]{HJKN20})
 establish that 
\begin{enumerate}[a)]
\item 
 there exist unique
$\mathcal{B}([0,T]\times\R^d)/\mathcal{B}(\R)$-measurable functions
$\smallU, \smallV_\delta\colon [0,T]\times\R^d\to\R$, $\delta\in\setCalP{0}{T}$, which satisfy  
that
$\sup_{t\in[0,T],x\in\R^d,\delta\in\setCalP{0}{T}} \left[(|\smallU(t,x)|\vee|\smallV_\delta(t,x)|){(\lyaV(x))^{-\betaT /p}}\right]<\infty$
and which satisfy  for all $\delta\in \setCalP{0}{T}$, $t\in[0,T]$, $x\in\R^d$  that
\begin{align}\begin{split}
&\textstyle \E\!\left[\bigl|g\bigl(Y^{\delta,0}_{t,T}(x)\bigr)\bigr|+\int_{t}^{T}\bigl|f\bigl(s,Y_{t,s}^{\delta,0}(x),\smallV_\delta\bigl(s,Y_{t,s}^{\delta,0}(x)\bigr)\bigr)\bigr|\,ds \right]<\infty,
\\
&\textstyle\E\!\left[\bigl|g\bigl(X^{0}_{t,T}(x)\bigr)\bigr|+\int_{t}^{T}\bigl|f\bigl(s,X_{t,s}^{0}(x),\smallU\bigl(s,X_{t,s}^{0}(x)\bigr)\bigr)\bigr|\,ds \right]<\infty,
\\
&\smallV_\delta(t,x)=\E\!\left[g\bigl(Y^{\delta,0}_{t,T}(x)\bigr)+\int_{t}^{T}f\bigl(s,Y_{t,s}^{\delta,0}(x),\smallV_\delta(s,Y_{t,s}^{0,\delta}(x))\bigr)\,ds \right],
\quad\text{and}\quad\\
&\smallU(t,x)=\E\!\left[g\bigl(X^{0}_{t,T}(x)\bigr)+\int_{t}^{T}f\bigl(s,X_{t,s}^0(x),\smallU(s,X_{t,s}^0(x))\bigr)\,ds \right]
\end{split}\label{v15}\end{align}
and \item it holds for all $\delta\in \setCalP{0}{T}$  that 
\begin{align}\label{v14}\small\begin{split}
\sup_{t\in[0,T],x\in\R^d} \left[\frac{|\smallU(t,x)|\vee|\smallV_\delta(t,x)|}{e^{\growrate \betaT  (T-t)/p}(\lyaV(x))^{\betaT /p}}\right]\leq  
\sup_{t\in[0,T],x\in\R^d}\left[\frac{|g(x)|}{(\lyaV(x))^{\beta/p}}
+\frac{|Tf(t,x,0)|}{(\lyaV(x))^{\beta /p}}\right]e^{L T}\leq 
2be^{LT}.
\end{split}\end{align}
\end{enumerate}
This proves \eqref{k21}. 

For the proof of \eqref{k22} we are going to apply \cite[Proposition~3.12]{HJKN20}
and first check the assumptions.
Note that \eqref{v11}, the Cauchy-Schwarz inequality, \eqref{v09b}, and
\eqref{v08}
 show for all 
$\delta\in\setCalP{0}{T}$,
$\mathfrak{X}\in \{Y^{\delta,0},X^0\}$,
$t\in[0,T]$, $s\in[t,T]$, $x\in\R^d$ that
\begin{align}\begin{split}
&\E\!\left[\lyaPsi(s,\mathfrak{X}_{t,s}(x))\right]
= 
\E\!\left[ \left(\exp \left(e^{\alpha(T-s)}
\funcUo(\mathfrak{X}_{t,s}(x))\right)\right)^{\frac{1}{2}}\left( e^{\rho (T-s)}\lyaV(\mathfrak{X}_{t,s}(x))\right)^{\frac{1}{2}}\right]\\
&\leq  
\left(\E\!\left[
\exp \left(e^{\alpha(T-s)}
\funcUo(\mathfrak{X}_{t,s}(x))\right)\right]
\right)^{\frac{1}{2}}
\left(\E\!\left[e^{\rho (T-s)}\lyaV(\mathfrak{X}_{t,s}(x))\right]
\right)^{\frac{1}{2}}\\
&\leq \left[\funcEta(\size{\delta})
\exp \left(e^{\alpha(T-t)}
\funcUo(x)\right)
\right]^{\frac{1}{2}}
\left(e^{\rho (T-s)}e^{\rho (s-t)}\lyaV(x)
\right)^{\frac{1}{2}}=(\funcEta(\size{\delta}))^{\nicefrac{1}{2}}\lyaPsi(t,x).
\end{split}\label{v17}\end{align}
\sloppy
Lipschitz continuity of $\mu,\sigma$ (see \eqref{v03}) implies 
for all $t\in[0,T]$, $s\in[t,T]$,
$r\in[s,T]$,
 $x\in\R^d$   that
$
\P\circ (\exactpr_{s,r}^{\exactpr_{t,s}^x})^{-1}= \P\circ (\exactpr_{t,r}^x)^{-1}.
$
This combined with
 \eqref{k20}, \eqref{v01},  \eqref{v08},
\eqref{v17}, \eqref{v15},  \eqref{v04}, 
\eqref{v14}, \eqref{v07}, 
 \eqref{v07b},
\eqref{v09c},
 and with
\cite[Lemma~2.3]{HJKN20} (applied 
for $\delta\in\setCalP{0}{T}$
with 
$  \eta\defeq(\funcEta(\size{\delta}))^{\nicefrac{1}{2}}$,
$\delta\defeq   296b^2
e^{3T+{\rho T}/{4}}
(\funcEta(\size{\delta}))^{\nicefrac{1}{4}}
\size{\delta}^{\nicefrac{1}{2}}$, 
$
 p\defeq {p}/ \betaT $,
$ q\defeq 2$,
$\|\cdot\|\defeq\|\cdot\||_{\R^d}$,
$(X_{t,s}^{x,1})_{t\in[0,T],s\in[t,T],x\in\R^d}\defeq (X_{t,s}^0(x))_{t\in[0,T],s\in[t,T],x\in\R^d}$,
$(X_{t,s}^{x,2})_{t\in[0,T],s\in [t,T],x\in\R^d}\defeq (Y_{t,s}^{\delta,0}(x))_{t\in[0,T],s\in [t,T],x\in\R^d}$,
$V\defeq b^{p/\betaT}\lyaV$,
$u_1\defeq u$,
$u_2\defeq v_\delta$  in the notation of \cite[Lemma~2.3]{HJKN20}),
the fact that
$1+LT\leq e^{LT}$, 
a combination of
the fact that 
$\funcEta\geq1$ (see \eqref{v13})
and of the fact that
$\forall\,a\in[1,\infty)\colon ae^{LT}\leq e^{aLT}$, 
\eqref{v11},
and a combination of
the fact that $\lyaV\geq1$ and of
the fact that $ \betaT /{p}\leq 1/4$  imply 
for all $t\in[0,T]$, $x\in\R^d$, $\delta\in\setCalP{0}{T}$ that
\begin{align}\begin{split}
&|\smallU(t,x)-  \smallV_\delta(t,x)|\leq
4(1+LT)T^{-\nicefrac{1}{2}}e^{LT+\nfrac{\rho \betaT  T}{p}+LT(\funcEta(\size{\delta}))^{\nicefrac{1}{4}}}\\&\qquad\qquad\qquad\qquad\qquad\qquad (b^{\frac{p}{ \betaT }}\lyaV(x))^{\frac{ \betaT }{p}}  (\lyaPsi(t,x))^{\frac{1}{2}} \left[
296b^2
e^{3T+\nfrac{\rho T}{4}}
(\funcEta(\size{\delta}))^{\nicefrac{1}{4}}
\size{\delta}^{\nicefrac{1}{2}}\right]\\
&\leq 1184b^3T^{-\nicefrac{1}{2}}
e^{3T+\nfrac{\rho \betaT T}{p}+\nfrac{\rho T}{4}+3LT(\funcEta(\size{\delta}))^{\nicefrac{1}{4}}}
\\&\qquad\qquad\qquad\qquad \qquad\qquad
(\lyaV(x))^{\frac{ \betaT }{p}}  
\exp\left(\tfrac{1}{4}e^{\alpha T}U(x)\right)
e^{\rho T/4}(\lyaV(x))^{\frac{1}{4}}
\size{\delta}^{\nicefrac{1}{2}}\\
&\leq 
1184b^3T^{-\nicefrac{1}{2}}
e^{3T+\rho T+3LT(\funcEta(\size{\delta}))^{\nicefrac{1}{4}}}
(\lyaV(x))^{\frac{1}{2}}
\exp\left(\tfrac{1}{4}e^{\alpha T}U(x)\right)
\size{\delta}^{\nicefrac{1}{2}}.
\end{split}\label{v16}\end{align}
Next, \cite[Corollary~3.12]{HJKN20} (applied 
for $\delta\in\setCalP{0}{T}$, $t\in[0,T)$
with $\rho\defeq 2 \betaT \rho/p$, 
$(Y^\theta_{t,s})_{t\in[0,T],s\in[t,T],\theta\in\Theta}\defeq (Y^{\delta,\theta}_{t,s})_{t\in[0,T],s\in[t,T],\theta\in\Theta}$,
$(U^\theta_{n,M\in\Z})_{n,M\in\Z,\theta\in\Theta}\defeq (\bigV_{n,M\in\Z}^{\delta,\theta})_{n,M\in\Z,\theta\in\Theta}$,
$u\defeq \smallV^{\delta}$,
$\varphi\defeq \lyaV^{2 \betaT /p}$, $\tau\defeq t$ in the notation of \cite[Corollary~3.12]{HJKN20}), \eqref{r40e},
\eqref{v07}, 
\eqref{v15},
 \eqref{v04}, \eqref{v14},
 and  \eqref{v08}  imply for all $\delta\in \setCalP{0}{T}$, $t\in[0,T]$, $n,M\in\N$ that
\begin{align}\begin{split}\label{eq:V.v}
&\sup_{x\in\R^d}\left[\frac{
\E\!\left[|{\bigV}_{n,M}^{\delta,0}(t,x)-\smallV_\delta(t,x)|^2\right]}{(\lyaV(x))^{2 \betaT /p}}\right]^{\!\nicefrac{1}{2}}\leq
2 e^{M/2}M^{-n/2}(1+2TL)^{n-1}e^{\rho \betaT  T/p}
\\& \quad\cdot
\left[\sup_{s\in[0,T],x\in\R^d}\left[\frac{|T(\funcF(0))(s,x)|\vee|g(x)|}{(\lyaV(x))^{ \betaT /p}}\right]+TL\sup_{s\in[0,T],x\in\R^d}\left[
\frac{|\smallV_\delta(s,x)|}{(\lyaV(x))^{ \betaT /p}}\right]\right]\\
&\leq e^{M/2}M^{-n/2}(1+2TL)^{n-1}e^{\rho \betaT  T/p}\left[2b+4TLb e^{LT+\rho \betaT T/p}\right]\\
&\leq  2be^{M/2}M^{-n/2}(1+2TL)^{n} e^{LT+2\rho \betaT T/p}\leq 
2be^{M/2}M^{-n/2}e^{2nLT} e^{LT+2\rho \betaT T/p}.
\end{split}\end{align}
This, the triangle inequality,  \eqref{v16}, the fact that
$2 \betaT \leq p $,
the fact that $b,\funcEta,\lyaV\geq 1$,
and the fact that $\forall\,a\in \R\colon \funcEta(a)\leq \funcEta(1) $ (see \eqref{v13})
 show  for all
$n,M\in\N$, $t\in[0,T]$, $x\in\R^d$, $\delta\in\setCalP{0}{T}$  that
\begin{align}\begin{split}
&\left(\E\!\left[|{\bigV}_{n,M}^{\delta,0}(t,x)-\smallU(t,x)|^2\right]\right)^{\!\nicefrac{1}{2}} \leq 
\left(\E\!\left[|{\bigV}_{n,M}^{\delta,0}(t,x)-\smallV_\delta(t,x)|^2\right]\right)^{\!\nicefrac{1}{2}}+
|\smallU(t,x)-  \smallV_\delta(t,x)|\\
&
\leq 
2b e^{M/2}M^{-n/2}e^{2nLT+LT+2\rho \betaT T/p}
(\lyaV(x))^{ \betaT /p}\\&\quad+1184b^3T^{-\nicefrac{1}{2}}
e^{3T+\rho T+3LT(\funcEta(\size{\delta}))^{\nicefrac{1}{4}}}
(\lyaV(x))^{\frac{1}{2}}
\exp\left(\tfrac{1}{4}e^{\alpha T}U(x)\right)
\size{\delta}^{\nicefrac{1}{2}}\\
&
\leq 
\left({e^{M/2}e^{2nLT}}{M^{-n/2}}+{\size{\delta}^{\nicefrac{1}{2}}}{T^{-\nicefrac{1}{2}}}\right)
1184b^3e^{3T+\rho T+3LT(\funcEta(\size{\delta}))^{\nicefrac{1}{4}} +e^{\alpha T}U(x)/4}
(\lyaV(x))^{\frac{1}{2}}\\
&
\leq 
\left({e^{M/2}e^{2nLT}}{M^{-n/2}}+{\size{\delta}^{\nicefrac{1}{2}}}{T^{-\nicefrac{1}{2}}}\right)
1184b^3e^{3T+\rho T+3LT(\funcEta(1))^{\nicefrac{1}{4}} +e^{\alpha T}U(x)/4}
(\lyaV(x))^{\frac{1}{2}}
.
\end{split}\label{c02}\end{align}
This shows \eqref{k22}.

To prove \eqref{k23}
let $(\mathsf{n}(\epsilon,x))_{\epsilon\in (0,1), x\in\R^d}\subseteq[0,\infty]$ satisfy for all 
 $\epsilon\in (0,1)$, $x\in\R^d$
that 
\begin{align}\label{r33f}\begin{split}
\mathsf{n}(\epsilon,x)= \inf\left(\left\{n\in\N\colon 
\sup_{k\in[n,\infty)\cap\N,t\in[0,T]}
\E\Bigl[\bigl|{\bigV}_{k,k}^{\bfdelta_k,0}(t,x)-\smallU(t,x)\bigr|^2\Bigr]<\epsilon^2
\right\}\cup\{\infty\}\right).
\end{split}\end{align}
The fact that 
$\lim_{n\to\infty}(e^{n/2}e^{2ncT}n^{-n/2})=0$ and
 \eqref{r21} then show for all $x\in\R^d$, $\epsilon\in(0,1)$ that \begin{align}
\mathsf{n}(\epsilon,x)\in\N.\label{r01}
\end{align} Next,
a combination of 
\eqref{c01} and of
\cite[Lemma~3.6]{HJK+18} (applied 
with $d\defeq (2+\lceil T/\size{\delta}\rceil)$, $(\mathrm{RV}_{n,M})_{n,M\in\Z}\defeq (\FE_{n,M})_{n,M\in\Z}$ in the notation of \cite[Lemma~3.6]{HJK+18})
and the fact that
$\forall \,\delta\in \setCalP{0}{T}\colon 2+ \left\lceil {T}/{\size{\delta}} \right\rceil\leq 4T/\size{\delta}$
show  for all $\delta\in\setCalP{0}{T}$, $n,M\in\N$ that
$
\FE^{\delta}_{n,M}\leq \left(2+ \left\lceil {T}/{\size{\delta}}\right\rceil\right) (5M)^n\leq 4T\size{\delta}^{-1}(5M)^n.
$ This, \eqref{r02}, and the fact that $\forall\,n\in\N\colon n^3\leq 3^n$ show 
for all 
$n\in\N$,
$k\in[1,n+1]\cap\N$ that
\begin{align}\begin{split}
2n\FE^{\bfdelta_{k}}_{k,k}
\leq 
8n{T}{\size{\bfdelta_{k}}}^{-1}(5k)^{k}
\leq 
8n(5k^2)^k\leq 8n (20n^2)^{n+1}=160n^320^{n}n^{2n}\leq 
160n^{2n}60^n.\label{r01b}
\end{split}\end{align}
This implies for all 
 $\gamma\in (0,\infty)$,
$n\in\N$, $t\in[0,T]$, $x\in\R^d$
 that
$\sum_{k=1}^{n+1}
\FE^{\bfdelta_{k}}_{k,k}\leq 160n^{2n}60^n$ and 
\begin{align}\begin{split}
& \left[\sum_{k=1}^{n+1}
\FE^{\bfdelta_{k}}_{k,k}\right]\left[
\left(\E\Bigl[\bigl|{\bigV}_{n,n}^{\bfdelta_n,0}(t,x)-\smallU(t,x)\bigr|^2\Bigr]\right)^{\frac{4+\gamma}{2}}\right]\leq 
160n^{2n}60^n\\&\quad\qquad\left[\left({e^{n/2}e^{2nLT}}{n^{-n/2}}+{\size{\bfdelta_n}^{\nicefrac{1}{2}}}{T^{-\nicefrac{1}{2}}}\right)
1184b^3e^{3T+\rho T+3LT(\funcEta(1))^{\nicefrac{1}{4}} +e^{\alpha T}U(x)/4}
(\lyaV(x))^{\frac{1}{2}}\right]^{\gamma+4}\\
&= 
160n^{2n}60^n\left[\left({e^{n/2}e^{2nLT}}{n^{-n/2}}+n^{-n/2}\right)
1184b^3e^{3T+\rho T+3LT(\funcEta(1))^{\nicefrac{1}{4}} +e^{\alpha T}U(x)/4}
(\lyaV(x))^{\frac{1}{2}}\right]^{\gamma+4}\\
&\leq 160n^{-\gamma n/2}60^n
\left[2{e^{n/2}e^{2nLT}}
1184b^3e^{3T+\rho T+3LT(\funcEta(1))^{\nicefrac{1}{4}} +e^{\alpha T}U(x)/4}
(\lyaV(x))^{\frac{1}{2}}\right]^{\gamma+4}
\\
&
\leq n^{-\gamma n/2}
\left[160^{\nicefrac{1}{4}}2{e^{n/2}60^{{n}/{4}}e^{2nLT}}
1184b^3e^{3T+\rho T+3LT(\funcEta(1))^{\nicefrac{1}{4}} +e^{\alpha T}U(x)/4}
(\lyaV(x))^{\frac{1}{2}}\right]^{\gamma+4}\\
&
\leq n^{-\gamma n/2}
\left[5^n e^{2nLT}
10^4b^3e^{3T+\rho T+3LT(\funcEta(1))^{\nicefrac{1}{4}} +e^{\alpha T}U(x)/4}
(\lyaV(x))^{\frac{1}{2}}\right]^{\gamma+4}.
\end{split}\end{align}
This,  \eqref{r01b}, and \eqref{r01}
imply that for all 
 $\epsilon,\gamma\in (0,1)$,
 $t\in[0,T]$, $x\in\R^d$ it holds 
in the case  $\mathsf{n}(\epsilon,x)=1$ that
$
\sum_{k=1}^{\mathsf{n}(\epsilon,x)}
\FE^{\bfdelta_k}_{k,k}\epsilon^{4+\gamma}\leq 
\FE^{\bfdelta_1}_{1,1}\leq 160\cdot 60
$ and it holds
in the case $\mathsf{n}(\epsilon,x)\geq 2$ 
that
\begin{align}\begin{split}
&
\left[\sum_{k=1}^{\mathsf{n}(\epsilon,x)}
\FE^{\bfdelta_k}_{k,k}\right]\epsilon^{4+\gamma}\leq 
\left[
\left[\sum_{k=1}^{n+1}
\FE^{\bfdelta_k}_{k,k}\right]
\left(\E\Bigl[\bigl|{\bigV}_{n,n}^{\bfdelta_n,0}(t,x)-\smallU(t,x)\bigr|^2\Bigr]\right)^{\frac{4+\gamma}{2}}\right]\Biggr|_{n=\mathsf{n}(\epsilon,x)-1}\\
&
\leq \sup_{n\in\N}\left[n^{-\gamma n/2}\left(5^ne^{2ncT}\right)^{\gamma+4}\right]
\left[10^4b^3e^{3T+\rho T+3LT(\funcEta(1))^{\nicefrac{1}{4}} +e^{\alpha T}U(x)/4}
(\lyaV(x))^{\frac{1}{2}}\right]^{\gamma+4}.
\end{split}\end{align}
This, the fact that $b,c,\lyaV\geq 1$, and the fact that $160\cdot 60\leq (10^4)^4$ prove that 
\begin{align}
\left[\sum_{k=1}^{\mathsf{n}(\epsilon,x)}
\FE^{\bfdelta_k}_{k,k}\right]\epsilon^{4+\gamma}\leq
 \sup_{n\in\N}\left[\tfrac{\left(5^ne^{2ncT}\right)^{\gamma+4}}{n^{\gamma n/2}}\right]
\left[10^4b^3e^{3T+\rho T+3LT(\funcEta(1))^{\nicefrac{1}{4}} +e^{\alpha T}U(x)/4}
(\lyaV(x))^{\frac{1}{2}}\right]^{\gamma+4}.
\end{align} 
This, \eqref{r33f}, and \eqref{r01}
imply \eqref{k23}.
The proof of \cref{m01}
is thus completed.
\end{proof}
The error estimate \cref{eq:error} shows that the $L^2$-distance between
our approximations and the PDE solution at a fixed space-time point converges
to $0$ as $n\to\infty$ for every fixed $M>e^{2LT}$.
To get an computational effort of order $4+$ as in \cref{eq:effort}, however,
we need to let $M\to\infty$ as $n\to\infty$.
We typically choose $M=n$. In this case, the error decays up to exponential factors 
like $n^{-n/2}$ (resp.\ like $1/\sqrt{n!}$)
and the number of function evaluations grows up to exponential factors like
$n^{2n}$ (resp.\ like $(n!)^2$) as $n\to\infty$; cf.\ \cref{r01b}.

In the following \cref{e01} we show that
 semilinear PDEs corresponding to competitive
Lotka-Volterra equations can be approximated without curse of
dimensionality under suitable assumptions on the parameters.
We note that the coefficients of \eqref{e02} are not
globally Lipschitz continuous.

\begin{example}\label{e01}
Consider the notation from \cref{s05b},
let $T \in (0,\infty)$, 
$
r = (r_i)_{i\in\N} \in [0,\infty)^{\N}
$,
$a= (a_{ij})_{i,j \in \N} \in [0,\infty)^{\N \times \N}$
satisfy that $\sup_{i \in \N} r_i<\infty$
and $\sup_{i\in \N} \sum_{j\in\N} \lvert a_{ij}\rvert^2 <\infty$, 
for every 
$x\in\R$ let $x^+\in \R$ satisfy that
$x^+=\max\{0,x\}$,
and for every $d \in \N$ let
$u_d \colon [0,T] \times \R^d \to\R$ be a viscosity solution of the~PDE
\begin{align}\label{e02}\begin{split}
&
\left(\frac{\partial}{\partial t}u_d\right)(t,x)+
\sum_{i=1}^{d}\left[
r_i x_i \left(1- \sum_{j=1}^{d}a_{ij}x_j^+ \right)
\left(
\frac{\partial }{\partial x_i}u_d\right)(t,x)\right]
+ \sum_{i=1}^{d}
\frac{\lvert x_i\rvert^2(\frac{\partial^2}{\partial x_i^2}u_d)(t,x)}{2(1+\lVert x\rVert^2)^2}\\
&=
-\sin (\lVert x \rVert+u_d(t,x))
\quad\text{and}\quad u_d(T,x)=\lVert x\rVert^2\quad \forall\,
(t,x)\in[0,T]\times  \R^d.
\end{split}
\end{align}
Then $(u_d(0,0))_{d \in \N}$ can be approximated without suffering from the curse of dimensionality.

To prove this statement we apply \cref{m01} and show that all assumptions are satisfied.
For this  fix $d\in\N$, 
  and
let
$\alpha,\bar{r},\bar{a}\in [0,\infty)$,
$\bar{U}\in C(\R^d,[0,\infty))$,
$\varphi\in C^3(\R^d,[1,\infty))$,
$U\in C^3(\R^d,[0,\infty))$,
$g\in C(\R^d,\R)$,
$f\in C([0,1]\times\R^d\times\R,\R)$,
 $\mu=(\mu_1,\mu_2, \ldots,\mu_d)\in C(\R^d,\R^d)$,
 $\sigma=(\sigma_1,\sigma_2,\ldots,\sigma_d)\in C(\R^d,\R^{d\times d})$
 satisfy for all 
$i,j\in \{1,2,\ldots,d\}$, 
$x=(x_1,x_2,\ldots,x_d)$,
$t\in[0,T] $, $w\in\R$
that  
\begin{align}
\mu_i(x)=  r_i x_i \left(1- \sum_{j=1}^{d}a_{ij}x_j^+ \right),\quad 
\sigma(x)= \mathrm{diag}\left(\frac{x_1}{1+\lVert x\rVert^2} ,\frac{x_2}{1+\lVert x\rVert^2},\ldots,\frac{x_d}{1+\lVert x\rVert^2}\right)
  ,\label{d01}
\end{align}
\begin{align}\label{d03}
f(t,x,w)= \sin (\lVert x \rVert +w),\quad g(x)=\lVert x\rVert^2,
\end{align}
\begin{align}\label{d04}
\varphi(x)=(1+\lVert x\rVert^2)^8,\quad 
U(x)= 8T\left(8T
e^{\alpha T}\bar{r}^2\bar{a}^2+3+\bar{r}\right)+e^{-\alpha T}\lVert x\rVert^2, \quad \bar{U}(x)=0,
\end{align}
\begin{align}\label{d02}
\bar{r}=\sup_{r\in\N}r_i,\quad 
\bar{a}=\left(
\sup_{i\in \N} \sum_{j\in\N} \lvert a_{ij}\rvert^2\right)^{\frac{1}{2}},\quad \alpha = 2\bar{r}+3.
\end{align}
%In the following we verify that 
%$\mu,\sigma,f,g,\varphi , U , \bar{U}$ satisfy the conditions of \cref{m01} with suitable constants.
%
First, \eqref{d01} and \eqref{d02} show for all $x=(x_1,x_2,\ldots,x_d)\in\R^d$ that 
\begin{align}
 \langle x,\mu(x)\rangle=\xeqref{d01} \sum_{i=1}^{d} \left(r_i x_i^2 \left(1- \sum_{j=1}^{d}a_{ij}x_j^+ \right)\right)\leq \xeqref{d02}\bar{r}\lVert x\rVert^2
\quad\text{and}\quad\lVert \sigma(x)\rVert \leq\xeqref{d01} \lVert x\rVert
.\label{d06}
\end{align}
This and \eqref{d04}
show for all $x\in\R^d$ that
$
\langle \mu(x),x\rangle+7.5\lVert\sigma (x)\rVert^2
\leq (\bar{r}+7.5)\lVert x\rVert^2\leq 
(\bar{r}+7.5)(\varphi (x))^{\frac{1}{8}}
.
$
This and \cref{s28} (applied with $m\gets d$, $p\gets 8$, $a\gets 1$,
$c\gets\bar{r}+7.5$ in the notation of \cref{s28}) show  for all $x\in\R^d$, $i\in\{1,2,3\}$ that
\begin{align}\label{a01}
\|(\totalD^i \varphi)(x)\|_{\mlinear{i}{\R^d}{\R}}   \leq 16^i\varphi(x)^{1-\frac{i}{16}}
\end{align} and
\begin{align}\label{a02}
((\totalD \varphi)(x))(\mu(x))+\frac{1}{2}\sum_{k=1}^{d} ((\totalD^2 \varphi) (x))(\sigma_k(x),\sigma_k(x))\leq 16(\bar{r}+7.5) \varphi(x)
. 
\end{align}
Furthermore, \eqref{d03}, 
the fact that 
$ \forall\,x,y\in\R\colon \lvert\sin(x)-\sin(y)\rvert\leq \lvert x-y\rvert$, and
the triangle inequality  show for all $t\in[0,T]$, $x,y\in\R^d$, $w_1,w_2\in\R$ that
\begin{align}\begin{split}
&
\lvert
f(t,x,w_1)-f(t,y,w_2)\rvert=\xeqref{d03}\lvert \sin (\lVert x\rVert+w_1)-\sin (\lVert y\rVert+w_2)\rvert\\
&\leq \bigl\lvert (\lVert x\rVert + w_1)- (\lVert y\rVert+w_2)\bigr\rvert\leq \lvert w_1-w_2\rvert+\lVert x-y\rVert.
\end{split}\label{a03}\end{align}
Next, \eqref{d03},  the fact that 
$ \forall\,x\in\R\colon x\leq \frac{1+x^2}{2}$, and 
\eqref{d04}
 show for all $t\in[0,T]$, $x,y\in\R^d$ that
\begin{align}\begin{split}
&
\lvert g(x)-g(y)\rvert
=\xeqref{d03}
\left\lvert\lVert x\rVert^2-\lVert y\rVert^2\right\rvert
= \left
(\lVert x\rVert+\lVert y\rVert\right)\left(\lVert x\rVert-\lVert y\rVert\right)
\\
&
\leq 0.5 (1+\lVert x\rVert^2+1+\lVert y\rVert^2)\lVert x-y\rVert\leq\xeqref{d04} 0.5\left[ (\varphi(x))^{\frac{2}{16}} + (\varphi(y))^{\frac{2}{16}}\right]\lVert x-y\rVert.\end{split}
\label{a04}
\end{align}
In addition, \eqref{d01}, the triangle inequality, 
\eqref{d02}, 
the Cauchy--Schwarz inequality,
and
the fact that
$\forall\,x\in\R\colon \lvert x^+\rvert\leq \lvert x\rvert$  show for all
$x=(x_1,x_2,\ldots,x_d)\in\R^d$ that
\begin{align}
&\lVert
\mu(x)
\rVert= \xeqref{d01}\left( \sum_{i=1}^{d}\left\lvert r_i x_i \left(1- \sum_{j=1}^{d}a_{ij}x_j^+ \right)\right\rvert^2\right)^{\frac{1}{2}}\leq 
\bar{r}\lVert
x\rVert+
\left( \sum_{i=1}^{d}\left\lvert r_i x_i  \sum_{j=1}^{d}a_{ij}x_j^+ \right\rvert^2\right)^{\frac{1}{2}}\nonumber\\
&\leq \bar{r}\lVert x\rVert+
\bar{r}\left( \sum_{i=1}^{d}\left\lvert  x_i  \right\rvert^2\right)^{\frac{1}{2}}\left(\max_{i\in\N}\sum_{j=1}^{d}a_{ij}x_j^+ \right)\leq 
\bar{r}\lVert x\rVert+\bar{r}\lVert x\rVert
\left(\max_{i\in\N}\sum_{j=1}^{d}\lvert a_{ij}\rvert^2 \right)^{\frac{1}{2}}
\left(\sum_{j=1}^{d}
\lvert x_j^+\rvert^2\right)^{\frac{1}{2}}\nonumber\\
&\leq \bar{r}\lVert x\rVert+\bar{r}\bar{a}\lVert x\rVert^2.
\end{align}
This, \eqref{d01}--\eqref{d04}, and the fact that
$\forall\,x\in\R\colon x\leq 0.5(1+x^2)$ show for all
$x\in \R^d$ that
\begin{align}
\begin{split}
&
 |Tf(t,x,0)|+|g(x)|+\|\mu(x)\|+ \|\sigma(x)\|^2+ \|x\| \\
&\leq 
  T+ \lVert x\rVert^2+
\left(
\bar{r}\lVert x\rVert+\bar{r}\bar{a}\lVert x\rVert^2\right)+
1+\lVert x\rVert= T+ 1+
(1+\bar{r})\lVert x\rVert+
(1+\bar{r}\bar{a})\lVert x\rVert^2
\\
&\leq T+0.5(3+\bar{r}) +
\left[
0.5(1+\bar{r})+(1+\bar{r}\bar{a})\right]\lVert x\rVert^2\leq
\left[T+
0.5(3+\bar{r})+(1+\bar{r}\bar{a})\right](\varphi(x))^{\frac{1}{8}}.\label{a06}
\end{split}\end{align}
Next, 
the triangle inequality, the Cauchy--Schwarz inequality,
\eqref{d02}, and
the fact that
$\forall\,x,y\in\R\colon \lvert x^+-y^+\rvert\leq \lvert x-y\rvert$  show for all
$x=(x_1,x_2,\ldots,x_d), y=(y_1,y_2,\ldots,y_d)\in\R^d$ that
{\allowdisplaybreaks\begin{align}
&\left(\sum_{i=1}^{d}\left\lvert
\left(x_i
\sum_{j=1}^{d} a_{ij}x_j^+\right)
-\left(
y_i
\sum_{j=1}^{d} a_{ij}y_j^+\right)\right\rvert^2\right)^{\frac{1}{2}}\nonumber\\
&=\left(\sum_{i=1}^{d}
  \left\lvert(x_i-y_i)\left(
\sum_{j=1}^{d} a_{ij}x_j^+
\right)
+y_i\sum_{j=1}^{d} a_{ij}(x_j^+-y_j^+)\right\rvert^2\right)^{\frac{1}{2}}\nonumber\\
&\leq 
\left(
\sum_{i=1}^{d}(x_i-y_i)^2\right)^{\frac{1}{2}}
\left(\sup_{i\in\N}\left\lvert
\sum_{j=1}^{d}a_{ij}x_j^+\right\rvert\right)
+
\left(\sum_{i=1}^{d}\lvert y_i\rvert^2\right)^{\frac{1}{2}}
\left(
\sup_{i\in\N}\left\lvert
\sum_{j=1}^{d} a_{ij}(x_j^+-y_j^+)\right\rvert^2\right)^{\frac{1}{2}}\nonumber
\\
&\leq \lVert x-y\rVert  
\left(
\sup_{i\in \N} \sum_{j\in\N} \lvert a_{ij}\rvert^2\right)^{\frac{1}{2}}
\left(\sum_{j=1}^{d}\lvert x_j^+\rvert^2\right)^{\frac{1}{2}} + \lVert y\rVert  \left(
\sup_{i\in \N} \sum_{j\in\N} \lvert a_{ij}\rvert^2\right)^{\frac{1}{2}}
\left(\sum_{j=1}^{d}\lvert x_j^+-y_j^+\rvert^2\right)^{\frac{1}{2}}\nonumber \\
&\leq\xeqref{d02} \lVert x-y\rVert \bigl(\bar{a}\lVert x\rVert+\bar{a}\lVert y\rVert\bigr).
\label{l03}
\end{align}}%
Furthermore, \eqref{d01} shows for all
$x=(x_1,x_2,\ldots,x_d), y=(y_1,y_2,\ldots,y_d)\in\R^d$ that
\begin{align}\begin{split}
&
\mu_i(x)-\mu_i(y)=
r_i x_i \left(1- \sum_{j=1}^{d}a_{ij}x_j^+ \right)-
r_i y_i \left(1- \sum_{j=1}^{d}a_{ij}y_j^+ \right)\\
&= r_i(x_i-y_i)-r_i
\left[\left(x_i
\sum_{j=1}^{d} a_{ij}x_j^+\right)
-\left(
y_i
\sum_{j=1}^{d} a_{ij}y_j^+\right)
\right].\end{split}\label{d05}
\end{align}
This,  the triangle inequality, \eqref{d02},  \eqref{l03},  
the fact that $\forall\, x,y\in\R\colon xy\leq 0.5x^2+0.5y^2 $,
and \eqref{d04}
 show for all
$x,y\in\R^d $ that
\begin{align}\begin{split}
&\lVert\mu(x)-\mu(y)\rVert\leq \xeqref{d02}\bar{r}\lVert x-y\rVert
+\xeqref{l03}\bar{r}\lVert x-y\rVert (\bar{a}\lVert x\rVert+\bar{a}\lVert y\rVert)\\
&
= 
\bar{r}\lVert x-y\rVert
(1+\bar{a}\lVert x\rVert+\bar{a}\lVert y\rVert)\leq 
\bar{r}\lVert x-y\rVert
\left(1+0.5\bar{a}+0.5\bar{a}\lVert x\rVert^2+0.5\bar{a}+0.5\bar{a}\lVert y\rVert^2\right)\\
&\leq \bar{r}(1+0.5\bar{a})\lVert x-y\rVert
\left[ 1+\lVert x\rVert^2+1+\lVert y\rVert^2\right]\\
&
\leq \bar{r}(1+0.5\bar{a})\lVert x-y\rVert\xeqref{d04}\left[ (\varphi(x))^{\frac{1}{8}} + (\varphi(y))^{\frac{1}{8}}\right].\end{split}\label{d07}
\end{align}
This, \eqref{d05}, the Cauchy--Schwarz inequality, and \eqref{l03} show for all
$x,y\in\R^d$ that
\begin{align}
&
\langle
x-y,\mu(x)-\mu(y)
\rangle=\xeqref{d05}\sum_{i=1}^{d}\left[
r_i(x_i-y_i)^2-r_i(x_i-y_i) 
\left[\left(x_i
\sum_{j=1}^{d} a_{ij}x_j^+\right)
-\left(
y_i
\sum_{j=1}^{d} a_{ij}y_j^+\right)
\right]\right]\nonumber
\\
&
\leq \bar{r}\lVert x-y\rVert^2
+\bar{r}\lVert x-y\rVert\cdot \lVert x-y\rVert (\bar{a}\lVert x\rVert+\bar{a}\lVert y\rVert)\nonumber\\
&\leq \bar{r} (1+\bar{a}\lVert x\rVert+\bar{a}\lVert y\rVert)\lVert x-y\rVert^2.
\end{align}
This, the fact that
$\forall\,x,y\in\R\colon xy\leq \frac{1}{2}8 T e^{\alpha T}x^2+\frac{1}{2}\frac{y^2}{8 T e^{\alpha T}}$,
and \eqref{d04} show for all $x,y\in\R^d$ that
\begin{align}\begin{split}
&
\langle
x-y,\mu(x)-\mu(y)
\rangle
+3\lVert \sigma(x)-\sigma(y)\rVert^2\leq
\Bigl[3+
\bar{r} +\bar{r}\bar{a}\lVert x\rVert+\bar{r}\bar{a}\lVert y\rVert
\Bigr]
\lVert x-y\rVert^2\\
&\leq \left[
3+\bar{r} +
\frac{1}{2}8 T
e^{\alpha T}\bar{r}^2\bar{a}^2+
 \frac{1}{2}\frac{\lVert x\rVert^2}{8 Te^{\alpha T}}+
\frac{1}{2}8 T
e^{\alpha T}\bar{r}^2\bar{a}^2+
 \frac{1}{2}\frac{\lVert y\rVert^2}{8 Te^{\alpha T}}
\right]\lVert x-y\rVert^2\\
&=
\left[
\frac{1}{2}\left(8 T
e^{\alpha T}\bar{r}^2\bar{a}^2+3+\bar{r}\right)+
 \frac{1}{2}\frac{\lVert x\rVert^2}{8 Te^{\alpha T}}+
\frac{1}{2}\left(8 T
e^{\alpha T}\bar{r}^2\bar{a}^2+3+\bar{r}\right)+
 \frac{1}{2}\frac{\lVert y\rVert^2}{8 Te^{\alpha T}}
\right]\lVert x-y\rVert^2\\
&=\xeqref{d04} \frac{U(x)+U(y)}{16T}\lVert x-y\rVert^2.\end{split}\label{a05}
\end{align}
Next, \eqref{d04} shows for all
$x,y,z\in\R^d$
 that 
\begin{align}\label{a07}
(\nabla U) (x) = 2e^{-\alpha T } x\quad\text{and}\quad ((\totalD^2 U)(x))(y,z)= 2e^{-\alpha T }\langle y,z\rangle.
\end{align}
This, \eqref{d01}, \eqref{d06}, \eqref{d02}, and \eqref{d04} show for all $x\in\R^d$ that
\begin{align}
\begin{split}
&
(\totalD \funcUo (x))(\mu (x))+\frac{1}{2} \sum_{k=1}^{d} (\totalD^2\funcUo (x))(\sigma_{k}(x),\sigma_{k}(x))+
\frac{1}{2}e^{\alpha T }\left\|\sigma (x)^*(\nabla \funcUo) (x)\right\|^2+\funcUi (x)\\
&\leq \xeqref{a07}\xeqref{d01}
\langle
2e^{-\alpha T}x,\mu(x)\rangle +\frac{1}{2}\sum_{k=1}^{d} 2e^{-\alpha T} \lVert\sigma_k (x)\rVert^2
+\frac{1}{2}e^{\alpha T }4e^{-2\alpha T}\lVert x\rVert^2
\\
&\leq \xeqref{d06}2e^{-\alpha T}\bar{r}\lVert x\rVert^2
+e^{-\alpha T} \lVert x\rVert^2 +2e^{-\alpha T}\lVert x\rVert^2= e^{-\alpha T}(2\bar{r}+3)\lVert x\rVert^2\leq\xeqref{d02}\xeqref{d01} \alpha  U(x).
\end{split}
\end{align}
Combining this,
\eqref{a01},
\eqref{a02},
\eqref{a03}, \eqref{a04}, \eqref{a06}, \eqref{d07}, \eqref{a05}, and \eqref{a07}
yields that the conditions \eqref{v05}--\eqref{eq:exponential.integrability} are satisfied 
(with $p\gets 16$, $\beta\gets 2$, $L\gets 1$, 
$\gamma\gets 1$,
and suitably large enough $b,c$ which do not depend on $d$).

\end{example}

\subsubsection*{Acknowledgements}
 This work has been
funded by the Deutsche Forschungsgemeinschaft (DFG, German Research Foundation) through
the research grant HU1889/6-2.

{\small
\bibliographystyle{acm}
%\bibliography{../Bib/bibfile}
\bibliography{bibfile-Xnonglob25}
%\bibliography{bibfile}
}

\end{document}